
\documentclass[12pt]{amsart}
\usepackage{amssymb,amscd}
\usepackage{verbatim}

\usepackage{amsmath,amssymb,graphicx,mathrsfs}   
\usepackage[colorlinks=true,allcolors = blue]{hyperref} 

\usepackage{tikz}
\usetikzlibrary{matrix}

\usepackage[all]{xy}

\textwidth 6.5truein
\textheight 8.67truein
\oddsidemargin 0truein
\evensidemargin 0truein
\topmargin 0truein

\let\frak\mathfrak

\def\>{\relax\ifmmode\mskip.666667\thinmuskip\relax\else\kern.111111em\fi}
\def\<{\relax\ifmmode\mskip-.333333\thinmuskip\relax\else\kern-.0555556em\fi}
\def\vsk#1>{\vskip#1\baselineskip}
\def\vv#1>{\vadjust{\vsk#1>}\ignorespaces}
\def\vvn#1>{\vadjust{\nobreak\vsk#1>\nobreak}\ignorespaces}

  \let\ssize\scriptstyle
\let\sssize\scriptscriptstyle

\let\Medskip\medskip
\def\medskip{\par\Medskip}
\let\Bigskip\bigskip
\def\bigskip{\par\Bigskip}

\let\Maketitle\maketitle
\def\maketitle{\Maketitle\thispagestyle{empty}\let\maketitle\empty}

\newtheorem{thm}{Theorem}[section]
\newtheorem{cor}[thm]{Corollary}
\newtheorem{lem}[thm]{Lemma}

\newtheorem{exmp}[thm]{Example}

\theoremstyle{definition}                                  
\numberwithin{equation}{section}

\theoremstyle{definition}
\newtheorem*{rem}{Remark}

\let\mc\mathcal
\let\nc\newcommand

\let\al\alpha

\let\dl\delta

\let\la\lambda

\let\phi\varphi
\let\si\sigma

\let\Om\Omega

\let\der\partial

\let\ox\otimes

\let\geq\geqslant

\let\leq\leqslant

\let\on\operatorname
\let\bi\bibitem
\let\bs\boldsymbol

\def\C{{\mathbb C}}
\def\Z{{\mathbb Z}}
\def\R{{\mathbb R}}

\def\F{{\mathbb F}}   

\def\+#1{^{\{#1\}}}

\def\id{{\on{id}}}

\def\beq{\begin{equation}}
\def\eeq{\end{equation}}
\def\be{\begin{equation*}}
\def\ee{\end{equation*}}

\nc{\bea}{\begin{eqnarray*}}
\nc{\eea}{\end{eqnarray*}}
\nc{\bean}{\begin{eqnarray}}
\nc{\eean}{\end{eqnarray}}

\let\ga\gamma
\let\Ga\Gamma

\nc{\Il}{{\mc I_{\bs\la}}}
\nc{\bla}{{\bs\la}}
\nc{\Fla}{\F_\bla}
\nc{\tfl}{{T^*\Fla}}
\nc{\GL}{{GL_n(\C)}}
\nc{\GLC}{{GL_n(\C)\times\C^*}}

\let\sd s 

\def\ddk_#1{\kk_{#1}\<\>\frac\der{\der\<\>\kk_{#1}}}

\def\FFF{\mathbb{F}}

\def\bul{\mathbin{\raise.2ex\hbox{$\sssize\bullet$}}}
\def\intt{\mathchoice
{\mathop{\raise.2ex\rlap{$\,\,\ssize\backslash$}{\intop}}\nolimits}
{\mathop{\raise.3ex\rlap{$\,\sssize\backslash$}{\intop}}\nolimits}
{\mathop{\raise.1ex\rlap{$\sssize\>\backslash$}{\intop}}\nolimits}
{\mathop{\rlap{$\sssize\<\>\backslash$}{\intop}}\nolimits}}

\let\kk q 
\let\cc c

\let\Ko K

\def\GZ/{Gelfand-Zetlin}
\def\KZ/{{\slshape KZ\/}}
\def\qKZ/{{\slshape qKZ\/}}
\def\XXX/{{\slshape XXX\/}}

\nc{\A}{{\mc C}}

\def\Sing{{\on{Sing}}}
\def\sll{{\frak{sl}}}

\def\slt{{\frak{sl}_2}}

\nc{\hsl}{\widehat{{\frak{sl}_2}}}

\nc{\BC}{{ \mathbb C}}
\nc{\lra}{\longrightarrow}
\nc{\CO}{{\mathcal{O}}}
\nc{\BZ}{{ \mathbb Z}}
\nc{\hfn}{\hat{\frak{n}}}

\begin{document}

\hrule width0pt
\vsk->

\title[Determinant  of $\F_p$-hypergeometric solutions]
{Determinant of $\F_p$-hypergeometric solutions
\\
under ample reduction}

\author[Alexander Varchenko]
{ Alexander Varchenko$^{\star}$}

\maketitle

\begin{center}
{\it Department of Mathematics, University
of North Carolina at Chapel Hill\\ Chapel Hill, NC 27599-3250, USA\/}

\vsk.5>
{\it Faculty of Mathematics and Mechanics, Lomonosov Moscow State
University\\ Leninskiye Gory 1, 119991 Moscow GSP-1, Russia\/}

\vsk.5>
 {\it $^{ \star}$ Moscow Center of Fundamental and Applied Mathematics
\\ Leninskiye Gory 1, 119991 Moscow GSP-1, Russia\/}

\end{center}

\vsk>
{\leftskip3pc \rightskip\leftskip \parindent0pt \Small
{\it Key words\/}:  KZ equations,  reduction to
characteristic $p$, $\F_p$-hypergeometric solutions

\vsk.6>
{\it 2010 Mathematics Subject Classification\/}: 13A35 (33C60, 32G20) 
\par}

{\let\thefootnote\relax
\footnotetext{\vsk-.8>\noindent
$^\star\<$
{\it E\>-mail}:
anv@email.unc.edu\,, supported in part by NSF grant DMS-1954266}}

\begin{abstract}
We consider the KZ differential equations over $\C$
in the case, when the hypergeometric solutions are 
one-dimensional integrals. We also consider
the same differential equations over a finite field $\F_p$.
We study  the  polynomial solutions of these differential equations over $\F_p$, 
constructed in a previous work joint with 
V.\,Schechtman  and called  the $\F_p$-hypergeometric solutions.

The dimension of the 
space of  $\F_p$-hypergeometric solutions depends on the prime number $p$.
We say that the KZ equations have ample reduction for a prime $p$, if the
dimension of  the space of  $\F_p$-hypergeometric solutions is maximal possible,
that is, equal to  the dimension
of the space of solutions of the corresponding KZ equations over $\C$. Under the assumption of  ample reduction,
we prove a determinant formula for the matrix of coordinates of basis   $\F_p$-hypergeometric solutions.
The formula is  analogous to the corresponding formula for the determinant of the matrix of
coordinates of basis
complex hypergeometric solutions,
in which binomials $(z_i-z_j)^{M_i+M_j}$
are  replaced with $(z_i-z_j)^{M_i+M_j-p}$ and the Euler gamma function $\Ga(x)$ is replaced with
a suitable $\F_p$-analog $\Ga_{\F_p}(x)$ defined on $\F_p$.

\end{abstract}

{\small\tableofcontents\par}

\setcounter{footnote}{0}
\renewcommand{\thefootnote}{\arabic{footnote}}

\section{Introduction}

The KZ equations were introduced  in  \cite{KZ} as  the differential equations satisfied by 
conformal blocks on sphere in the Wess-Zumino-Witten model of conformal field theory.
The hypergeometric solutions of the KZ equations were constructed more than  30 years ago,
see \cite{SV1, SV2}. 
The KZ equations and the hypergeometric solutions 
are related to many subjects in algebra, representation theory,
theory of integrable systems, enumerative geometry, to name a few.

 The polynomial
solutions of the KZ equations over the finite field $\F_p$  
with a prime number $p$ of elements were constructed  recently 
 in \cite{SV3}, see also \cite{V4, V5, V6, V7}. 
   We call these  solutions  the {\it $\F_p$-hypergeometric solutions}.
The general problem is to understand relations between the hypergeometric solutions 
of the KZ equations over $\C$ and the $\F_p$-hypergeometric solutions and observe how the
remarkable properties of hypergeometric solutions are reflected in the properties
of the $\F_p$-hypergeometric solutions. This program is in the first stages, where
we consider  essential  examples and study the corresponding 
 $\F_p$-hypergeometric solutions by direct methods.

\vsk.2>
In this paper we consider the KZ differential equations 
in the case, when the hypergeometric solutions over $\C$ are 
one-dimensional integrals.

\vsk.2>

The dimension of the 
space of  $\F_p$-hypergeometric solutions depends on the prime number $p$.
We say that the KZ equations have ample reduction for a prime $p$, if the
dimension of  the space of  $\F_p$-hypergeometric solutions is maximal possible,
that is, equal to the dimension
of the space of solutions
 of the corresponding KZ equations over $\C$. Under the assumption of  ample reduction,
we prove a determinant formula for the matrix of coordinates of basis   $\F_p$-hypergeometric solutions.
The formula is  analogous to the corresponding formula for the determinant of the matrix of
coordinates of basis
complex hypergeometric solutions, see \cite{V1},
in which binomials $(z_i-z_j)^{M_i+M_j}$
are  replaced with $(z_i-z_j)^{M_i+M_j-p}$ and the Euler gamma function $\Ga(x)$ is replaced with
a suitable $\F_p$-analog $\Ga_{\F_p}(x)$ defined on $\F_p$.

\vsk.4>

In Section \ref{sec KZ} we describe our KZ equations and their reduction modulo $p$.  We define the hypergeometric solutions over $\C$ and $\F_p$-hypergeometric solutions.  
The ample reduction is defined in  Section \ref{sec am red}. 

As  mentioned earlier, the  $\F_p$-hypergeometric solutions are polynomials. In Section \ref{sec coeff} we give a formula
for their coefficients. 

In Section \ref{sec beta}, we consider the particular case of our KZ  equations, 
whose space of solutions over $\C$ is one-dimensional, with the basis solution 
given by the Euler beta integral. We describe the corresponding $\F_p$-hypergeometric solution,
 which we call the {\it $\F_p$-beta integral}.

In Section \ref{sec lea} we consider an arbitrary polynomial solution (not necessarily  $\F_p$-hyper\-geometric) 
of our KZ 
 equations over $\F_p$ and describe 
its leading term with respect to the lexicographical ordering of monomials, see Theorem \ref{lem C0}.
It turns out that the notion of leading term and the formula for the leading term
in Theorem \ref{lem C0} 
 are useful in studying polynomial 
solutions of the KZ  equations over $\F_p$. The notion of  leading term replaces, to some extend,
 the notion of  initial condition, when  the differential equations are over $\F_p$.

The module of $\F_p$-hypergeometric solutions has a natural basis. In Section \ref{sec le co} we describe the leading 
terms of the basis  $\F_p$-hypergeometric solutions.  Section \ref{sec det} contains our main result,
Theorem \ref{thm det}, describing the determinant of coordinates of the basis $\F_p$-hypergeometric solutions,
under assumption of ample reduction.

In Section \ref{ex1} we give an example of KZ  equations and a prime $p$,
such that the space of complex solutions is one-dimensional, the space of polynomial solutions over $\F_p$ is 
one-dimensional,
and the KZ equations  have no $\F_p$-hypergeometric solutions.
In Section \ref{sec last} we show that if the reduction of our KZ  equations is ample for a prime $p$, then all polynomial solutions are $\F_p$-hypergeometric.
\vsk.2>
The author thanks Alexey Slinkin for useful discussions.

\section{KZ equations}
\label{sec KZ}

\subsection{Description of equations} 
\label{sec DE} 

In this paper the numbers $p$, $q$ {\it are prime numbers,
$n$ is a positive integer, $p>n\geq 2$, \,$p>q$.}
We fix {\it a vector $(m_1,\dots, m_n) \in \Z^n_{>0}$, such that 
$m_i<q$ for all $i=1,\dots,n$, }
and study the system of equations
for a  column vector  $I(z)=(I_1(z)$, \dots, $I_{n}(z))$\,:
\bean
\label{KZ}
\phantom{aaa}
 \frac{\partial I}{\partial z_i} \ = \
   {\frac 1q} \sum_{j \ne i}
   \frac{\Omega_{ij}}{z_i - z_j}  I ,
\quad i = 1, \dots , n,
\qquad
m_1I_1(z)+\dots+m_nI_{n}(z)=0,
\eean
where $z=(z_1,\dots,z_n)$,
the $n\times n$-matrices $\Om_{ij}$ have the form:
\bean
\label{Om_ij_reduced}
 \Omega_{ij} \ = \ \begin{pmatrix}
             & \vdots^i &  & \vdots^j &  \\
        {\scriptstyle i} \cdots & {-m_j} & \cdots &
            m_j   & \cdots \\
                   & \vdots &  & \vdots &   \\
        {\scriptstyle j} \cdots & m_i & \cdots & {-m_i}&
                 \cdots \\
                   & \vdots &  & \vdots &
                   \end{pmatrix} ,
\eean                    
and all other entries are zero.
 This  joint system of {\it differential and 
algebraic equations} is called the {\it system of KZ  equations} in this paper.

\vsk.2>
\begin{rem}

System of equations \eqref{KZ} is the system of  standard KZ differential equations with parameter $q$, 
associated with the Lie algebra $\sll_2$ and the subspace of singular 
vectors of weight $\sum_{i=1}^nm_i-2$ of the tensor 
product
$V_{m_1}\ox\dots\ox V_{m_n}$, 
where $V_{m_i}$ is the irreducible $m_i+1$ dimensional  $\sll_2$-module, up to a gauge transformation, see 
this example in  \cite[Section 1.1]{V3}.

\end{rem}

\vsk.2>
We consider system \eqref{KZ} over the field $\C$ and over the field $\F_p$ with $p$ elements.

\subsection{Solutions  over $\C$} 
Consider the {\it master function}
\bean
\label{mast f}
\Phi(t,z_1,\dots,z_{n}) = \prod_{a=1}^{n}(t-z_a)^{-m_a/q}
\eean
and  the column ${n}$-vector  of hypergeometric  integrals
\bean
\label{Iga}
I^{(\ga)} (z)=(I_1(z),\dots,I_n(z)),
\eean
 where
\bean
\label{s}
I_j=\int  \Phi(t,z_1,\dots,z_{n})\, \frac {dt}{t-z_j},\qquad j=1,\dots,{n}\,.
\eean
The integrals $I_j$, $j=1,\dots,n$, are over an element $\ga$ of the first homology group
 of the algebraic curve with affine equation
\bea
y^q = (t-z_1)^{m_1}\dots (t-z_{n})^{m_n}\,.
\eea
Starting from such $\ga$,  chosen for given values
$\{z_1,\dots,z_{n}\}$, the vector $I^{(\ga)}(z)$ can 
be analytically continued as a multivalued holomorphic function of $z$ to the complement in $\C^n$ of the union of the
diagonal  hyperplanes $z_i=z_j$, $i\ne j$.

\vsk.2>
The complex vector space of such integral solutions is the  $n-1$-dimensional vector space of all solutions of
system \eqref{KZ}. 
See these statements in the example in   \cite[Section 1.1]{V3}, also in  \cite{SliV1}, see also the determinant formula
\eqref{det C} below.

\subsection{$\F_p$-Integrals}

 Let $P(x_1,\dots,x_k)$ be a polynomial
with coefficients in an $\F_p$-module,
\bea
P(x_1,\dots,x_k) = \sum_{d_1,\dots,d_k}\, c_{d_1,\dots,d_k}\, x_1^{d_1}\dots x_k^{d_k}.
\eea
Let $(l_1,\dots,l_k)\in \Z_{>0}^k$. The coefficient
$c_{l_1p-1,\dots,l_kp-1}$ is called the {\it $\F_p$-integral over cycle $[l_1,\dots,l_k]_p$}
and denoted by 
\bea
\int_{[l_1,\dots,l_k]_p} P(x_1,\dots,x_k)\,dx_1\dots dx_k.
\eea
We have an analog of Stokes' Theorem:
\bea
\int_{[l_1,\dots,l_k]_p} \frac{\der P}{\der x_i}(x_1,\dots,x_k)\,dx_1\dots dx_k \,=\, 0\,
\eea
for any $[l_1,\dots,l_k]_p$.

\subsection{Solutions  over $\F_p$}

Polynomial solutions of system \eqref{KZ}, considered over the field $\F_p$, 
were constructed in \cite{SV3}.

For $i=1,\dots,n$, choose  the least positive integers $M_i$ such that 
\bean
\label{M_i}
M_i\equiv -\frac{m_i}{q}
\qquad 
(\on{mod} \,p)\,.
\eean
Let
\bean
\label{mp red}
\Phi_{p}(t,z, M) 
&:=&
 \prod_{i=1}^n(t-z_i)^{M_i},
\\
\label{P}
P(x,z) 
&:=&
\Big(\frac {\Phi_p(x,z)}{x-z_1}, \dots,\frac {\Phi_p(x,z)}{x-z_n}\Big)\,=\, \sum_i P^{i}(z) \,x^i \,,
\eean
where $P(x,z)$ is considered as a column $n$-vector of polynomials in $x,z_1,\dots,z_n$ and
$P^{i}(z)$ as column $n$-vectors of polynomials in $z_1,\dots,z_n$ with coefficients in $\FFF_p$.

\vsk.2>

For a positive integer $l$, denote
\bean
\label{md}
I^{[l]}(z)\,:=\, \int_{[l]_p}\Big(\frac {\Phi_p(x,z)}{x-z_1}, \dots,\frac {\Phi_p(x,z)}{x-z_n}\Big)\,dx.
\eean

\begin{thm}[{\cite[Theorem 1.2]{SV3}}] \label{thm Fp} 
For any positive integer $l$, the vector of polynomials $I^{[l]}(z)$
is a solution of KZ system \eqref{KZ}.

\end{thm}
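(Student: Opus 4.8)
The plan is to verify directly that each component equation of the KZ system \eqref{KZ} is satisfied by $I^{[l]}(z)$, working at the level of the generating polynomial vector $P(x,z)$ before extracting the $\F_p$-integral. First I would observe that the $\F_p$-integral $\int_{[l]_p}$ is $\F_p$-linear and, by the analog of Stokes' Theorem stated in the excerpt, kills $x$-derivatives; so it suffices to establish a pair of polynomial identities relating $\der_{z_i} P(x,z)$ and the $\Omega_{ij}$-action to an exact form $\der_x(\text{something})$, together with the purely algebraic linear relation $\sum_i m_i P^i = 0$ (this last one I would check by noting $\sum_i m_i \frac{\Phi_p(x,z)}{x-z_i}$ has no $x^i$-coefficient structure of the required type — actually by recognizing it as $-q\,\der_x\Phi_p/\dots$ up to the congruence $qM_i\equiv -m_i$, hence exact, hence its $\F_p$-integral vanishes componentwise; wait, more carefully: $\der_x \Phi_p = \sum_i M_i \frac{\Phi_p}{x-z_i}$, and $qM_i \equiv -m_i \pmod p$, so $\sum_i m_i \frac{\Phi_p}{x-z_i} \equiv -q\,\der_x\Phi_p$, which is exact, so the algebraic equation holds after applying $\int_{[l]_p}$).

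Next, for the differential equations, the key computation is the compatibility identity at the generating-function level. I would compute
\[
\der_{z_i}\!\left(\frac{\Phi_p(x,z)}{x-z_j}\right)
\]
for each pair, using $\der_{z_i}\Phi_p = M_i\,\frac{\Phi_p}{x-z_i}$ (valid since $M_i$ is an ordinary positive integer exponent, so this is honest polynomial differentiation), and similarly $\der_{z_i}\frac{1}{x-z_j} = \delta_{ij}\frac{1}{(x-z_i)^2}$. Assembling the $n$-vector $\der_{z_i}P(x,z)$, I expect to match it — modulo an exact $x$-derivative $\der_x(\text{rational-but-actually-polynomial expression})$ — against $\frac1q\sum_{j\ne i}\frac{\Omega_{ij}}{z_i-z_j}P(x,z)$, after using the congruence $M_i \equiv -m_i/q$ to trade the integer exponents $M_i$ for the fractions $-m_i/q$ that appear in $\Omega_{ij}$. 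This is precisely the $\F_p$-shadow of the classical verification that the complex integrals \eqref{s} solve the KZ system: there one integrates by parts against $d/dt$; here one uses Stokes for $\int_{[l]_p}$ in place of genuine integration by parts. Concretely I would write $\frac{1}{(x-z_i)(x-z_j)} = \frac{1}{z_i-z_j}\left(\frac{1}{x-z_i}-\frac{1}{x-z_j}\right)$ to split the double poles and recognize the $\Omega_{ij}$ pattern.

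The main obstacle I anticipate is bookkeeping: making sure that every expression that must be "exact" is genuinely of the form $\der_x(\text{polynomial in }x,z)$ so that Stokes applies, rather than only a derivative of a rational function (the $\frac{1}{(x-z_i)^2}$ terms are the delicate ones). I would handle this by clearing denominators — multiplying through by appropriate $(x-z_j)$ factors so that $\Phi_p(x,z)/(x-z_j)$, which is already a polynomial since $M_j \geq 1$, stays polynomial throughout, and the auxiliary "primitive" whose $x$-derivative appears is likewise $\Phi_p$ divided by a single linear factor, hence polynomial. A secondary subtlety is that the arithmetic identities ($M_i \equiv -m_i/q$, $qM_i \equiv -m_i$) hold only modulo $p$, so every manipulation must take place over $\F_p$ from the outset; since $p > n \geq 2$ and $p > q$, the denominators $q$ and the nonzero integers $z_i - z_j$ (as formal variables) cause no division-by-zero issues in $\F_p[z]$. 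Once the two generating-function identities are in place, applying $\int_{[l]_p}$ (which exists for every positive integer $l$) and invoking $\F_p$-linearity and Stokes finishes the proof; this also transparently explains why the construction produces one solution for each $l$, matching the statement.
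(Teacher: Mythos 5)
Your proposal is correct and is essentially the standard argument; note that the paper itself gives no proof of this statement but simply cites \cite[Theorem 1.2]{SV3}, and the verification you outline is precisely how that reference proceeds. Concretely, your two generating-function identities do close up as you hope: for $j\ne i$ one finds $\der_{z_i}\bigl(\Phi_p/(x-z_j)\bigr)=-M_i\,\Phi_p/\bigl((x-z_i)(x-z_j)\bigr)$, which after the partial-fraction split and the congruence $M_i\equiv -m_i/q$ matches the $j$-th component of $\tfrac1q\sum_{k\ne i}\Omega_{ik}P/(z_i-z_k)$ exactly, with no exact term needed; for $j=i$ the discrepancy between the two sides is exactly $-\der_x\bigl(\Phi_p/(x-z_i)\bigr)$, and $\Phi_p/(x-z_i)$ is an honest polynomial because $M_i\ge 1$, so the Stokes analog kills it. Your treatment of the algebraic equation via $\sum_i m_i\Phi_p/(x-z_i)\equiv -q\,\der_x\Phi_p$ is likewise correct. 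The only slip is the sign in your intermediate formula $\der_{z_i}\Phi_p=M_i\,\Phi_p/(x-z_i)$ (it should be $-M_i\,\Phi_p/(x-z_i)$; the positive sign belongs to $\der_x\Phi_p$), but this does not affect the structure of the argument, and your worry about the $1/(x-z_i)^2$ terms resolves itself since they occur only with the coefficient $M_i-1$, which vanishes exactly when $(x-z_i)^{M_i-2}$ would fail to be polynomial.
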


The solutions  $I^{[l]}(z)$
given by this construction are called the {\it $\F_p$-hypergeometric solutions} of equations \eqref{KZ}.

\begin{rem}
The polynomial $\Phi_p(t,z)$ is an $\F_p$-analog of the master function $\Phi(t,z)$.
The polynomial $P(t,z)$ is  an analog of the integrand of  integral \eqref{Iga}.
The transformation  $P(t,z)\to I^{[l]}(z)$ is  an analog of the integral
and the index $[l]_p$ is an analog of the integration cycle.

\end{rem}

\vsk.2>
Denote $\F_p[z^p]:=\F_p[z_1^p,\dots,z_{n}^p]$. 
The set of all polynomial solutions of system \eqref{KZ} with coefficients in $\F_p$
is a module over the ring
$\F_p[z^p]$ since equations \eqref{KZ} are linear and
$\frac{\der z_i^p}{\der z_j} =0$ in $\F_p[z]$ for all $i,j$.
\vsk.2>

The $\F_p[z^p]$-module
\bean
\label{Def M_M}
\mathcal{M}\,=\,\Big\{ \sum_{l} c_l(z) I^{[l]}(z) \ |\ c_l(z)\in\F_p[z^p]\Big\},
\eean
spanned by $\F_p$-hypergeometric solutions,
 is called the {\it module of $\F_p$-hypergeometric solutions}.
\vsk.2>

The range for the index $l$ is defined by  the inequalities 
 $0< lp-1\leq \sum_{i=1}^nM_i-1$. Hence
$l=1,\dots, r$, where
\bean
\label{def dL}
r \,:=\,\Big[ \sum_{i=1}^n M_i / p \Big],
\eean
the integer part of the number $ \sum_{i=1}^n M_i / p$.

\begin{thm}
\label{thm li in}
The $\F_p$-hypergeometric solutions  $I^{[l]}(z)$, $l=1,\dots,r$, are linearly independent over $\F_p[z^p]$.
\end{thm}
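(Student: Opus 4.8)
The plan is to prove linear independence by looking at leading terms with respect to the lexicographical order on monomials in $z_1,\dots,z_n$. By Theorem \ref{lem C0} (referenced in the excerpt as describing the leading term of an arbitrary polynomial solution over $\F_p$), each $\F_p$-hypergeometric solution $I^{[l]}(z)$ has a well-defined leading monomial, and Section \ref{sec le co} is advertised as computing these leading terms explicitly. So the first step is to invoke that computation: write down the leading term $\mathrm{lt}(I^{[l]})$ for each $l=1,\dots,r$, keeping track both of the monomial in $z$ and of which coordinate(s) of the $n$-vector $I^{[l]}$ carry the nonzero leading coefficient.

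The second step is the key combinatorial observation: the leading monomials of $I^{[1]},\dots,I^{[r]}$ are pairwise distinct, and moreover distinct in a way that survives multiplication by elements of $\F_p[z^p]$. Concretely, since $c_l(z)\in\F_p[z^p]$ has all exponents divisible by $p$, multiplying $I^{[l]}$ by a monomial from $\F_p[z^p]$ shifts its leading monomial by a vector in $(p\Z)^n$. Thus it suffices to check that the leading monomials of the $I^{[l]}$ lie in distinct cosets of $(p\Z)^n$ in $\Z^n$, OR — more likely what actually happens here — that they differ in a single coordinate by amounts strictly less than $p$, so that no $\F_p[z^p]$-combination can make two leading terms coincide or cancel. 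I expect the leading term of $I^{[l]}$ to involve an exponent like $lp-1$ in one distinguished slot (coming from the $\F_p$-integral picking out the coefficient of $x^{lp-1}$ in $\Phi_p(x,z)/(x-z_i)$), and as $l$ ranges over $1,\dots,r$ these exponents $p-1, 2p-1, \dots, rp-1$ are mutually incongruent mod $p$ only trivially, so the real content is that the $z$-degree in that slot distinguishes them up to the $(p\Z)^n$-ambiguity.

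The third step assembles the argument: suppose $\sum_{l=1}^r c_l(z) I^{[l]}(z)=0$ with $c_l\in\F_p[z^p]$ not all zero. Among the $l$ with $c_l\ne 0$, pick the pair $(l,\text{monomial})$ whose product $\mathrm{lt}(c_l)\cdot\mathrm{lt}(I^{[l]})$ is lexicographically largest; by the coset/degree-gap analysis of step two this maximal monomial is achieved by exactly one term in the sum and in exactly one coordinate of the $n$-vector, hence cannot be cancelled, contradicting that the sum is zero. Therefore all $c_l=0$.

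The main obstacle will be step two: verifying that the explicit leading monomials from Section \ref{sec le co} genuinely separate the $I^{[l]}$ modulo the $(p\Z)^n$-shifts coming from the coefficient ring $\F_p[z^p]$. This requires a careful reading of the leading-term formula — in particular confirming that the distinguishing exponent (the one equal to $lp-1$, or whatever combination of the $M_i$ appears) cannot be matched across different $l$ after adding multiples of $p$, which ultimately rests on the constraint $0<lp-1\le\sum_i M_i-1$ together with $p>n$. A secondary point to be careful about is that $I^{[l]}$ is vector-valued: one must make sure the leading coefficient is nonzero in a coordinate, and ideally that the coordinate is also controlled, so that the non-cancellation is visible coordinatewise rather than having to argue about vector linear combinations; I would handle this by fixing, for each $l$, one coordinate in which $\mathrm{lt}(I^{[l]})$ is nonzero and running the entire comparison inside that coordinate.
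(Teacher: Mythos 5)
Your proposal is correct in outline, and the key step you flag does go through; it is essentially the leading-term argument the paper gives as its in-text alternative proof in Section \ref{sec cor} (the official proof is by citation to \cite{V5, SliV1}), but your discriminating device differs from the paper's. By Theorem \ref{thm tle}, the $\id$-leading monomial of $I^{[l]}(z)$ is $z_1^{M_1}\cdots z_{i-1}^{M_{i-1}}z_i^{\,\sum_{j\ge i}M_j-lp}$ with $i=i(l)$ defined by \eqref{i(l)} --- not an exponent $lp-1$ as you guessed, though you hedged on this. Your coset claim is then verified as follows: for $l<l'$ one has $i(l')<i(l)$ by Lemma \ref{lem length}, and in coordinate $i'=i(l')$ the two exponents are $M_{i'}$ and $\sum_{j\ge i'}M_j-l'p$, whose difference $\sum_{j>i'}M_j-l'p$ lies in $[-M_{i'},0)\subset(-p,0)$ by \eqref{i(l)}, hence is not divisible by $p$; so the leading monomials lie in distinct cosets of $(p\Z)^n$, and your step-three cancellation argument (run in the coordinate $i(l)$, where the leading coefficient is visibly nonzero) works --- note the relevant ingredients are \eqref{i(l)} and Lemma \ref{lem length}, not the constraint $0<lp-1\le\sum_i M_i-1$ with $p>n$ that you anticipated. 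The paper instead observes that the leading coefficient \emph{vectors} of $I^{[1]},\dots,I^{[r]}$ are linearly independent over $\F_p$ (their ``pivot'' coordinates $i(l)$ are distinct), which immediately gives independence over the full rational function field $\F_p(z)$, a stronger conclusion than independence over $\F_p[z^p]$; your route gives only the stated statement and needs the extra mod-$p$ arithmetic on exponents, but it avoids arguing about linear combinations of vector leading coefficients. Both proofs rest on the same leading-term computation, so the difference is in which invariant (monomial coset versus coefficient vector) separates the solutions.
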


\begin{proof} The proof coincides with the proof of \cite[Theorem 3.1]{V5},  see also the proof of 
 {\cite[Theorem 3.2]{SliV1}}. Other proofs see in \cite[Section 4.1]{V6}  and in Section \ref{sec cor} below.
\end{proof}

\begin{lem}
\label{lem am}
We have $r<n$.
\qed

\end{lem}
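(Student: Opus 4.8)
The plan is to bound each $M_i$ from above and then sum. First I would observe that since $0<m_i<q<p$, the integer $m_i$ is not divisible by $p$, and $q$ is invertible modulo $p$ (as $0<q<p$ with $q$ prime); therefore the residue class of $-m_i/q$ modulo $p$ is well defined and nonzero. Hence the least positive integer $M_i$ with $M_i\equiv -m_i/q\pmod p$ lies in the range $1\le M_i\le p-1$.

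Next I would sum these inequalities over $i=1,\dots,n$, obtaining $\sum_{i=1}^n M_i\le n(p-1)=np-n$. Since $n\ge 2$ (indeed $n\ge 1$ suffices), the right-hand side is strictly less than $np$, so $\sum_{i=1}^n M_i/p<n$. Passing to the integer part then gives
\[
r\ =\ \Big[\,\sum_{i=1}^n M_i/p\,\Big]\ \le\ n-1\ <\ n,
\]
which is the assertion.

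I expect no genuine obstacle here; the single point that must not be glossed over is the upper bound $M_i\le p-1$ rather than the weaker $M_i\le p$, because only the strict inequality $\sum_{i=1}^n M_i<np$ forces $r<n$. That strict bound in turn depends on $m_i\not\equiv 0\pmod p$, which is exactly what the standing hypothesis $0<m_i<q<p$ guarantees.
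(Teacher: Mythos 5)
Your proof is correct and is exactly the straightforward bound the paper leaves implicit (the lemma is stated with no written proof): since $0<m_i<q<p$ the residue $-m_i/q$ is nonzero mod $p$, so $1\le M_i\le p-1$, whence $\sum_i M_i\le n(p-1)<np$ and $r=\big[\sum_i M_i/p\big]\le n-1$. You are also right to flag that the strict bound $M_i\le p-1$ (not $M_i\le p$) is the one essential point.
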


\subsection{Ample reduction} 
\label{sec am red}

 We say that system \eqref{KZ} has {\it ample reduction} for a prime $p$ if
\bean
\label{def am}
\Big[ \sum_{i=1}^n M_i / p \Big] = n-1\,,
\eean
that is, the rank $r=\Big[ \sum_{i=1}^n M_i / p \Big]$ of the module 
$\mc M$ of $\F_p$-hypergeometric solutions takes the possible maximum
value $n-1$.

\begin{exmp} 

If $q>n$, $p=lq+q-1$ for some $l \in\Z_{>0}$ and $m_i=1$, $i=1,\dots, n$. Then 
$M_i = ((q-1)p-1)/q=p-l-1$ and
\bea
\Big[ \sum_{i=1}^n \frac{M_i}p \Big] =
\Big[ n \frac{(q-1)p-1}{pq} \Big] =
\Big[ n - \frac nq-\frac n{pq}\Big] = n-1 .
\eea
Hence under these assumptions system \eqref{KZ} has ample reduction.

\end{exmp}

\begin{lem}
\label{lem am ine}
If system \eqref{KZ} has ample reduction for a prime $p$, then  for any $l=1,\dots, n-1$
and any subset $I\subset\{1,\dots,n\}$ with $|I|=l$,  we have
\bean
\label{am l ine}
(l-1) p \,<\,\sum_{i\in I} M_i\, <\, lp\,.
\eean

\end{lem}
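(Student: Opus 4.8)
The plan is to reduce the inequality \eqref{am l ine} to the global identity $\sum_{i=1}^n M_i = (n-1)p + s$ with $0 \le s < p$, which holds under ample reduction since by \eqref{def am} we have $\big[\sum M_i/p\big] = n-1$. Here $s = \sum M_i - (n-1)p$ is the remainder. Then for any subset $I$ with $|I| = l$ I would write, with $J = \{1,\dots,n\}\setminus I$ so that $|J| = n-l$,
\[
\sum_{i\in I} M_i \;=\; \Big(\sum_{i=1}^n M_i\Big) - \sum_{j\in J} M_j \;=\; (n-1)p + s - \sum_{j\in J} M_j .
\]
So the two desired bounds $(l-1)p < \sum_{i\in I} M_i < lp$ are equivalent to $(n-l)p - s \le \sum_{j\in J} M_j$ being... more precisely, the upper bound $\sum_{i\in I}M_i < lp$ becomes $\sum_{j\in J} M_j > (n-l-1)p + s \ge (n-l-1)p$, and the lower bound $\sum_{i\in I} M_i > (l-1)p$ becomes $\sum_{j\in J} M_j < (n-l)p + s$. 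Since $s < p$, the second of these is implied by $\sum_{j\in J} M_j \le (n-l)p$, and the first is implied by $\sum_{j\in J} M_j \ge (n-l-1)p + 1$, i.e. $\sum_{j\in J} M_j > (n-l-1)p$. Thus, writing $l' = n-l$, it suffices to show: for every nonempty proper subset $K \subset \{1,\dots,n\}$ with $|K| = l'$, we have $(l'-1)p < \sum_{i\in K} M_i < l'p$. In other words, the statement is self-dual under complementation, and it is enough to prove one of the two inequalities for all subset sizes; the other follows by passing to the complement.

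So the real content is a single one-sided bound, say $\sum_{i\in I} M_i < lp$ for every $I$ with $|I| = l$, $1 \le l \le n-1$. I would prove this by induction on $l$, or more directly: observe that each $M_i \ge 1$ (they are least positive integers satisfying \eqref{M_i}), and that $M_i < p$ (again because $M_i$ is the least positive residue, and $m_i \not\equiv 0 \pmod q$ forces the residue to be nonzero, while being a residue mod $p$ it lies in $\{1,\dots,p-1\}$ — here one uses $p \ne q$ and $\gcd(m_i,q)$ issues, but since $m_i < q$ and $q$ prime, $m_i/q$ is a well-defined nonzero element of $\F_p$). Hence $\sum_{i\in I} M_i \le l(p-1) < lp$ automatically, for ANY subset, with no ample-reduction hypothesis needed. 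That disposes of the upper bound. By the complementation duality above, the lower bound $(l-1)p < \sum_{i\in I}M_i$ is equivalent to the upper bound $\sum_{j\in J}M_j < (n-l+1)p$ for $|J| = n-l$... wait, let me recount: $\sum_{i\in I}M_i = (n-1)p + s - \sum_{j\in J}M_j > (l-1)p$ iff $\sum_{j\in J}M_j < (n-l)p + s$. Since $|J| = n-l$ and we just showed $\sum_{j\in J}M_j \le (n-l)(p-1) = (n-l)p - (n-l) \le (n-l)p - 1 < (n-l)p \le (n-l)p + s$, we are done. This uses $n - l \ge 1$, i.e. $l \le n-1$, which is exactly the hypothesis.

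Therefore the proof is short: first note $1 \le M_i \le p-1$ for all $i$ (justifying the upper range from the definition of $M_i$ via \eqref{M_i} and the primality of $q$ with $0 < m_i < q$); then for $|I| = l$ with $1 \le l \le n-1$, bound $\sum_{i\in I} M_i \le l(p-1) < lp$ directly, and bound $\sum_{i\in I} M_i = \big(\sum_{k=1}^n M_k\big) - \sum_{j\notin I} M_j = (n-1)p + s - \sum_{j\notin I}M_j$ where $s = \sum M_k - (n-1)p \ge 0$ by ample reduction \eqref{def am}, so $\sum_{i\in I}M_i \ge (n-1)p - \sum_{j\notin I}M_j \ge (n-1)p - (n-l)(p-1) = (l-1)p + (n-l) > (l-1)p$ since $n - l \ge 1$. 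I expect the only delicate point — and the one worth spelling out carefully — is the claim $M_i \le p-1$, i.e. that $m_i/q$ is a nonzero element of $\F_p$; this needs $p \ne q$ (given) so that $q$ is invertible mod $p$, and $m_i \not\equiv 0 \pmod p$, which holds because $0 < m_i < q < p$, so $m_i$ is itself a nonzero residue mod $p$, hence $-m_i/q \bmod p$ is nonzero and its least positive representative is at most $p-1$. Everything else is elementary arithmetic with no hidden obstacle.
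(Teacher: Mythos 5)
Your proof is correct and follows essentially the same route as the paper: the upper bound comes from $0<M_i<p$ (you additionally spell out why $M_i\le p-1$, using $0<m_i<q<p$ and $p\ne q$, which the paper takes for granted), and the lower bound comes from combining the ampleness inequality $\sum_{i=1}^n M_i\ge (n-1)p$ with the bound $\sum_{j\notin I}M_j\le (n-l)(p-1)$ on the complement — the paper states this as a proof by contradiction, yours is the direct contrapositive, but it is the same argument.
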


\begin{proof} 
The second inequality holds since $0<M_i<p$ for any $i$.  Assume that the first inequality is not true and
 $\sum_{i\in I} M_i\leq (l-1)p$ for some $l$ and $I$. Then 
\bea
\sum_{i=1}^n M_i \leq (l-1)p + \sum_{i\in \bar I} M_i < (l-1)p + (n-l)p = (n-1)p ,
\eea
where $\bar I$ is the complement of $I$. That contradicts to the ampleness of the reduction.
\end{proof}

\subsection{Example}
\label{ex1}

 Let $n=2$, $p=3$, $q=2$, $m_1=m_2=1$. Then
$M_1=M_2=1$. The KZ equations take the form
\bea
\frac{\der I}{\der z_1} = \frac{\Om}{z_1-z_2}I, 
\qquad
\frac{\der I}{\der z_2} = \frac{\Om }{z_2-z_1}I, 
\qquad
\Om=\begin{pmatrix}
\,\,\,1 & -1  
\\
-1&\,\,\,\,1     
\end{pmatrix}.
\eea
The polynomial 
$(z_1-z_2)^2\!
\begin{pmatrix}
\,\,\,\,1 
\\
-1     
\end{pmatrix}$ is a solution. At the same time $r=[(M_1+M_2)/p]= 0$ and  there are no $\F_p$-hypergeometric solutions.

\section{Coefficients of polynomials}
\label{sec coeff}

\subsection{Coefficients of $\F_p$-hypergeometric solutions}

For $l=1,\dots,r$, the coordinates of the column vector 
$I^{[l]}(z) =(I^{[l]}_1(z), \dots, I^{[l]}_1(z))$
are homogeneous polynomials in $z_1,\dots,z_n$ of degree 
\bean
\label{deg P}
\delta_{l}:=
\sum_{j=1}^n M_j - lp\,.
\eean
Let
\bea
I^{[l]}(z) = \sum_{d_1,\dots,d_n} I^{[l]}_{d_1,\dots,d_n} z_1^{d_1}\dots z_n^{d_n}\,,
\qquad I^{[l]}_{d_1,\dots,d_n} \in \F_p^n\,.
\eea

\begin{lem}
\label{lem coef} 
The coefficient $I^{[l]}_{d_1,\dots,d_n}\in \F_p^n$ is nonzero if and only if
\bean
\label{cond ne 0}
\sum_{i=1}^n d_i = \delta_{l}, \quad\on{and}\quad d_i\leq M_i \quad\on{for}\quad i=1,\dots,n\,,
\eean
moreover,
\bean
\label{Coe}
I^{[l]}_{d_1,\dots,d_n} \,=\,(-1)^{\dl_l}
\prod_{j=1}^n\binom{M_j}{d_j}\,
\Big(1 - \frac{d_1}{M_1}, \dots , 1 - \frac{d_n}{M_n}\Big).
\eean
If  $(I^{[l]}_{d_1,\dots,d_n; 1}, \dots,  I^{[l]}_{d_1,\dots,d_n;n})$ are coordinates of
$I^{[l]}_{d_1,\dots,d_n}$,   then
\bean
\label{sing M}
\sum_{i=1}^n \,m_i\,  I^{[l]}_{d_1,\dots,d_n; i} \,=\,\sum_{i=1}^n \,M_i\,  
I^{[l]}_{d_1,\dots,d_n; i} \,=\,0\,.
\eean

\end{lem}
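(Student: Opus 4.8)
The plan is to extract the coefficients of $I^{[l]}(z)$ directly from the definition \eqref{md} as the $\F_p$-integral of the vector $P(x,z)$, i.e.\ as the coefficient of $x^{lp-1}$ in each component $\Phi_p(x,z)/(x-z_j)$. First I would expand $\Phi_p(x,z) = \prod_{i=1}^n (x-z_i)^{M_i}$ by the multinomial theorem; the coefficient of $x^{lp-1}$ in $\Phi_p(x,z)/(x-z_j)$ will be a sum over multi-indices, but the key observation is that $\Phi_p(x,z)$ is homogeneous of degree $\sum_i M_i$ in $(x,z_1,\dots,z_n)$ jointly, so picking out the power $x^{lp-1}$ forces the remaining $z$-monomial to have total degree $\sum_i M_i - (lp-1)$, and after the final $dx$ this accounts for degree $\delta_l = \sum_j M_j - lp$ in the $z_i$'s — this gives the first condition in \eqref{cond ne 0}. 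The constraint $d_i \le M_i$ is immediate because $(x-z_i)^{M_i}$ contributes at most $z_i^{M_i}$.

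The heart of the computation is the explicit formula \eqref{Coe}. Here I would write $\Phi_p(x,z)/(x-z_j) = \prod_{i\ne j}(x-z_i)^{M_i} \cdot (x-z_j)^{M_j-1}$, expand each factor, and collect the coefficient of $x^{lp-1} z_1^{d_1}\cdots z_n^{d_n}$. For the factors $i \ne j$ we take $z_i^{d_i}$ with sign $(-1)^{d_i}\binom{M_i}{d_i}$, contributing $x^{M_i - d_i}$; for the $j$-th factor we take $z_j^{d_j}$ with sign $(-1)^{d_j}\binom{M_j-1}{d_j}$, contributing $x^{M_j-1-d_j}$. Matching the total $x$-power to $lp-1$ recovers $\sum d_i = \delta_l$. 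Thus the $j$-th coordinate is $(-1)^{\delta_l}\binom{M_j-1}{d_j}\prod_{i\ne j}\binom{M_i}{d_i}$, and using $\binom{M_j-1}{d_j} = \binom{M_j}{d_j}\cdot\frac{M_j-d_j}{M_j} = \binom{M_j}{d_j}(1 - d_j/M_j)$ — valid in $\F_p$ since $0 < M_j < p$ — yields precisely \eqref{Coe}. The "only if" direction of \eqref{cond ne 0} then follows: if $\sum d_i \ne \delta_l$ the coefficient vanishes identically, and if some $d_i > M_i$ the corresponding binomial is zero; conversely under \eqref{cond ne 0} the factors $\binom{M_j}{d_j}$ are all nonzero mod $p$ (as $0 \le d_j \le M_j < p$) and the vector $(1 - d_i/M_i)$ is not the zero vector unless every $d_i = M_i$, which would force $\sum d_i = \sum M_i > \delta_l$, a contradiction — so the coefficient is genuinely nonzero.

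Finally, for the singular-weight relations \eqref{sing M}: the equality $\sum_i m_i I^{[l]}_{\dots;i} = \sum_i M_i I^{[l]}_{\dots;i}$ holds because $M_i \equiv -m_i/q \pmod p$, so the two linear combinations differ by an overall scalar $-1/q$; hence it suffices to check $\sum_i M_i (1 - d_i/M_i) = 0$ in $\F_p$ (the common prefactor $(-1)^{\delta_l}\prod_j\binom{M_j}{d_j}$ being irrelevant). But $\sum_i M_i(1 - d_i/M_i) = \sum_i M_i - \sum_i d_i = \sum_i M_i - \delta_l = lp \equiv 0 \pmod p$, which is exactly the algebraic equation $m_1 I_1 + \dots + m_n I_n = 0$ of \eqref{KZ} read off coefficient-wise. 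The one point needing care is the interchange of "take the $x^{lp-1}$-coefficient" with the binomial expansions and the reduction $\binom{M_j-1}{d_j} = \binom{M_j}{d_j}(1-d_j/M_j)$ in characteristic $p$ — all division by $M_j$ is legitimate precisely because $p > q > m_j$ forces $p \nmid M_i$; I expect this bookkeeping, rather than any conceptual difficulty, to be the main thing to get right.
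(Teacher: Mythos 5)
Your proposal is correct and follows essentially the same route as the paper: the paper's (very terse) proof likewise reads the coefficients off directly from the expansion of $\Phi_p(x,z)/(x-z_j)=\prod_{i\ne j}(x-z_i)^{M_i}(x-z_j)^{M_j-1}$ and deduces \eqref{sing M} from \eqref{Coe} together with the congruence \eqref{M_i}. Your write-up simply makes explicit the bookkeeping (degree count giving $\sum_i d_i=\delta_l$, the identity $\binom{M_j-1}{d_j}=\binom{M_j}{d_j}(1-d_j/M_j)$ with $0<M_j<p$, and $\sum_i M_i(1-d_i/M_i)=lp\equiv 0$) that the paper leaves to the reader.
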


\begin{proof} The first statements follow from formulas \eqref{mp red} and \eqref{P}. 
Formula \eqref{sing M} follows from formulas \eqref{Coe}, \eqref{M_i}.
\end{proof}

\subsection{Coefficients and singular vectors}
Consider the Lie algebra $\slt$ over the field $\F_p$ with standard generators $e,f,h$
and relations
$[e,f]=h$, $[h,e]=2e$, $[h,f]=-2f$.

For $m\in \Z_{\geq 0}$, $m<p$, let $V_m$ be the irreducible $\slt$-module over $\F_p$ with highest weight $m$,
basis $f^jv_m$, $j=0,\dots,m$,  and standard $\slt$-action. 

Consider the $\slt$-module $\ox_{j=1}^nV_{m_j}$.
For $i=1,\dots,n$, let
\bean
\label{f^i}
f^{(i)}v := v_{m_1}\ox\dots\ox v_{m_{i-1}}\ox fv_{m_{i}}\ox  v_{m_{i+1}}\ox\dots\ox  v_{m_{n}}\
\in \ \ox_{j=1}^nV_{m_j}\,.
\eean
Then
\bea
h f^{(i)}v = \Big(\sum_{j=1}^nm_j - 2\Big) f^{(i)}v\,,
\qquad 
e f^{(i)}v =m_i\,v_{m_1}\ox\dots \ox v_{m_i}\ox\dots\ox  v_{m_{n}}\,.
\eea
Denote  by $V[-2]$ the $n$-dimensional subspace of $\ox_{j=1}^nV_{m_j}$ generated by 
$f^{(i)}v$, $i=1,\dots,n$. Denote
\bea
\Sing V[-2] = \Big\{ \sum_{i=1}^n c_if^{(i)}v\ \Big|\ \sum_{i=1}^n c_im_i =0\Big\}\,\subset\, V[-2]\,.
\eea
The $n-1$-dimensional subspace 
 $\Sing V[-2] \subset V[-2]$ is the kernel of the restriction to $V[-2]$ of the operator
$e : \ox_{j=1}^nV_{m_j} \to \ox_{j=1}^nV_{m_j}$\,.
Denote 
\bea
\Sing V[-2][z]\ := \ \Sing V[-2]\ox_{\F_p} \F_p[z]\,.
\eea
Define an isomorphism of vector spaces
\bean
\label{iota}
\iota : \F_p^n \to V[-2],\qquad (c_1,\dots,c_n) \mapsto  \sum_{i=1}^n c_if^{(i)}v \,.
\eean
Then an $\F_p$-hypergeometric solution $I^{[l]}(z)$ is identified with the polynomial
\bea
\iota I^{[l]}(z) := \sum_{d_1,\dots,d_n} \iota I^{[l]}_{d_1,\dots,d_n}\, z_1^{d_1}\dots z_n^{d_n}\,
\in\ \Sing V[-2][z]\,.
\eea

\subsection{Operators $\Om_{ij}^{\frak{sl}_2}$}

The isomorphism $\iota$ identifies a  linear operator $\Om_{ij} : \F_p^n \to \F_p^n$, appearing
in  system \eqref{KZ},
with a linear operator on $V[-2]$, which we denote by $\iota \Om_{ij}$. Namely, the linear operator $\iota \Om_{ij}$
is the restriction to $V[-2]$ of the Casimir operator
on  $\ox_{j=1}^nV_{m_j}$ defined by the formula
\bea
\Om_{ij}^\slt\,: = \,\frac 12 h^{(i)}h^{(j)} + e^{(i)}f^{(j)} +f^{(i)}e^{(j)}\, - \frac{m_im_j}2\on{Id}\,,
\eea
where for $x\in \slt$ we define the operator $x^{(i)}$ on  $\ox_{j=1}^nV_{m_j}$  by
\bea
x^{(i)} := 1\ox\dots \ox 1\ox x\ox 1\ox\dots\ox 1\,
\eea 
with $x$ at the $i$th position. Notice that each $\Om_{ij}^\slt$ preserves $\Sing V[-2]$.

\section{$\F_p$-Beta integral and KZ equations for $n=2$}
\label{sec beta}

\subsection{Solutions over $\C$} 
\label{sec bC}

Consider  the system of KZ equations \eqref{KZ} over  $\C$ for $n=2$. Then the master function is
\bean
\label{mast f2}
\Phi(t,z_1,z_2) = (t-z_1)^{-m_1/q}(t-z_2)^{-m_2/q}\,, 
\eean
and the  one-dimensional space of solutions is generated by the 2-column vector
\bean
\label{I2}
I(z_1,z_2) = \int_{z_1}^{z_2}\Phi(t,z) \Big(\frac {f^{(1)}v}{t-z_1} + \frac {f^{(2)}v}{t-z_2}\Big) dt.
\eean
To determine this integral we assume that $z_1, z_2$ are real, $z_1<z_2$, and 
fix a univalued branch on $[z_1,z_2]$ of each of the factors $(t-z_1)^{-m_1/q}$, $(t-z_2)^{-m_2/q}$.
Then 
\bean
\label{bC}
&&
\\
\notag
&&
\!\!\!\!\!\!\!
I(z_1,z_2) = (z_2-z_1)^{-m_1/q}(z_1-z_2)^{-m_2/q}
\frac{\Ga(-m_1/q+1) \Ga(-m_1/q+1)}{\Ga(-m_1/q-m_2/q+1)} 
\Big(\frac {f^{(1)}v}{-m_1/q} -  \frac {f^{(2)}v}{-m_2/q}\Big),
\eean
\vsk.2>
\noindent
where $(z_k-z_l)^{-m_l/q}$ is the value of the chosen branch of the  function $(t-z_l)^{-m_l/q}$ at $t=z_k$.

\begin{rem}
 Calculation of each 
coordinate of the vector $I(z_1,z_2)$ is reduced to the beta integral, 
\bean
\label{be int}
\int_{0}^{1} t^{\al-1}(1-t)^{\beta-1} dt =\frac{\Ga(\al)\Ga(\beta)}{\Ga(\al+\beta)}\ ,
\eean
\vsk.2>
\noindent
after change of variables. Formula \eqref{bC} shows how 
 the  beta integral  appears in hypergeometric solutions
 of KZ equations.


\end{rem}

\vsk.4>

If $(-m_1/q,-m_2/q) = (M_1,M_2)$, where $M_1,M_2$ are positive integers, then
\bean
\label{bC2}
I(z_1,z_2) = (z_2-z_1)^{M_1}(z_1-z_2)^{M_2}
\frac{\Ga(M_1+1)\,\Ga(M_2+1)}{\Ga(M_1+M_2+1)} 
\Big(\frac {f^{(1)}v}{M_1} - \frac {f^{(2)}v}{M_2}\Big).
\eean

\subsection{Factorial and gamma functions}
Recall that $p$ is an odd prime number. The $p$-adic factorial function is
defined on positive integers by
\bea
(x!)_p \,=\, \prod_{1 \leq j \leq x, \ ( j, p) = 1}\,j\ .
\eea
The Morita $p$-adic gamma function is the unique continuous function of a $p$-adic integer $x$ (with values in 
$\Z_p$)  such that
\bea
\Ga_p(x)  = (-1)^x \prod_{1 \leq j < x, \  ( j, p) = 1}\  j \,,
\eea
for positive integers $x$. Thus $\Ga_p(x+1) = (-1)^x (x!)_p$ for positive integers $x$.

Define the function
\bea
\Ga_{\F_p} : \Z_{\geq0} \to \F_p
\eea
by setting $\Ga_{\F_p}(0)=1$, $\Ga_{\F_p}(1)=-1$ and 
mapping  an integer $x>1$ to the  image of the integer $\Ga_{p}(x)$ in $\F_p$.

\begin{lem}

We have  $\Ga_{\F_p}(x+p)= \Ga_{\F_p}(x)$ for all $x$.

\end{lem}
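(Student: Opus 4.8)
The plan is to establish the periodicity $\Ga_{\F_p}(x+p)=\Ga_{\F_p}(x)$ by reducing to the known quasi-periodicity of the Morita $p$-adic gamma function $\Ga_p$ and then checking that the reduction modulo $p$ kills the offending sign. Recall that $\Ga_p$ satisfies the functional equation $\Ga_p(x+1) = -x\,\Ga_p(x)$ when $(x,p)=1$ and $\Ga_p(x+1) = -\Ga_p(x)$ when $p\mid x$, valid for all $p$-adic integers $x$ by continuity. Iterating this relation $p$ times starting from $x$ (with $x$ a nonnegative integer, say), one of the $p$ consecutive arguments $x, x+1, \dots, x+p-1$ is divisible by $p$ and the other $p-1$ are units; hence $\Ga_p(x+p) = (-1)^p\Big(\prod_{0\le j<p,\ p\nmid x+j}(x+j)\Big)\Ga_p(x)$. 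The product $\prod_{0\le j<p,\ p\nmid x+j}(x+j)$ runs over a complete set of nonzero residues mod $p$, so by Wilson's theorem it is $\equiv (p-1)! \equiv -1 \pmod p$. Since $p$ is odd, $(-1)^p = -1$, so modulo $p$ we get $\Ga_p(x+p) \equiv (-1)\cdot(-1)\cdot \Ga_p(x) = \Ga_p(x) \pmod p$, which is exactly $\Ga_{\F_p}(x+p)=\Ga_{\F_p}(x)$.

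First I would handle the bookkeeping for small arguments: the definition sets $\Ga_{\F_p}(0)=1$ and $\Ga_{\F_p}(1)=-1$ by hand, while for $x>1$ it is the image of $\Ga_p(x)$. So I need $\Ga_{\F_p}(p) = \Ga_{\F_p}(0) = 1$ and $\Ga_{\F_p}(p+1) = \Ga_{\F_p}(1) = -1$ as base cases, and $\Ga_{\F_p}(x+p)=\Ga_{\F_p}(x)$ for $x\ge 2$ as the generic case. For the generic case the argument of the previous paragraph applies directly once I note $x+p > 1$ so both sides are images of $\Ga_p$. For $x=0$: $\Ga_p(p) = -\Ga_p(p-1) = -(-1)^{p-1}(p-1)! \equiv -(p-1)! \equiv 1 \pmod p$, using $p$ odd and Wilson; this matches $\Ga_{\F_p}(0)=1$. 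For $x=1$: $\Ga_p(p+1) = -p\,\Ga_p(p) \equiv 0 \cdot 1 = 0 \pmod p$?—this would be wrong, so I must instead note that $\Ga_{\F_p}(1)$ is defined separately as $-1$, not as the image of $\Ga_p(1)=(-1)^1\prod_{1\le j<1}j$, an empty product, giving $\Ga_p(1)=-1$; and $\Ga_{\F_p}(p+1)$ is the image of $\Ga_p(p+1) = -p\,\Ga_p(p) \equiv 0$. These do not agree, so I should double-check: in fact $\Ga_p(p+1)=-(p!)_p = -\prod_{1\le j\le p,\ (j,p)=1}j = -\prod_{1\le j\le p-1}j = -(p-1)! \equiv 1 \pmod p$, and similarly $\Ga_p(1)$ via the stated formula with $x=1$ is $(-1)^1 \cdot(\text{empty product}) = -1$, so indeed the image is $-1$; wait—so $\Ga_{\F_p}(p+1) = 1 \ne -1 = \Ga_{\F_p}(1)$?

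The resolution of this apparent discrepancy is the key technical point and where I expect to spend the most care: the correct relation is $\Ga_p(x+1) = -x\,\Ga_p(x)$ for $(x,p)=1$ but $\Ga_p(x+1) = -\Ga_p(x)$ for $p \mid x$, so from $\Ga_p(p)$ to $\Ga_p(p+1)$ the relevant argument is $x=p$, which is divisible by $p$, hence $\Ga_p(p+1) = -\Ga_p(p)$, not $-p\,\Ga_p(p)$. Thus $\Ga_p(p+1) = -\Ga_p(p) \equiv -1 \pmod p$, matching $\Ga_{\F_p}(1) = -1$. With this corrected, both base cases check out, and the iterated functional equation in the generic case uses the "$p\mid$" branch exactly once (at the unique $j$ with $p\mid x+j$) contributing a factor $-1$, and the "unit" branch $p-1$ times contributing $(-1)^{p-1}\prod(x+j) \equiv \prod(\text{nonzero residues}) \equiv -1 \pmod p$; together $(-1)(-1) = 1$, giving the periodicity. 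So the proof structure is: (1) recall the two-branch functional equation for $\Ga_p$; (2) iterate over a window of length $p$; (3) apply Wilson's theorem plus $p$ odd; (4) separately verify $x=0$ and $x=1$ against the hand-set values. The main obstacle is purely the careful treatment of the $p\mid x$ versus $(x,p)=1$ branches and the special definitions at $x=0,1$; the number theory itself is a one-line Wilson argument.
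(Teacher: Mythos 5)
Your proof is correct and rests on exactly the same point as the paper's one-line argument, namely Wilson's theorem $(p-1)!\equiv -1 \pmod p$: iterating the two-branch functional equation of $\Ga_p$ over a window of length $p$ just reproduces the ratio $\Ga_p(x+p)/\Ga_p(x)=(-1)^p\prod_{x\le j<x+p,\ p\nmid j}j\equiv(-1)(-1)=1$, and your separate check of the hand-set values at $x=0,1$ is a worthwhile detail the paper leaves implicit. The only blemish is the intermediate equality $\Ga_p(p)=-\Ga_p(p-1)$, which should read $\Ga_p(p)=-(p-1)\,\Ga_p(p-1)$ (the argument $p-1$ is a unit), but this does not affect your final congruences $\Ga_p(p)\equiv 1$ and $\Ga_p(p+1)\equiv -1$, so the conclusion stands.
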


\begin{proof} The lemma follows from Wilson's theorem, $(p-1)! \equiv -1$ (mod $p$).
\end{proof}

We extend the function $\Ga_{\F_p}$ to the set $\Z$ by periodicity,   $\Ga_{\F_p}(x+p)= \Ga_{\F_p}(x)$.
Then we get
\bea
\Ga_{\F_p}(x)\,\Ga_{\F_p}(1-x) \,= \,(-1)^x
\eea
also by Wilson's theorem.

\begin{lem}
\label{lem ga}
Let $A,B$ be positive integers such that $A<p$, $B<p$, $p\leq A+B$. Then we have an identity in $\F_p$,
\bean
\label{p-ga}
  B\,\binom{B-1}{A+B-p} 
  &=&
B\,\binom{B-1}{p-A-1}
\\
\notag
& =& 
 (-1)^{A+1}\,\frac{A! \,B!}
{(A+B-p)!}
=(-1)^{A}\,\frac{\Ga_{\F_p}(A+1)\Ga_{\F_p}(B+1)}
{\Ga_{\F_p}(A+B-p+1)}    \,.
\eean
\qed

\end{lem}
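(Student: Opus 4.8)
The plan is to prove Lemma \ref{lem ga} by a sequence of elementary manipulations of factorials modulo $p$, using Wilson's theorem as the only external input, and then matching the result against the definition of $\Ga_{\F_p}$.

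\medskip

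\textbf{Step 1: The two binomials agree.} The first equality $B\binom{B-1}{A+B-p}=B\binom{B-1}{p-A-1}$ is immediate from the symmetry $\binom{B-1}{k}=\binom{B-1}{B-1-k}$, since $(B-1)-(A+B-p)=p-A-1$. This requires only that $0\le A+B-p\le B-1$, i.e. $p\le A+B$ and $A\le p-1$, both of which are hypotheses.

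\medskip

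\textbf{Step 2: Rewrite as a ratio of factorials over $\F_p$.} Write
\[
B\binom{B-1}{A+B-p}=\frac{B\,(B-1)!}{(A+B-p)!\,(p-A-1)!}=\frac{B!}{(A+B-p)!\,(p-A-1)!}.
\]
Here all three factorials have arguments strictly less than $p$ (for the numerator, $B<p$; for the denominators, $A+B-p\le B-1<p$ and $p-A-1<p$), so each is a unit in $\F_p$ and the expression makes sense. I then want to convert $(p-A-1)!$ into something involving $A!$. By Wilson's theorem, $(p-1)!\equiv -1\pmod p$, and grouping the factors of $(p-1)!$ as $\{1,\dots,p-A-1\}\cup\{p-A,\dots,p-1\}$ gives
\[
(p-1)! = (p-A-1)!\cdot\prod_{j=p-A}^{p-1} j \equiv (p-A-1)!\cdot\prod_{k=1}^{A}(-k) = (p-A-1)!\,(-1)^A A!\pmod p,
\]
so $(p-A-1)!\equiv (-1)^{A+1}/A!\pmod p$. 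Substituting,
\[
\frac{B!}{(A+B-p)!\,(p-A-1)!}\equiv (-1)^{A+1}\frac{A!\,B!}{(A+B-p)!}\pmod p,
\]
which is the second equality.

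\medskip

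\textbf{Step 3: Translate factorials into $\Ga_{\F_p}$.} Since $A,B$, and $A+B-p$ are all nonnegative and $<p$, and since $\Ga_{\F_p}(x+1)\equiv(-1)^x(x!)_p$ with $(x!)_p\equiv x!\pmod p$ for $x<p$ (no factor divisible by $p$ appears), I have $x!\equiv(-1)^x\Ga_{\F_p}(x+1)\pmod p$ for each of $x=A,B,A+B-p$. Therefore
\[
(-1)^{A+1}\frac{A!\,B!}{(A+B-p)!}\equiv (-1)^{A+1}\frac{(-1)^A\Ga_{\F_p}(A+1)\,(-1)^B\Ga_{\F_p}(B+1)}{(-1)^{A+B-p}\Ga_{\F_p}(A+B-p+1)}.
\]
The sign collects to $(-1)^{A+1+A+B-(A+B-p)}=(-1)^{A+1+p}=(-1)^A$ since $p$ is odd, giving exactly $(-1)^A\,\Ga_{\F_p}(A+1)\Ga_{\F_p}(B+1)/\Ga_{\F_p}(A+B-p+1)$, as claimed. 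One should double-check the edge case $A+B-p=0$, where $(A+B-p)!=1=\Ga_{\F_p}(1)\cdot(-1)$ up to the convention $\Ga_{\F_p}(1)=-1$; the formula $x!\equiv(-1)^x\Ga_{\F_p}(x+1)$ at $x=0$ reads $1\equiv(-1)^0\cdot 1$, consistent.

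\medskip

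The only genuine subtlety, and the step I would be most careful about, is Step 2: keeping the range conditions straight so that every factorial appearing is coprime to $p$ (hence invertible in $\F_p$), and correctly splitting Wilson's product to extract the sign $(-1)^A$. Everything else is bookkeeping with the definitions of $(x!)_p$ and $\Ga_{\F_p}$ together with the parity of $p$.
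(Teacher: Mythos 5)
Your argument is correct and is essentially the paper's own proof in different packaging: both reduce $B\binom{B-1}{p-A-1}$ to a ratio of factorials that are units mod $p$ and invoke Wilson's theorem (your identity $(p-A-1)!\equiv(-1)^{A+1}/A!$ is exactly the paper's reflection $1\cdots(p-A-1)\equiv(-1)^{p-A-1}(p-1)\cdots(A+1)$ combined with $(p-1)!\equiv-1$), and then both convert to $\Ga_{\F_p}$ via the relation $\Ga_{\F_p}(x+1)=(-1)^x(x!)_p$ stated in the paper. The one small caveat is your $x=0$ aside, which is written as if $\Ga_{\F_p}(1)=1$ while the paper's convention sets $\Ga_{\F_p}(1)=-1$; this boundary case $A+B=p$ is glossed over in the paper's own proof in exactly the same way, so it reflects the paper's sign conventions rather than a defect specific to your argument.
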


\begin{proof}
We have
\bea
&&
B\binom{B-1}{p-A-1} = \frac{B(B-1) \cdots(A+B-p+1)}
{1\cdots (p-A-1)}
\\
&&
= \frac{B \cdots (A+B-p+1) (A+B-p)! A!}
{(-1)^{p-A-1} (p-1)(p-2) \cdots  (A+1)\,A!(A+B-p)!}
\\
&&
= (-1)^{A+1}\,\frac{A! \,B!}
{(A+B-p)!} = (-1)^{A} \,\frac{\Ga_{\F_p}(A+1)\,\Ga_{\F_p}(B+1)}
{\Ga_{\F_p}(A+B-p+1)}\,.
\eea
\end{proof}

\subsection{$\F_p$-hypergeometric solutions}
\label{sec soF}

Consider  the system of KZ equations \eqref{KZ} over  $\F_p$ for $n=2$.
Assume that system \eqref{KZ} has ample reduction for a prime $p$. Then the integers $M_1$, $M_2$,
introduced in \eqref{M_i}, satisfy the inequalities
\bean
\label{Mn2}
0\,< M_1, \,M_2,\,  M_1+M_2-p +1\, <\, p\,.
\eean
In this case the module 
$\mc M$ of $\F_p$-hypergeometric solutions is of rank one and generated by
$I^{[1]}(z_1,z_2)$. The solution $I^{[1]}(z_1,z_2)$ is the coefficient of $x^{p-1}$ in the Taylor expansion
of the polynomial 2-vector
\bean
\label{Fp2}
P(x,z_1,z_2) =
(x-z_1)^{M_1}(x-z_2)^{M_2} \Big(\frac {f^{(1)}v}{x-z_1} + \frac {f^{(2)}v}{x-z_2}\Big),
\eean
cf. \eqref{I2}

\begin{thm}
\label{thm beta}
We have
\bean
\label{p beta} 
\phantom{aaa}
I^{[1]}(z_1,z_2)
&=& (-1)^{M_2}
 (z_2-z_1)^{M_1+M_2-p}
\frac{\Ga_{\F_p}(M_1+1)\,\Ga_{\F_p}(M_2+1)}{\Ga_{\F_p}(M_1+M_2-p+1)}
\Big(\frac {f^{(1)}v}{M_1} - \frac {f^{(2)}v}{M_2}\Big)
\\
\notag
&=& (-1)^{M_1}
 (z_1-z_2)^{M_1+M_2-p}
\frac{\Ga_{\F_p}(M_1+1)\,\Ga_{\F_p}(M_2+1)}{\Ga_{\F_p}(M_1+M_2-p+1)}
\Big(\frac {f^{(2)}v}{M_2}-\frac {f^{(1)}v}{M_1}\Big),
\eean
cf. \eqref{bC2}.

\end{thm}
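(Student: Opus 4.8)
The plan is to compute the coefficient of $x^{p-1}$ in the Taylor expansion of the polynomial 2-vector $P(x,z_1,z_2)$ in \eqref{Fp2} directly, and then match it against the right-hand side of \eqref{p beta}. First I would translate the generating-function statement into a statement about binomial coefficients: writing $P(x,z_1,z_2) = \sum_i P^i(z)\,x^i$ as in \eqref{P}, the $\F_p$-hypergeometric solution is $I^{[1]}(z_1,z_2)=P^{p-1}(z)$, so I need the coefficient of $x^{p-1}$ in $(x-z_1)^{M_1-1}(x-z_2)^{M_2}\,f^{(1)}v + (x-z_1)^{M_1}(x-z_2)^{M_2-1}\,f^{(2)}v$. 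Equivalently, I can invoke Lemma \ref{lem coef}: the coefficients $I^{[1]}_{d_1,d_2}$ are given explicitly by \eqref{Coe}, and are nonzero exactly when $d_1+d_2=\delta_1 = M_1+M_2-p$ with $d_i\le M_i$. By the ample reduction hypothesis and Lemma \ref{lem am ine} with $l=1$, we have $0<M_1+M_2-p<p$, so $\delta_1$ is a genuine nonnegative degree; moreover the constraints $d_1\le M_1$, $d_2 = \delta_1-d_1\le M_2$ force $M_1-p\le d_1-0$ ... i.e. $d_1$ ranges over $\max(0,\delta_1-M_2)\le d_1\le \min(M_1,\delta_1)=\delta_1$, which is a nonempty range. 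This identifies $I^{[1]}(z_1,z_2)$ as a homogeneous polynomial of degree $\delta_1$ in $z_1,z_2$.

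Next I would observe that a homogeneous degree-$\delta_1$ polynomial vector lying in $\Sing V[-2][z]$ and solving the $n=2$ KZ equations is determined up to scalar — this is essentially the $n=2$ analog of the uniqueness implicit in Theorem \ref{thm li in} together with the fact that $r=1$ under ample reduction. So it suffices to check that the claimed right-hand side $R(z_1,z_2) := (-1)^{M_2}(z_2-z_1)^{M_1+M_2-p}\,\dfrac{\Ga_{\F_p}(M_1+1)\Ga_{\F_p}(M_2+1)}{\Ga_{\F_p}(M_1+M_2-p+1)}\bigl(\tfrac{f^{(1)}v}{M_1}-\tfrac{f^{(2)}v}{M_2}\bigr)$ agrees with $I^{[1]}$ in a single monomial. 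The cleanest choice is the extreme monomial $z_1^{0}z_2^{\delta_1}$ (or $z_1^{\delta_1}z_2^0$): from \eqref{Coe}, the coefficient of $z_1^{\delta_1}z_2^0$ in $I^{[1]}$ is $(-1)^{\delta_1}\binom{M_1}{\delta_1}\binom{M_2}{0}\bigl(1-\tfrac{\delta_1}{M_1},\,1\bigr) = (-1)^{\delta_1}\binom{M_1}{\delta_1}\bigl(\tfrac{M_2-p}{M_1},\,1\bigr)$, using $M_1-\delta_1 = p-M_2$. On the other side, the coefficient of $z_1^{\delta_1}z_2^0$ in $(z_2-z_1)^{\delta_1}$ is $(-1)^{\delta_1}$, so the corresponding coefficient of $R$ is $(-1)^{M_2}(-1)^{\delta_1}\dfrac{\Ga_{\F_p}(M_1+1)\Ga_{\F_p}(M_2+1)}{\Ga_{\F_p}(M_1+M_2-p+1)}\bigl(\tfrac1{M_1},-\tfrac1{M_2}\bigr)$.

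The heart of the argument is then the scalar identity in $\F_p$ matching these two, namely
\[
\binom{M_1}{\delta_1}\,(M_2-p)\;=\;(-1)^{M_2}\,\frac{\Ga_{\F_p}(M_1+1)\,\Ga_{\F_p}(M_2+1)}{\Ga_{\F_p}(M_1+M_2-p+1)}\;=\;-\,M_2\,(-1)^{M_2}\,\frac{\Ga_{\F_p}(M_1+1)\,\Ga_{\F_p}(M_2+1)}{\Ga_{\F_p}(M_1+M_2-p+1)}\cdot\frac1{M_2}\cdot(-M_2),
\]
and this is exactly Lemma \ref{lem ga} applied with $A=M_1$, $B=M_2$ (note $\delta_1 = M_1+M_2-p = A+B-p$ and $\binom{M_1}{\delta_1}=\binom{M_1}{p-M_2}$, while $\binom{B-1}{A+B-p}$ appearing in \eqref{p-ga} relates to $\binom{M_1}{\delta_1}$ by the symmetry $\binom{M_1}{M_1+M_2-p}=\binom{M_1}{p-M_2}$ — one checks the two $\F_p$-binomials coincide up to the factors $M_2$, $M_2-p=M_2$ in $\F_p$). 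So the main obstacle is purely bookkeeping: keeping straight which binomial coefficient shows up, using $p\equiv 0$ to simplify $M_2-p$ to $M_2$ and $M_1+M_2-p$ to the genuine nonnegative integer $\delta_1$ (this is where ample reduction, via Lemma \ref{lem am ine}, is essential — it guarantees $0\le \delta_1<p$ so that $\Ga_{\F_p}(\delta_1+1)$ is computed by the "small positive integer" branch and Lemma \ref{lem ga} applies verbatim), and then reading off that the scalar identity is precisely \eqref{p-ga}. Finally, the second displayed equality in \eqref{p beta} follows from the first by the trivial substitution $(z_2-z_1)^{\delta_1} = (-1)^{\delta_1}(z_1-z_2)^{\delta_1}$ and $(-1)^{M_2}(-1)^{\delta_1} = (-1)^{M_1+2M_2-p} = (-1)^{M_1-p}=(-1)^{M_1+1}\cdot(-1)^{p+1}$; since $p$ is odd this is $(-1)^{M_1}$, together with flipping the sign inside the bracket.
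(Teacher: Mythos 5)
Your overall plan (read off $I^{[1]}$ from the coefficient formula \eqref{Coe} and match it against the right-hand side via Lemma \ref{lem ga}) is workable, but as written it has two genuine gaps. First, the reduction to checking a single monomial is not justified: for a ``determined up to scalar'' argument to give $I^{[1]}=R$, you must know that $R(z_1,z_2)=(-1)^{M_2}(z_2-z_1)^{M_1+M_2-p}\,G\,w$, with $w=\frac{f^{(1)}v}{M_1}-\frac{f^{(2)}v}{M_2}$ and $G=\frac{\Ga_{\F_p}(M_1+1)\,\Ga_{\F_p}(M_2+1)}{\Ga_{\F_p}(M_1+M_2-p+1)}$, is itself a polynomial solution of \eqref{KZ}, and you never verify this. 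It is true --- $w\in\Sing V[-2]$ and $\Om_{12}w=q(M_1+M_2)\,w$, so $\tfrac1q\tfrac{\Om_{12}}{z_1-z_2}R=\tfrac{M_1+M_2-p}{z_1-z_2}R=\der_{z_1}R$ --- but without this check, matching one coefficient proves nothing. Also, Theorem \ref{thm li in} is a linear-independence statement, not a uniqueness/spanning statement; the uniqueness you need is either the elementary $n=2$ fact that $(z_1-z_2)\der_{z_1}f=(M_1+M_2)f$ has at most a one-dimensional space of homogeneous polynomial solutions of degree $\dl_1<p$, or Theorem \ref{thm uniq} / Theorem \ref{thm am co} from the later sections.

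Second, there is a sign error that breaks your key identity. Since $M_1-\dl_1=p-M_2$, one has $1-\dl_1/M_1=(p-M_2)/M_1\equiv -M_2/M_1$, not $(M_2-p)/M_1\equiv M_2/M_1$ as you wrote. Consequently the displayed identity $\binom{M_1}{\dl_1}(M_2-p)=(-1)^{M_2}G$ is false; the identity actually needed is $M_2\binom{M_1}{\dl_1}=(-1)^{M_2+1}G$, and it is not Lemma \ref{lem ga} with $A=M_1$, $B=M_2$ ``verbatim'': that instance gives $M_2\binom{M_2-1}{\dl_1}=(-1)^{M_1}G$, a different binomial and a different sign. To get the correct identity, apply Lemma \ref{lem ga} with $A=M_2$, $B=M_1$ and combine it with $\binom{M_1}{\dl_1}=\frac{M_1}{M_1-\dl_1}\binom{M_1-1}{\dl_1}$ and $M_1-\dl_1\equiv -M_2$; this absorption step carries exactly the sign you dropped. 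Followed literally, your single-coefficient check would come out off by $-1$ and would appear to contradict the theorem (your conversion between the two lines of \eqref{p beta} has a similar sign slip: $(-1)^{M_2+\dl_1}=(-1)^{M_1+1}$, not $(-1)^{M_1}$). For comparison, the paper sidesteps both issues: it shifts $x\mapsto x+z_1$, invokes Lucas' theorem to keep the coefficient of $x^{p-1}$ unchanged, reads off $I^{[1]}$ as $(z_1-z_2)^{M_1+M_2-p}\bigl(\binom{M_2}{p-M_1}f^{(1)}v+\binom{M_2-1}{p-M_1-1}f^{(2)}v\bigr)$, and then applies Lemma \ref{lem ga} once, with no uniqueness argument needed.
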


\begin{proof} 
Make the transformation
\bea
P(x,z_1,z_2)\
\mapsto \
P(x+z_1,z_1,z_2) =
x^{M_1}(x+z_1-z_2)^{M_2} \Big(\frac {f^{(1)}v}{x} + \frac {f^{(2)}v}{x+z_1-z_2}\Big).
\eea
This change of variables does not change the coefficient of $x^{p-1}$ in the Taylor expansion
 by Lucas theorem, see 
\cite{Lu} and the proof of
\cite[Lemma 5.2]{V5}. Hence
\bea
I^{[1]}(z_1,z_2)\,
=
\,(z_1-z_2)^{M_1+M_2-p}\Big(
\binom{M_2}{p-M_1} f^{(1)}v
+
\binom{M_2-1}{p-M_1-1}f^{(2)}v\Big).
\eea
Then
\bean
\label{bin}
\phantom{aaa}
\binom{M_2}{p-M_1} f^{(1)}v
+
\binom{M_2-1}{p-M_1-1}f^{(2)}v
=
M_2\binom{M_2-1}{p-M_1-1}\Big(\frac {f^{(1)}v}{p-M_1} 
+
\frac{f^{(2)}v}{M_2}\Big).
\eean
Now the theorem follows from Lemma \ref{lem ga}.
\end{proof}

\begin{rem}

For positive integers $a,b$ satisfying the inequalities\
$a<p$, $b<p$,  $p-1\leq a+b$, we have  the {\it$\F_p$-beta integral formula}
\bean
\label{fpbe}
\int_{[1]_p} x^{a}(1-x)^{b} dx \,=\,  \,- \,\frac{a!\,b!}{(a+b-p+1)!}\,,
\eean
which  follows from Lemma \ref{lem ga}.

Formula \eqref{be int} for the beta integral is the one-dimensional case of the
$n$-dimensional Selberg integral formula, see \cite{Se}. In \cite{RV1, RV2} we develop
$\F_p$-analogs of the $n$-dimensional Selberg integral formulas.

\end{rem}

\section{Leading term of a polynomial solution}
\label{sec lea}

\subsection{Lexicographical ordering}

For a permutation $\si=(\si_1,\dots,\si_n)\in S_n$ denote by $>_\si$  the {\it lexicographical ordering} 
of monomials
\bea
z_1^{d_1}\dots z_n^{d_n}\,,\qquad d_1,\dots,d_n\in\Z_{\geq 0}\,,
\eea
relative to the ordering $(\si_1,\dots,\si_n)$ of the integers $(1,\dots,n)$.  So $z_{\si_1} >_\si
z_{\si_2} >_\si\dots$ $>_\si z_{\si_{n-1}} >_\si z_{\si_n}$ and so on.

For a nonzero polynomial
\bea
f(z)=\sum_{d_1,\dots,d_n} a_{d_1,\dots,d_n} z_1^{d_1}\dots z_n^{d_n}\,
\eea
let $f_\si(z)$ be the summand
$a_{d_1,\dots,d_n} z_1^{d_1}\dots z_n^{d_n}$ corresponding to the largest
monomial entering   $f(z)$ with a nonzero coefficient. We call $f_\si(z)$ 
 the {\it $\si$-leading term} of $f(z)$, the corresponding 
$a_{d_1,\dots,d_n}$ the {\it $\si$-leading coefficient}, the corresponding
$z_1^{d_1}\dots z_n^{d_n}$ the {\it $\si$-leading monomial}.

\vsk.2>
In particular, consider elements $f(z)=(f_1(z),\dots,f_n(z)) \in (\F_p[z])^n$ as polynomials
in $z$ with coefficients in $\F_p^n$. Then for any  $\si\in S_n$ a nonzero element $f(z)$ has 
 $\si$-leading term
$f_\si(z) = a_{d_1,\dots,d_n} z_1^{d_1}\dots z_n^{d_n}$ and $\si$-leading coefficient
$a_{d_1,\dots,d_n}\in \F_p^n$.

\subsection{Leading term of a polynomial solution}

Consider  the lexicographical ordering $>_\id$ of monomials 
corresponding to the identity permutation  $\id\in S_n$.
Hence $z_1>_\id z_2>_\id\dots>_\id z_n$ and so on.

\begin{lem}
\label{lem le co}

Let $ I(z)=(I_1(z),\dots, I_n(z))$ be a polynomial solution of system \eqref{KZ} over $\F_p$
(not necessarily  an $\F_p$-hypergeometric solution). Let
$C z_1^{d_1}\dots z_n^{d_n}$, $C=(C_1$,
\dots, $C_n) \in \F_p^n$,
 be the $\id$-leading term of $I(z)$.
  Then
\bean
\label{eqn le}
\phantom{aaa}
\sum_{j=1}^n m_jC_j =0, \quad 
\sum_{l=j+1}^n \,\Om_{jl}\,C \,=\, q d_j\, C\,,\quad j=1, \dots, n-1\,, \quad d_n\equiv 0 \ (\on{mod}\,p)\,,
\eean
where $C$ is considered as a column vector.

\end{lem}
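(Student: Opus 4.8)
The plan is to extract the stated relations by substituting the $\id$-leading term into the KZ system \eqref{KZ} and tracking, for each equation, the $\id$-leading term of both sides. Write $I(z) = C\,z_1^{d_1}\cdots z_n^{d_n} + (\text{lower order})$ where ``lower order'' means strictly smaller in $>_\id$. The algebraic equation $m_1 I_1(z) + \dots + m_n I_n(z) = 0$ immediately gives $\sum_j m_j C_j = 0$ by looking at the coefficient of the leading monomial; this is the first claim.

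For the differential equations, fix $i \in \{1,\dots,n\}$. The key observation is how differentiation and the rational factors $1/(z_i-z_j)$ interact with the lexicographical order. Multiplying the $i$th equation through by $\prod_{j\ne i}(z_i-z_j)$ clears denominators: $\prod_{j\ne i}(z_i-z_j)\,\frac{\partial I}{\partial z_i} = \frac1q\sum_{j\ne i}\Om_{ij}\,\Big(\prod_{k\ne i,j}(z_i-z_k)\Big)I$. Now I would expand each side and identify its $\id$-leading term. On the left, $\frac{\partial I}{\partial z_i}$ has leading term $d_i\,C\,z_1^{d_1}\cdots z_i^{d_i-1}\cdots z_n^{d_n}$ (interpreted as zero in $\F_p$ if $p\mid d_i$; this is precisely the source of the congruence condition), and multiplying by $\prod_{j\ne i}(z_i-z_j)$ — whose own leading monomial with respect to $>_\id$ is $z_i^{\,|\{j:j>i\}|}\prod_{j<i} z_j$... here I must be careful: $(z_i - z_j)$ has leading term $z_j$ when $j<i$ and $z_i$ when $j>i$. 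So the leading monomial of $\prod_{j\ne i}(z_i-z_j)$ is $\big(\prod_{j<i} z_j\big)\cdot z_i^{\,n-i}$. On the right, $\prod_{k\ne i,j}(z_i-z_k)$ contributes leading monomial $\big(\prod_{k<i,\,k\ne j} z_k\big)\cdot z_i^{\,|\{k>i:k\ne j\}|}$, and one must compare these across the different values of $j$ to see which terms survive as the overall $\id$-leading term. The terms with $j>i$ carry one fewer factor of $z_i$ in their leading monomial but one extra factor of a small-index variable relative to the $j<i$ terms; a direct comparison shows that for $j = i+1,\dots,n$ the products $\Om_{ij}\big(\prod_{k\ne i,j}(z_i-z_k)\big)I$ all share the same leading monomial, namely $\big(\prod_{m<i} z_m\big)\cdot z_i^{\,n-i}\cdot z_1^{d_1}\cdots z_n^{d_n}$, which coincides with the leading monomial on the left-hand side; whereas the terms with $j<i$ contribute strictly smaller monomials. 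Equating leading coefficients then yields $d_i\,C = \frac1q\sum_{j=i+1}^n \Om_{ij}\,C$, i.e. $\sum_{l=i+1}^n \Om_{il}\,C = q\,d_i\,C$, which is the second claim (after renaming $i$ to $j$). Taking $i = n$ gives $d_n\,C = 0$ in $\F_p^n$, hence $d_n \equiv 0 \pmod p$ since $C\ne 0$; this is the last claim.

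The main obstacle — the step deserving the most care — is the bookkeeping of leading monomials in the cleared-denominator form: one has to verify rigorously that among the $n-1$ summands $\Om_{ij}\big(\prod_{k\ne i,j}(z_i-z_k)\big)I$ exactly those with $j>i$ reach the maximal monomial, that this maximal monomial equals the one on the left, and that no cancellation of leading coefficients occurs prematurely (the $\Om_{ij}$ have a fixed sparse form from \eqref{Om_ij_reduced}, so one should check that $\sum_{j>i}\Om_{ij}C$ genuinely equals $q d_i C$ rather than vanishing, but the equation is an identity so cancellation, if it happened, would just force $d_i\equiv 0$, which is consistent). An alternative, perhaps cleaner, route avoiding denominator-clearing: argue by induction on $i$ from $i=1$, at each stage using that $\frac{\partial}{\partial z_i}$ strictly lowers the $z_i$-degree while the operator $\frac1q\sum_{j\ne i}\frac{\Om_{ij}}{z_i-z_j}$, applied to a polynomial, when expanded in the relevant order, has $\id$-leading behaviour governed by the $j>i$ terms because $1/(z_i-z_j) = z_j^{-1}(1 - z_i/z_j + \cdots)$ for $j<i$ contributes negative-order corrections that cannot survive in a polynomial identity. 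I expect the first route to be the one carried out in the paper's proof.
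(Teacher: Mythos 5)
Your proposal is correct and takes essentially the same route as the paper: the paper simply rewrites the KZ system in the cleared-denominator form \eqref{mKZ} and equates the $\id$-leading coefficient to zero, which is exactly your first route, with the $j>i$ terms dominating the $j<i$ terms and the algebraic equation giving $\sum_j m_jC_j=0$. The only blemish is an off-by-one slip in the displayed common monomial, which should be $\bigl(\prod_{k<i}z_k\bigr)z_i^{\,n-i-1}\,z_1^{d_1}\cdots z_n^{d_n}$ rather than carrying $z_i^{\,n-i}$ (consistent with your own description of the left-hand side), and it is cleanest to phrase the step as extracting the coefficient of this fixed monomial from the identity \eqref{mKZ}, which handles the case $d_i\equiv 0\pmod p$ without any discussion of cancellation.
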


\begin{proof}

Rewrite the KZ equations as
\bean
\label{mKZ}
\Big(\prod_{k\ne j}(z_j-z_k)\Big) \frac{\der I}{\der z_j} 
- \frac 1q \sum_{l\ne j} \Big( \prod_{k\notin \{l,j\}}(z_j-z_k)\Big) \Om_{jl} I\,=\,0\,,
\eean 
$j=1,\dots,n$. Now the lemma follows from calculating  the leading term of the left-hand side in \eqref{mKZ}
 and equating it to zero.
\end{proof}

For $j\ne l$ introduce the $n\times n$-matrices
\bean
\label{Om_ij_M}
 \Omega_{jl}^M \ = \ \begin{pmatrix}
             & \vdots^j &  & \vdots^l &  \\
        {\scriptstyle j} \cdots & {M_l} & \cdots &
            -M_l   & \cdots \\
                   & \vdots &  & \vdots &   \\
        {\scriptstyle l} \cdots & -M_j & \cdots & {M_j}&
                 \cdots \\
                   & \vdots &  & \vdots &
                   \end{pmatrix} 
\eean                    
with all other entries equal to zero. 
For $j=1,\dots,n-1$, denote $\Om_j^M=\sum_{l=j+1}^n \,\Om_{jl}^M$\,.

\begin{cor}
\label{cor le co}
Let $ I(z)$ be a polynomial solution of system \eqref{KZ} over $\F_p$.
Then the  $\id$-leading term $C z_1^{d_1}\dots z_n^{d_n}$ of  $I(z)$ satisfies the system of equations:
\bean
\label{eqn le}
\sum_{j=1}^n M_jC_j =0, \quad 
\Om_{j}^M C \,=\, d_j\, C\,,\quad j=1, \dots, n-1\,, \quad d_n\equiv 0 \ (\on{mod}\,p)\,.
\eean
\qed

\end{cor}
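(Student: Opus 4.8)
The plan is to deduce Corollary~\ref{cor le co} directly from Lemma~\ref{lem le co} by the change of variables $m_i \rightsquigarrow M_i$, exploiting the fact that $M_i \equiv -m_i/q \pmod p$. First I would observe that the first relation of \eqref{eqn le} in Lemma~\ref{lem le co}, namely $\sum_j m_j C_j = 0$, is an identity in $\F_p^n$, so multiplying through by the invertible scalar $-q^{-1}$ (valid since $p > q$ is prime, so $q \ne 0$ in $\F_p$) and using $M_j \equiv -m_j/q$ converts it into $\sum_j M_j C_j = 0$. The last congruence $d_n \equiv 0 \pmod p$ carries over verbatim. So the only real content is the middle family of equations.

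Next I would rewrite the middle equations $\sum_{l=j+1}^n \Om_{jl} C = q\,d_j\, C$ from Lemma~\ref{lem le co}. Here I would compare the matrices $\Om_{jl}$ of \eqref{Om_ij_reduced} and $\Om_{jl}^M$ of \eqref{Om_ij_M}. Inspecting the two displays entrywise, the nonzero entries of $\Om_{jl}$ are $(\Om_{jl})_{jj} = -m_l$, $(\Om_{jl})_{jl} = m_l$, $(\Om_{jl})_{lj} = m_i$ --- wait, I must read the indices carefully: in \eqref{Om_ij_reduced} with the roles $(i,j)$, the entries in row $i$ are $-m_j$ (column $i$) and $m_j$ (column $j$), and in row $j$ they are $m_i$ (column $i$) and $-m_i$ (column $j$). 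Relabelling to $(j,l)$: row $j$ has $-m_l$ in column $j$ and $m_l$ in column $l$; row $l$ has $m_j$ in column $j$ and $-m_l$... I need to be careful, but the upshot is that $\Om_{jl} = -q\,\Om_{jl}^M$ as operators on $\F_p^n$, up to checking signs, because each entry $m_\bullet$ of $\Om_{jl}$ matches $-q$ times the corresponding entry $M_\bullet$ of $\Om_{jl}^M$ (using $qM_\bullet \equiv -m_\bullet \pmod p$) \emph{provided} the sign patterns of the two matrices are negatives of one another --- and indeed comparing \eqref{Om_ij_reduced} with \eqref{Om_ij_M} shows the off-diagonal/diagonal signs are exactly flipped. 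Hence $\sum_{l>j}\Om_{jl} = -q\,\Om_j^M$, and the equation $\sum_{l>j}\Om_{jl}C = q\,d_j\,C$ becomes $-q\,\Om_j^M C = q\,d_j\, C$; cancelling the nonzero scalar $q$ and negating gives $\Om_j^M C = -d_j\, C$.

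The only subtlety I anticipate is the sign: the corollary states $\Om_j^M C = d_j\, C$, not $-d_j\,C$. I would resolve this by re-examining the entry-by-entry comparison of \eqref{Om_ij_reduced} and \eqref{Om_ij_M} and the precise form of the relation $M_i \equiv -m_i/q$; the matrices $\Om_{jl}$ and $\Om_{jl}^M$ are designed precisely so that $\Om_{jl} = q\,\Om_{jl}^M$ in $\F_p$ (the sign flip in the matrix entries compensates the minus sign in $M_i \equiv -m_i/q$, since $m_i \equiv -qM_i$). With that identification $\sum_{l>j}\Om_{jl} C = q\,d_j\, C$ reads $q\,\Om_j^M C = q\,d_j\,C$, and dividing by $q$ yields exactly $\Om_j^M C = d_j\, C$. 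So the main (and essentially only) obstacle is bookkeeping the signs in the two matrix displays and in $M_i \equiv -m_i/q \pmod p$ consistently; once that is pinned down, the corollary is an immediate substitution, and I would present it in two or three lines: reduce the first equation using $qM_j \equiv -m_j$, identify $\sum_{l>j}\Om_{jl}$ with $q\,\Om_j^M$ in $\F_p$, cancel the invertible scalar $q$, and note the third condition is unchanged.
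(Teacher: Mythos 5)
Your proposal is correct and is exactly the argument the paper intends (the corollary is stated with no written proof precisely because it follows from Lemma \ref{lem le co} by the substitution $m_j\equiv -qM_j \pmod p$, which gives $\Om_{jl}=q\,\Om_{jl}^M$ in $\F_p$ and lets you cancel the invertible scalar $q$). Your initial sign worry is resolved the right way: the flipped sign pattern in \eqref{Om_ij_M} compensates the minus sign in $M_i\equiv -m_i/q$, so the final identities $\sum_j M_jC_j=0$ and $\Om_j^M C=d_jC$ come out as stated.
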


\begin{thm}
\label{lem C0}
Let  a pair $C= (C_1,\dots,C_n),\, (d_1,\dots,d_n)\in \F_p^n$
be a  solution of system
\bean
\label{eqn leq}
\sum_{j=1}^n M_jC_j =0, \quad 
\Om_{j}^M C \,=\, d_j\, C\,,\quad j=1, \dots, n-1\,, \quad d_n=0\,.
\eean
 Let the index $i$ be such that 
\bean
\label{ass C}
C_i
&\ne&
 0 \quad\on{and}\quad C_j=0,
\quad j=1,\dots,i-1.
\eean
 Then
\bean
\label{l pre}
d_j
&=&
M_j\,,\qquad  j=1,\dots, i-1\,,
\\
\notag
d_i  
&=&  \sum_{j=i}^n M_j\,,\qquad d_i\ne M_i\,,
\\
\notag
d_j
&=&
0\,,\qquad j=i+1,\dots, n\,,
\\
\label{3.8}
\phantom{aaa}
\sum_{l=i+1}^n\!M_l
&\ne& 0, \qquad
C_j \,= \,-\, \frac{M_i}{\sum_{l=i+1}^n\!M_l}\, C_i\,,  \qquad j=i+1,\dots,n\,.
\eean
Conversely, if a pair $C= (C_1,\dots,C_n),\, (d_1,\dots,d_n)\in \F_p^n$
has properties  \eqref{ass C},  \eqref{l pre}, \eqref{3.8},
then it is a solution  of system \eqref{eqn leq}.

\end{thm}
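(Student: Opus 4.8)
The statement is an explicit solution of the linear system \eqref{eqn leq}, so the plan is to exploit the index $i$ from \eqref{ass C} to decouple the equations. First I would record what $\Om_j^M C$ looks like: since $C_1=\dots=C_{i-1}=0$, for $j<i$ the $j$-th equation $\Om_j^M C = d_j C$ reads, in its $j$-th coordinate, $\bigl(\sum_{l=j+1}^n M_l\bigr) C_j - \sum_{l=j+1}^n M_l C_l$ on the left (the diagonal term $M_l$ acting on $C_j=0$ contributes nothing, and the off-diagonal $-M_j$-entries sit in rows $l>j$), so the $j$-th coordinate equation becomes $-\sum_{l=j+1}^n M_l C_l = d_j\cdot 0 = 0$; meanwhile the $i$-th coordinate of the same equation gives $-M_j C_i = d_j C_i$ wait---one must be careful which matrix entry falls in row $i$. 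I would instead argue inductively on $j=1,\dots,i-1$: assuming $C_1=\dots=C_{j-1}=0$ (vacuous at $j=1$) and $d_1=M_1,\dots,d_{j-1}=M_{j-1}$, look at the $j$-th coordinate of $\Om_j^M C = d_j C$. Its left side is $\sum_{l>j}M_l C_j - \sum_{l>j}M_l C_l = -\sum_{l>j}M_l C_l$, and since $C_1=\dots=C_{i-1}=0$ this is $-\sum_{l=i}^n M_l C_l$; from the constraint $\sum_k M_k C_k=0$ this equals $\sum_{k<i}M_k C_k = 0$, so the $j$-th coordinate equation is $0 = d_j C_j$. That alone does not force $C_j=0$, so instead I would read off $C_j$ from a \emph{higher} coordinate equation: for a row index $l$ with $j<l<i$ the $l$-th coordinate of $\Om_j^M C=d_j C$ is $-M_j C_j + M_j C_l = d_j C_l$, i.e. $-M_j C_j = d_j C_l - M_j C_l = (d_j-M_j)C_l$; and the $i$-th coordinate is $-M_j C_j = (d_j-M_j)C_i$ --- wait, that row has entry $-M_j$ in column $j$ and $M_j$ in column $l=i$? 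No: $\Om_{jl}^M$ has the $-M_j$ in row $l$, column $j$. So for the single matrix $\Om_{jl}^M$ with $l=i$, the $i$-th row reads $-M_j C_j + M_j C_i$; summing over all $l>j$, the $i$-th coordinate of $\Om_j^M C$ is (diagonal part $\sum_{l>j,\,l\ne i}M_l\cdot 0 + M_i\cdot$[the $M_j$ from $\Om_{ji}^M$]) --- this bookkeeping is the fiddly part and I would just do it carefully once.

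\textbf{The clean route.} Rather than wading through every entry, I would split the sum $\Om_j^M C = \Om_{j,\,<i}^M C + \Om_{j,\,\ge i}^M C$ where the first group involves only columns/rows among $\{j+1,\dots,i-1\}$, annihilating $C$ there, and the second group couples index $j$ to indices $\ge i$. This makes the $j$-th coordinate equation (for $j<i$) reduce to $\bigl(\sum_{l\ge i}M_l\bigr)C_j - \sum_{l\ge i}M_l C_l = d_j C_j$, and the $r$-th coordinate for $r\ge i$ reduce to $-M_j C_j + M_j C_r = d_j C_r$, i.e. $M_j(C_r - C_j) = d_j C_r$. Taking $r=i$: $M_j(C_i - C_j) = d_j C_i$. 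Combined with the $j$-th coordinate equation and the global constraint $\sum M_k C_k=0$ (which, given $C_k=0$ for $k<i$, forces $\sum_{l\ge i}M_l C_l=0$, hence the $j$-coordinate equation becomes simply $d_j C_j=0$), I can solve: if $C_j\ne 0$ for some $j<i$ we would contradict minimality of $i$, so necessarily... no, $d_j C_j=0$ with $C_j$ possibly nonzero is allowed unless we also use $M_j(C_i-C_j)=d_j C_i$. Substituting $d_j C_j = 0$: if $d_j=0$ then $M_j(C_i - C_j)=0$ so $C_j = C_i \ne 0$, contradicting $j<i$ and the choice of $i$; hence $C_j = 0$ for all $j<i$ is forced, and then $M_j(C_i-0)=d_j C_i$ with $C_i\ne0$ gives $d_j = M_j$. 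This establishes the first line of \eqref{l pre}.

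\textbf{Equations indexed $j=i,\dots,n-1$ and conclusion.} With $C_1=\dots=C_{i-1}=0$ known, I turn to equation $j=i$: its $i$-th coordinate gives $\bigl(\sum_{l>i}M_l\bigr)C_i - \sum_{l>i}M_l C_l = d_i C_i$, and its $r$-th coordinate ($r>i$) gives $M_i(C_r - C_i) = d_i C_r$. For $i<j\le n-1$ the equations $\Om_j^M C = d_j C$ involve only indices $\ge i$ and, I claim, force $d_j=0$: indeed the global constraint now reads $\sum_{l\ge i}M_l C_l=0$, so the $j$-th coordinate equation collapses to $-\sum_{l>j}M_l C_l = d_j C_j$ while an auxiliary coordinate yields the relation pinning $C_l$ for $l>i$ to be all equal (to a common value $C$, say), whence $\sum_{l>i}M_l C_l = C\sum_{l>i}M_l$; then the constraint gives $M_i C_i + C\sum_{l>i}M_l = 0$, i.e. $C = -M_i C_i/\sum_{l>i}M_l$ (which in particular forces $\sum_{l>i}M_l\ne 0$, else $M_i C_i=0$, impossible), giving \eqref{3.8}. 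Plugging back, the $j=i$ coordinate equation becomes $d_i C_i = \sum_{l>i}M_l\cdot C_i - C\sum_{l>i}M_l = \sum_{l>i}M_l C_i + M_i C_i = \bigl(\sum_{l\ge i}M_l\bigr)C_i$, so $d_i = \sum_{l=i}^n M_l$; the inequalities $d_i\ne M_i$ and $\sum_{l>i}M_l\ne0$ have just been derived, and $d_j=0$ for $j>i$ follows from the same $M_j(C_r-C_j)=d_j C_r$ relation with all $C_r$ equal and nonzero. The converse is a direct substitution: given a pair with the listed shape, one checks each of the $n-1$ matrix equations and the constraint hold, using $\sum_{l\ge i}M_l C_l = 0$ by construction. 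The main obstacle is purely the entry-by-entry verification of what $\Om_j^M C$ is in each coordinate range --- once that dictionary is written down, every implication above is a one-line linear-algebra deduction; I would present the dictionary as a short displayed computation and then run the argument above.
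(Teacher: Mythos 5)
Your proposal is correct and takes essentially the same route as the paper: compute the coordinates of $\Om_j^M C$ using $C_1=\dots=C_{i-1}=0$ together with the constraint $\sum_l M_lC_l=0$, read off $d_j=M_j$ ($j<i$) from the $i$-th coordinate, then from the $j=i$ equation get $d_i=\sum_{l\ge i}M_l$, the equality of all $C_r$ for $r>i$, the nonvanishing of $\sum_{l>i}M_l$ and $d_i\ne M_i$, hence $d_j=0$ for $j>i$, with the converse checked by direct substitution. The only blemish is the redundant detour that purports to "force" $C_j=0$ for $j<i$, which is already part of hypothesis \eqref{ass C}; it does not affect the validity of the argument.
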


\noindent
\begin{proof}
For any $j=1,\dots,n-1$, we have

\bean
\label{OC}
&&
\Om_j^M (C_1,\dots,C_n)=(0,\dots,0,  
\\
\notag
&&
\phantom{aaa}
M_{j+1}(C_j-C_{j+1}) +\dots+M_n(C_j-C_n), M_j(C_{j+1}-C_j), \dots,
M_j(C_n-C_j))\,,
\eean

\vsk.2>
\noindent
where in the right-hand side\ 0 is repeated $j-1$-times,

Assume \eqref{eqn leq} and \eqref{ass C}. First we check that 
$d_j = M_j$, $j=1,\dots, i-1$.  Indeed,
\bean
\label{OC_j}
\Om^M_jC 
&=&
\Big(0,\,\dots, \,0,\, -\sum_{l=j+1}^nM_l C_l,\, M_{j}C_{j+1}\,,\,\dots,\,  M_{j}C_{n}\Big)\,.
\\
&=&
\Big(0,\,\dots, \,0,\, \phantom{aaaa} 0 \phantom{aaaa}, \, M_{j}C_{j+1}\,,\,\dots,\,  M_{j}C_{n}\Big)\, = M_j\,C\,.
\eean
Here we used that $\sum_{l=j+1}^nM_l C_l = \sum_{l=1}^nM_l C_l = 0$.
Hence if  $\Om^M_jC = d_jC$, the  $d_j=M_j$.

We also have
\bean
\label{OC_i}
\phantom{aa}
\Om^M_iC 
&=&
(0,\dots, 0, \,\sum_{l=i+1}^nM_l(C_i- C_l),\, M_{i}(C_{i+1}-C_i),\dots,  M_{j}(C_{n}-C_i))
\\
\notag
&=&
 (0,\dots, 0,\, \phantom{aaa}
 C_i\sum_{l=i}^nM_l\phantom{aaa}  , 
 M_{i}(C_{i+1}-C_i),\,\dots,  M_{j}(C_{n}-C_i))=d_i C\,.
\eean
Hence
\bean
\label{3.12}
d_i =\sum_{l=i}^nM_l\,,\qquad 
 M_{i}(C_{j}-C_i) = C_j\sum_{l=i}^nM_l \,,
\qquad j=i+1,\dots,n\,.
\eean
The second equality  in \eqref{3.12} implies $-M_{i}C_i = C_j\sum_{l=i+1}^nM_l$.
Hence $\sum_{l=i+1}^nM_l\ne 0$. This inequality and the first equality in \eqref{3.12} imply 
$d_i\ne M_i$. Also the second equality in \eqref{3.12} implies that  $C_{i+1}=\dots=C_n$ and,
therefore,
$\Om^M_j C=0$ for $j>i$. Hence $d_j=0$ for $j=i+1,\dots, n-1$.
Thus we deduced \eqref{l pre} and \eqref{3.8}.

The proof that \eqref{ass C}, \eqref{l pre}, \eqref{3.8} imply \eqref{eqn leq} is straightforward.
\end{proof}

\begin{cor}
\label{cor deg le}
Let  a pair $C= (C_1,\dots,C_n),\, (d_1,\dots,d_n)\in \F_p^n$
be a solution of system \eqref{eqn le} such that $C\ne 0$.
 Then
$\sum_{j=1}^n d_j = \sum_{j=1}^n M_j$ \,.
\qed

\end{cor}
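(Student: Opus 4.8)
The plan is to read off the claim directly from Theorem~\ref{lem C0}, which already computes all the $d_j$ explicitly in terms of the $M_j$ under the hypotheses \eqref{ass C}. First I would note that a solution $C\neq 0$ of system \eqref{eqn le} satisfies, in particular, $d_n\equiv 0\pmod p$; since $d_n$ is being regarded as an element of $\F_p$ (the leading monomial exponent mod $p$), we may replace the last condition by $d_n=0$ and apply Theorem~\ref{lem C0}. Because $C\neq 0$, there is a well-defined smallest index $i$ with $C_i\neq 0$ and $C_j=0$ for $j<i$, so hypothesis \eqref{ass C} holds.

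Now I would simply sum the values of $d_j$ given by \eqref{l pre}. By that theorem, $d_j=M_j$ for $j=1,\dots,i-1$, $d_i=\sum_{j=i}^n M_j$, and $d_j=0$ for $j=i+1,\dots,n$. Hence
\begin{equation*}
\sum_{j=1}^n d_j \;=\; \sum_{j=1}^{i-1} M_j \;+\; \sum_{j=i}^n M_j \;=\; \sum_{j=1}^n M_j\,,
\end{equation*}
which is exactly the asserted identity.

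There is essentially no obstacle here: the corollary is a one-line bookkeeping consequence of the already-proven Theorem~\ref{lem C0}. The only point requiring a word of care is the translation between the two forms of the last constraint — $d_n\equiv 0\pmod p$ in \eqref{eqn le} versus $d_n=0$ in \eqref{eqn leq} — but since the $d_j$ here are exponents viewed modulo $p$ (equivalently, elements of $\F_p$), this is immediate, and all the intermediate equalities $d_j=M_j$, $d_i=\sum_{l\geq i}M_l$, $d_j=0$ are equalities in $\F_p$, so summing them is legitimate. One might optionally remark that the computation also recovers the degree $\delta_l=\sum_j M_j-lp$ of the $\F_p$-hypergeometric solutions from \eqref{deg P} once one knows $\sum_j d_j$ is the degree modulo $p$ of the homogeneous leading component, but that is not needed for the proof itself.
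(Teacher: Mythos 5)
Your proof is correct and matches the paper's intent: the corollary is stated with an immediate \qed precisely because it follows by applying Theorem \ref{lem C0} (with $i$ the first index where $C_i\neq 0$, and $d_n\equiv 0$ read as $d_n=0$ in $\F_p$) and summing the explicit values of the $d_j$, exactly as you do.
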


\begin{cor}
\label{cor deg sol}
Let $ I(z)=(I_1(z),\dots, I_n(z))$ be a homogeneous polynomial solution of system \eqref{KZ} over $\F_p$
of degree  $d$, then
\bean
\label{deg pol sol}
d \,\equiv\, \sum_{j=1}^n M_j \ \ (\on{mod}\,p),
\eean
cf. Example \ref{ex1}.
\qed

\end{cor}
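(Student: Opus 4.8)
The plan is to reduce the statement to Corollary \ref{cor deg le}, which in turn rests on Theorem \ref{lem C0}. Suppose $I(z) = (I_1(z), \dots, I_n(z))$ is a nonzero homogeneous polynomial solution of \eqref{KZ} over $\F_p$ of degree $d$. First I would pass to the $\id$-leading term: by Corollary \ref{cor le co}, if $C z_1^{d_1}\cdots z_n^{d_n}$ is the $\id$-leading term of $I(z)$, then the pair $(C, (d_1,\dots,d_n))$ satisfies the system \eqref{eqn le}, namely $\sum_j M_j C_j = 0$, $\Om_j^M C = d_j C$ for $j=1,\dots,n-1$, and $d_n \equiv 0 \pmod p$. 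Since $I(z)$ is homogeneous of degree $d$, its leading monomial has total degree $d$, so $d_1 + \dots + d_n = d$ as integers.

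Next I would invoke Corollary \ref{cor deg le}. That corollary is stated for solutions of \eqref{eqn le} with $C \ne 0$, and it asserts $\sum_{j=1}^n d_j = \sum_{j=1}^n M_j$ — but here the $d_j$ are a priori elements of $\F_p$ (coming from the matrix eigenvalue equations $\Om_j^M C = d_j C$ read in $\F_p$), so the equality holds in $\F_p$. Meanwhile the actual exponents in the leading monomial are honest nonnegative integers whose reductions mod $p$ are the $d_j \in \F_p$ appearing in \eqref{eqn le}. Combining, $d = \sum_j (\text{integer exponents}) \equiv \sum_j d_j = \sum_j M_j \pmod p$, which is exactly \eqref{deg pol sol}.

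The one point that needs a little care — and is really the only obstacle — is the bookkeeping between integer exponents and their images in $\F_p$: Theorem \ref{lem C0} and Corollary \ref{cor deg le} work with $(d_1,\dots,d_n) \in \F_p^n$ and in particular replace the condition $d_n \equiv 0 \pmod p$ by $d_n = 0$ in $\F_p$, whereas the leading monomial of $I(z)$ has genuine integer exponents. So I would be explicit that the integer exponent $d_j$ of $z_j$ in the leading monomial reduces mod $p$ to the $\F_p$-element also called $d_j$ in \eqref{eqn le}, after which summing over $j$ and reducing mod $p$ gives the congruence. Everything else is immediate from the cited results, and the reference to Example \ref{ex1} simply illustrates that for the solution $(z_1-z_2)^2(1,-1)$ with $n=2$, $p=3$, $M_1=M_2=1$ one indeed has $\deg = 2 \equiv M_1 + M_2 \pmod 3$.
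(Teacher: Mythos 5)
Your proposal is correct and follows exactly the route the paper intends: the corollary is stated with no written proof because it is the immediate combination of Corollary \ref{cor le co} (the $\id$-leading term satisfies \eqref{eqn le}) with Corollary \ref{cor deg le} ($\sum_j d_j = \sum_j M_j$ in $\F_p$), plus the observation that homogeneity makes the leading monomial's integer exponents sum to $d$. Your explicit care about the integer exponents versus their reductions in $\F_p$ is the right (and only) bookkeeping point, so nothing is missing.
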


\begin{cor}
\label{cor number}

Let  a pair $C= (C_1,\dots,C_n),\, (d_1,\dots,d_n)\in \F_p^n$
be a solution of system \eqref{eqn le} such that $C\ne 0$.
Then $(d_1,\dots,d_n)$ is uniquely determined by $C$.
If $c\in \F_p^\times$, then the pair $cC, (d_1,\dots,d_n)$
is also a solution of system \eqref{eqn le}. The number of
equivalence classes of solutions $C, (d_1,\dots,d_n)$
 of system \eqref{eqn le}, modulo the equivalence relation $C\mapsto cC$,
 equals $n-1$ decreased by the number of indices $i$,
 $1\leq i\leq n-2$,  such that
$\sum_{j=i+1}M_j =0$.
\qed

\end{cor}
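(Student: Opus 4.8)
The plan is to read everything off from Theorem \ref{lem C0}, noting first that over $\F_p$ the condition $d_n\equiv 0\ (\on{mod}\,p)$ appearing in system \eqref{eqn le} is just $d_n=0$, so that \eqref{eqn le} and \eqref{eqn leq} are literally the same system.

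First I would establish uniqueness of $(d_1,\dots,d_n)$. Given a solution with $C\ne 0$, let $i$ be the smallest index with $C_i\ne 0$; this is exactly hypothesis \eqref{ass C}. One checks $i\le n-1$: if $i=n$, the relation $\sum_{j=1}^n M_jC_j=0$ would read $M_nC_n=0$, impossible since $0<M_n<p$ and $C_n\ne 0$. Then Theorem \ref{lem C0} gives the formulas \eqref{l pre} expressing each $d_j$ solely in terms of $i$ and $M_1,\dots,M_n$, so $(d_1,\dots,d_n)$ is determined by $C$. For the scaling statement, system \eqref{eqn le} is linear and homogeneous in the unknown $C$ with $(d_1,\dots,d_n)$ held fixed, so $(cC,(d_j))$ is again a solution for every $c\in\F_p^\times$; equivalently, properties \eqref{ass C}, \eqref{l pre}, \eqref{3.8} are invariant under $C\mapsto cC$, so the converse half of Theorem \ref{lem C0} applies directly.

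Finally I would count equivalence classes. Fix $i\in\{1,\dots,n-1\}$. By \eqref{3.8}, a solution whose smallest nonzero coordinate is the $i$-th can exist only when $\sum_{l=i+1}^nM_l\ne 0$; and when this holds, the converse part of Theorem \ref{lem C0} produces such a solution, with $C_j=0$ for $j<i$, $C_i$ an arbitrary nonzero scalar, and $C_j=-M_iC_i/\sum_{l=i+1}^nM_l$ for $j>i$ --- so $C$ is pinned down by $i$ up to the relation $C\mapsto cC$. Solutions with different smallest-nonzero index are plainly inequivalent, so the equivalence classes are in bijection with $\{\,i : 1\le i\le n-1,\ \sum_{l=i+1}^nM_l\ne 0\,\}$, a set of size $(n-1)$ minus $\#\{\,i : 1\le i\le n-1,\ \sum_{l=i+1}^nM_l= 0\,\}$. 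Since $\sum_{l=n}^nM_l=M_n\ne 0$, the value $i=n-1$ is never in the subtracted set, which yields the asserted count $(n-1)-\#\{\,i : 1\le i\le n-2,\ \sum_{l=i+1}^nM_l= 0\,\}$.

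The computation carries no real difficulty --- it is a careful repackaging of Theorem \ref{lem C0} --- and the only points needing attention are excluding the case $i=n$ and checking that the boundary index $i=n-1$ drops out of the correction term.
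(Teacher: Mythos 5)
Your argument is correct and is exactly the intended one: the paper states Corollary \ref{cor number} with no separate proof, treating it as an immediate consequence of Theorem \ref{lem C0}, and your write-up simply unpacks that — uniqueness of $(d_1,\dots,d_n)$ from \eqref{l pre}, scaling invariance from homogeneity, and the count from the existence criterion $\sum_{l=i+1}^n M_l\ne 0$ in \eqref{3.8}. The two boundary checks you flag (ruling out $i=n$ via $M_nC_n\ne 0$, and noting $i=n-1$ never enters the correction term) are the right details to make explicit.
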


\subsection{Example} Let
 $n=2$, $p=3$, $q=2$, $m_1=m_2=1$, $M_1=M_2=1$. 
 Then system \eqref{KZ} has
a (non-$\F_p$-hypergeometric) polynomial solution 
 $I(z) =(z_1-z_2)^2\begin{pmatrix}
\,\,\,\,1 
\\
-1     
\end{pmatrix}$, see Example \ref{ex1}.  The leading $\id$-term of $I(z)$ equals 
 $z_1^2\begin{pmatrix}
\,\,\,\,1 
\\
-1     
\end{pmatrix}$ in agreement with Theorem \ref{lem C0}.

 In this example  the module of all polynomial solutions is one-dimensional and generated by 
 $I(z)$.   This follows from the fact that  the $\id$-leading term of any polynomial solution
 $J(z)$  has the form
$cz_1^{a_1}z_2^{a_2}\cdot\,z_1^2\! \begin{pmatrix}
\,\,\,\,1 
\\
-1     
\end{pmatrix}$, where $c\in\F_p^\times$, $a_1,a_2\in p\Z$, by Theorem \ref{lem C0}.

\section{Leading term of an $\F_p$-hypergeometric solution }
\label{sec le co}

\subsection{Leading term of  $I^{[l]}(z)$}

Using the isomorphism $\iota$, defined in \eqref{iota}, we consider \linebreak
$\F_p$-hypergeometric solutions 
 as polynomials in $z$ with coefficients in $\Sing V[-2]$.

\vsk.2>

Recall that for $l=1,\dots,r$, the $\F_p$-hypergeometric solution $I^{[l]}(z)$ is a homogeneous polynomial in $z$
of degree  $\delta_{l} = \sum_{j=1}^n M_j - lp$.

We  describe the leading term $I^{[l]}_\id (z)$ of $I^{[l]}(z)$ with respect to  
  the lexicographical ordering $>_\id$.

\begin{thm}
\label{thm tle}

Given $l=1,\dots,r$, denote by  $i=i(l)$   the unique positive integer such that
\bean
\label{i(l)}
0\,\leq\, \sum_{j=i}^n M_{j}\,-\,lp\, < M_{i}\,.
\eean
Then
\bean
\label{le term}
I^{[l]}_\id(z)
&=&
\,(-1)^{\sum_{j=1}^iM_j}\,\frac{\Ga_{\F_p}(M_i+1)\,\Ga_{\F_p}\!\left(\sum_{j=i+1}^nM_j-(l-1)p+1\right)}
{\Ga_{\F_p}\!\left(\sum_{j=i}^nM_j-lp+1\right)}
\\
\notag
&\times&
\left( \frac {\sum_{j= i+1}^n f^{(j)}v}{\sum_{j= i+1}^n M_j}-\frac{f^{(i)}v}{M_{i}}\right) 
z_{1}^{M_{1}}z_{2}^{M_{2}}\dots z_{{i-1}}^{M_{{i-1}}}
z_{i}^{\sum_{j=i}^n M_{j}\,-\,lp}\,,
\eean
where $f^{(k)}v$ are introduced in \eqref{f^i}.

\end{thm}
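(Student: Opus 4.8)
The plan is to compute the $\id$-leading term of $I^{[l]}(z)$ directly from the explicit coefficient formula in Lemma \ref{lem coef}, just as Theorem \ref{thm beta} was proved by a direct computation for $n=2$. First I would recall that $I^{[l]}_{d_1,\dots,d_n}\ne0$ exactly when $\sum d_i=\dl_l$ and $0\le d_i\le M_i$ for all $i$. Among all such index tuples, the $\id$-largest monomial $z_1^{d_1}\dots z_n^{d_n}$ is found greedily: take $d_1$ as large as possible, i.e. $d_1=M_1$, provided that is consistent with $\sum d_i=\dl_l$ and the remaining constraints $0\le d_j\le M_j$; then $d_2=M_2$, and so on, until we reach an index where we can no longer take the full value $M_i$. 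The definition of $i=i(l)$ via \eqref{i(l)} is precisely the statement that $d_1=M_1,\dots,d_{i-1}=M_{i-1}$ is forced and feasible, that the residual degree $\sum_{j=i}^nM_j-lp$ is strictly less than $M_i$ (so $d_i$ cannot equal $M_i$), and that this residual is nonnegative (so a valid completion exists). Hence the leading monomial is $z_1^{M_1}\dots z_{i-1}^{M_{i-1}}z_i^{\sum_{j=i}^nM_j-lp}$ with $d_{i+1}=\dots=d_n=0$. This matches the last line of \eqref{le term}; I would also note that $\sum_{j=i+1}^nM_j-(l-1)p=\bigl(\sum_{j=i}^nM_j-lp\bigr)+p-M_i>0$ since the first summand is $\ge0$ and $p-M_i>0$, so the $\Ga_{\F_p}$ argument makes sense, and similarly $\sum_{j=i}^nM_j-lp+1$ lies in the admissible range for Lemma \ref{lem ga} after reindexing.

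Next I would plug $(d_1,\dots,d_n)=(M_1,\dots,M_{i-1},\sum_{j=i}^nM_j-lp,0,\dots,0)$ into formula \eqref{Coe}. The binomial factors $\binom{M_j}{d_j}$ equal $1$ for $j<i$ (since $d_j=M_j$) and for $j>i$ (since $d_j=0$), so only $\binom{M_i}{d_i}$ with $d_i=\sum_{j=i}^nM_j-lp$ survives. The vector factor $\bigl(1-\frac{d_1}{M_1},\dots,1-\frac{d_n}{M_n}\bigr)$ has zero in coordinates $1,\dots,i-1$, value $1-\frac{d_i}{M_i}$ in coordinate $i$, and value $1$ in coordinates $i+1,\dots,n$. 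Applying the isomorphism $\iota$ of \eqref{iota}, this vector becomes $\bigl(1-\frac{d_i}{M_i}\bigr)f^{(i)}v+\sum_{j=i+1}^nf^{(j)}v$. Using $1-\frac{d_i}{M_i}=\frac{M_i-d_i}{M_i}=\frac{lp-\sum_{j=i+1}^nM_j}{M_i}=-\frac{\sum_{j=i+1}^nM_j}{M_i}$ in $\F_p$, I can factor out $\sum_{j=i+1}^nM_j$ to write the vector part as $\bigl(\sum_{j=i+1}^nM_j\bigr)\bigl(\frac{\sum_{j=i+1}^nf^{(j)}v}{\sum_{j=i+1}^nM_j}-\frac{f^{(i)}v}{M_i}\bigr)$, which is the parenthesized expression in \eqref{le term} up to the scalar $\sum_{j=i+1}^nM_j$; note this scalar is nonzero by Lemma \ref{lem am ine} (or Theorem \ref{lem C0}), so the division is legitimate.

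It then remains to identify the scalar $(-1)^{\dl_l}\binom{M_i}{d_i}\cdot\bigl(\sum_{j=i+1}^nM_j\bigr)$ with the claimed $\Ga_{\F_p}$-ratio $(-1)^{\sum_{j=1}^iM_j}\Ga_{\F_p}(M_i+1)\Ga_{\F_p}\!\bigl(\sum_{j=i+1}^nM_j-(l-1)p+1\bigr)/\Ga_{\F_p}\!\bigl(\sum_{j=i}^nM_j-lp+1\bigr)$. Set $A=M_i$ and observe that $d_i=\sum_{j=i}^nM_j-lp=M_i-\bigl(lp-\sum_{j=i+1}^nM_j\bigr)$; writing $B'=\sum_{j=i+1}^nM_j-(l-1)p$ one checks $A+B'-p=d_i$ and $B'=\bigl(\sum_{j=i+1}^nM_j\bigr)-(l-1)p$. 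The factor $\bigl(\sum_{j=i+1}^nM_j\bigr)\binom{M_i}{d_i}$ should be massaged into the form $B'\binom{B'-1}{A+B'-p}$ (using that $\sum_{j=i+1}^nM_j\equiv B'$ and $\binom{M_i}{d_i}=\binom{A}{A+B'-p}=\binom{A}{p-B'}$, with a small Lucas/Vandermonde-type identity to match $\binom{B'-1}{p-A-1}$-type shapes); then Lemma \ref{lem ga} converts it to $(-1)^{A}\Ga_{\F_p}(A+1)\Ga_{\F_p}(B'+1)/\Ga_{\F_p}(A+B'-p+1)=(-1)^{M_i}\Ga_{\F_p}(M_i+1)\Ga_{\F_p}(B'+1)/\Ga_{\F_p}(d_i+1)$. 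Finally, tracking the sign: $(-1)^{\dl_l}(-1)^{M_i}$ must be shown equal to $(-1)^{\sum_{j=1}^iM_j}$, which follows since $\dl_l=\sum_{j=1}^nM_j-lp\equiv\sum_{j=1}^nM_j-l$ and $\dl_l+M_i\equiv\bigl(\sum_{j=1}^{i-1}M_j+2M_i+\sum_{j=i+1}^nM_j\bigr)-lp\equiv\sum_{j=1}^iM_j\pmod2$ after absorbing the even contributions and the parities of $lp$; I will verify this parity bookkeeping carefully since it is the one place where an off-by-one is easy to make. The main obstacle I anticipate is precisely this last step: rewriting $\bigl(\sum_{j=i+1}^nM_j\bigr)\binom{M_i}{d_i}$ into the exact binomial shape required by Lemma \ref{lem ga} and nailing down the sign exponent, which requires being careful about the distinction between integers and their residues mod $p$ (and mod $2$) throughout. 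Everything else is a routine substitution into \eqref{Coe}.
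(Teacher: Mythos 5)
Your overall route is essentially the paper's: the paper proves the theorem by combining formula \eqref{Coe} with the identification of the leading monomial (via Corollary \ref{cor le co} and Theorem \ref{lem C0}) and then applying Lemma \ref{lem ga} with $A=M_i$, $B=\sum_{j=i+1}^nM_j-(l-1)p$, exactly the ingredients you use. Your greedy determination of the $\id$-leading monomial directly from the support description in Lemma \ref{lem coef} is a perfectly good (arguably more self-contained) substitute for the detour through Theorem \ref{lem C0}, and your extraction of the vector part, including $1-\tfrac{d_i}{M_i}=-\tfrac{\sum_{j>i}M_j}{M_i}$, is correct. (Small quibble: you cannot invoke Lemma \ref{lem am ine} here, since ample reduction is not assumed in this theorem; but your own inequalities $0<B<p$ already give $\sum_{j>i}M_j\equiv B\not\equiv 0\pmod p$, so nothing is lost.)

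The genuine problem is in the last paragraph, the step you yourself flag as the crux: both of your stated identities are false as written, and only their product is true. First, $\binom{M_i}{d_i}$ is \emph{not} congruent to $\binom{B-1}{d_i}$ mod $p$; since $B-1\equiv d_i-M_i-1\pmod p$ and $\binom{y}{d_i}\bmod p$ depends only on $y\bmod p$ (for $d_i<p$), upper negation gives $\binom{B-1}{d_i}\equiv(-1)^{d_i}\binom{M_i}{d_i}$, so the rewriting of $\bigl(\sum_{j>i}M_j\bigr)\binom{M_i}{d_i}$ as $B\binom{B-1}{A+B-p}$ is off by $(-1)^{d_i}$. Second, the parity claim $\dl_l+M_i\equiv\sum_{j=1}^iM_j\pmod 2$ is also false in general (your justification treats $\sum_{j>i}M_j-lp$ as even, which it need not be). A concrete instance: $n=2$, $p=7$, $q=3$, $m_1=m_2=2$, so $M_1=M_2=4$, $l=1$, $i=1$, $d_1=1$, $B=4$; then $\binom{4}{1}=4\not\equiv\binom{3}{1}=3\pmod 7$, and $\dl_1+M_1=5$ is odd while $\sum_{j\le 1}M_j=4$ is even. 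Both discrepancies equal $(-1)^{d_i}$ and cancel, which is why your final scalar agrees with \eqref{le term}; but as a proof each of the two steps would fail verification. The repair is easy: use $\binom{M_i}{d_i}\equiv(-1)^{d_i}\binom{B-1}{d_i}$, after which the sign to check becomes $\dl_l+d_i+M_i\equiv\sum_{j=1}^iM_j\pmod 2$, and this does hold, since $\dl_l+d_i=\sum_{j=1}^nM_j+\sum_{j=i}^nM_j-2lp\equiv\sum_{j=1}^{i-1}M_j\pmod 2$. With these corrections your argument closes and coincides in substance with the paper's proof.
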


\begin{proof}
The theorem follows from formula \eqref{Coe}, Corollary \ref{cor le co}, Theorem  \ref{lem C0} and 
Lemma \ref{lem ga},
 where Lemma \ref{lem ga} is applied with $A=M_i$ and $B= \sum_{j=i+1}^nM_j-(l-1)p$.
\end{proof}

\begin{rem}
Let $\si=(\si_1,\dots,\si_n)\in S_n$. The leading term $I^{[l]}_\si (z)$ of $I^{[l]}(z)$ with respect to  
  the lexicographical ordering $>_\si$ is obtained from the right-hand side of formula \eqref{le term}
by  simultaneous reordering
of variables $z_1,\dots,z_n$ and parameters $M_1,\dots,M_n$.
\end{rem}

\subsection{Indices $i(l)$}

In Theorem \ref{thm tle} we define the numbers $i(l)$, $l=1,\dots,r$. The leading term 
$I^{[l]}_\id(z)$ of the $\F_p$-hypergeometric solution $I^{[l]}(z)$
is a monomial in $z_1,\dots, z_{i(l)}$\,.

\begin{lem}
\label{lem length}
We have
\bean
\label{lengths}
1\leq i(r) < i(r-1) < \dots < i(1)\, < n\,.
\eean
\qed
\end{lem}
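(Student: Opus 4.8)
The plan is to read off the monotonicity of the indices $i(l)$ directly from their defining inequality \eqref{i(l)}, together with the fact that the differences $\delta_l - \delta_{l+1} = p$ are constant while all $M_j$ satisfy $0 < M_j < p$. First I would record the equivalent formulation: $i(l)$ is the unique index such that
\[
\sum_{j=i(l)}^n M_j \;>\; lp \;\ge\; \sum_{j=i(l)+1}^n M_j .
\]
(The left inequality is $0 \le \sum_{j=i(l)}^n M_j - lp$ rewritten using $M_{i(l)} > 0$, and the right inequality is $\sum_{j=i(l)}^n M_j - lp < M_{i(l)}$.) In other words, $i(l)$ is characterized as the smallest index $i$ for which $\sum_{j=i}^n M_j > lp$; equivalently $i(l)-1$ is the largest index $i$ with $\sum_{j=i}^n M_j \le lp$, where by convention the empty sum (at $i=n+1$) is $0$.

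Next I would prove strict monotonicity. Fix $l$ with $1 \le l \le r-1$. Since $l+1 \ge l$ and the partial tail sums $\sum_{j=i}^n M_j$ are nonincreasing in $i$, the threshold for ``$> (l+1)p$'' is harder to meet than the threshold for ``$> lp$'', so $i(l+1) \ge i(l)$; thus the sequence is at least weakly decreasing in $l$. For strictness, suppose $i(l+1) = i(l) =: i$. Then from the characterization applied at level $l$ we have $\sum_{j=i}^n M_j > lp$, and from level $l+1$ we have $\sum_{j=i+1}^n M_j \le (l+1)p$ and $\sum_{j=i}^n M_j > (l+1)p$. But then
\[
\sum_{j=i}^n M_j \;=\; M_i + \sum_{j=i+1}^n M_j \;\le\; M_i + (l+1)p \;<\; p + (l+1)p,
\]
which is automatic and gives nothing; instead I combine $\sum_{j=i}^n M_j > (l+1)p$ with $M_i < p$ to get $\sum_{j=i+1}^n M_j = \sum_{j=i}^n M_j - M_i > (l+1)p - p = lp$. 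This says the tail starting at $i+1$ already exceeds $lp$, so the minimal index achieving ``$>lp$'' is at most $i+1 \ne i$ — wait, that is consistent with $i(l) \le i$. The clean contradiction comes the other way: I would instead show that $i(l+1) = i(l)$ forces $\sum_{j=i(l)+1}^n M_j$ to simultaneously be $\le (l+1)p$ and $> lp$ while $\sum_{j=i(l)}^n M_j > (l+1)p$, and then $M_{i(l)} = \sum_{j=i(l)}^n M_j - \sum_{j=i(l)+1}^n M_j > (l+1)p - (l+1)p = 0$ and also $< p$; to get a genuine contradiction I pass to level $l$'s right inequality $\sum_{j=i(l)+1}^n M_j \le lp$, which directly contradicts $\sum_{j=i(l)+1}^n M_j > lp$ derived above. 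So $i(l+1) < i(l)$, proving $i(r) < i(r-1) < \dots < i(1)$.

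Finally I would check the two boundary bounds. The inequality $i(r) \ge 1$ is immediate since indices are positive integers by definition. The inequality $i(1) < n$, equivalently $i(1) \le n-1$, follows from the characterization: $i(1)$ is the smallest $i$ with $\sum_{j=i}^n M_j > p$, and taking $i = n-1$ we have $\sum_{j=n-1}^n M_j = M_{n-1} + M_n$; I claim this already exceeds $p$ whenever $r \ge 1$. Indeed $r = \big[\sum_{j=1}^n M_j / p\big] \ge 1$ means $\sum_{j=1}^n M_j \ge p$, but this alone does not force $M_{n-1}+M_n > p$. The correct argument: since there are $r \ge 1$ valid solutions $I^{[1]}, \dots, I^{[r]}$ and each $i(l)$ is a well-defined positive integer $\le n$, and the strictly decreasing chain has $r$ terms, we need $i(1) \ge i(r) + (r-1) \ge 1 + (r-1) = r$; on the other end $i(1) \le n$. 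So it remains to rule out $i(1) = n$. If $i(1) = n$ then the tail sum at $n$ satisfies $M_n > p$, contradicting $M_n < p$ from \eqref{M_i}. Hence $i(1) \le n-1 < n$, completing the proof.

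The main obstacle is organizational rather than deep: the defining condition \eqref{i(l)} mixes a weak and a strict inequality, and one must consistently translate it into the ``smallest index with tail sum $> lp$'' form and track which inequality ($0 < M_j$ versus $M_j < p$) is used at each step — in particular the strictness of the chain relies on $M_j < p$ for the jump of size $p$ between consecutive levels, whereas the top bound $i(1) < n$ relies on $M_n < p$ directly.
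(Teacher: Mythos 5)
Your underlying arithmetic idea --- the jump of $p$ between consecutive levels against $0<M_j<p$ --- is exactly what makes the lemma true (the paper leaves the proof to the reader), but your write-up has a genuine flaw in the step that orders the indices. The translation of \eqref{i(l)} is wrong on both sides: the defining condition gives
\[
\sum_{j=i(l)}^n M_j \;\ge\; lp \;>\; \sum_{j=i(l)+1}^n M_j\,,
\]
weak on the left and strict on the right, not the other way around; $M_{i(l)}>0$ does not upgrade $\ge$ to $>$ (the case $\sum_{j=i(l)}^n M_j = lp$ is perfectly allowed by \eqref{i(l)}). Worse, your reformulations reverse the monotonicity of the tail sums: since $\sum_{j=i}^n M_j$ is strictly decreasing in $i$, the ``smallest index $i$ with $\sum_{j=i}^n M_j > lp$'' is $i=1$ whenever any index qualifies, and the ``largest index with $\sum_{j=i}^n M_j \le lp$'' is $i=n$; neither equals $i(l)$ or $i(l)-1$. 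The correct characterization is that $i(l)$ is the \emph{largest} index whose tail sum is $\ge lp$.

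This matters because the conclusion $i(l+1)<i(l)$ needs the weak comparison $i(l+1)\le i(l)$ in addition to $i(l+1)\ne i(l)$. You assert the weak comparison in the wrong direction ($i(l+1)\ge i(l)$, which would make the sequence weakly increasing in $l$ and is incompatible with your own conclusion) and justify it from the false characterization, so as written your argument only yields $i(l+1)\ne i(l)$. The repair is short: with the correct characterization, raising the threshold from $lp$ to $(l+1)p$ can only lower the largest admissible index, so $i(l+1)\le i(l)$; alternatively, note that $\sum_{j=i(l+1)+1}^n M_j \ge (l+1)p - M_{i(l+1)} > lp > \sum_{j=i(l)+1}^n M_j$, and the strict decrease of tail sums then forces $i(l+1)<i(l)$ in one stroke. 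Your contradiction computation (subtracting the two tails to get $M_i>p$, against $M_i<p$) and your boundary arguments ($i(r)\ge 1$ trivially, and $i(1)=n$ would force $M_n\ge p$, contradicting $M_n<p$ from \eqref{M_i}) are correct; the detour about $M_{n-1}+M_n>p$ is unnecessary.
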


\begin{cor}
\label{cor am i}
If system \eqref{KZ} has ample reduction for a prime $p$, then
\bean
\label{am ij}
i(l) = n - l, \qquad l=1,\dots, n-1.
\eean
\qed

\end{cor}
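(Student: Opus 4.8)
The plan is to combine Lemma \ref{lem length} with the ampleness hypothesis. Under ample reduction we have $r = n-1$ by definition \eqref{def am}, so the chain of inequalities in \eqref{lengths} becomes
\[
1 \leq i(n-1) < i(n-2) < \dots < i(1) < n.
\]
This is a strictly increasing sequence of $n-1$ integers, all lying in the range $\{1, 2, \dots, n-1\}$ (the lower bound $1$ and strict upper bound $n$ are exactly the endpoints). A strictly increasing sequence of $n-1$ integers confined to a set of exactly $n-1$ integers must be that set listed in order; hence $i(n-1)=1$, $i(n-2)=2$, and in general $i(l) = n-l$ for $l = 1, \dots, n-1$. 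This is a counting (pigeonhole) argument and requires no computation beyond reading off cardinalities.

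As an independent sanity check, and the step I would actually write out in case one wants a self-contained argument not routed through Lemma \ref{lem length}, I would verify \eqref{am ij} directly from the defining inequality \eqref{i(l)} for $i(l)$. Recall $i(l)$ is characterized by $0 \leq \sum_{j=i(l)}^n M_j - lp < M_{i(l)}$. To show $i(l) = n-l$ it suffices to check that $i = n-l$ satisfies these two inequalities. The lower inequality $\sum_{j=n-l}^n M_j \geq lp$ and the upper inequality $\sum_{j=n-l}^n M_j - lp < M_{n-l}$, i.e. $\sum_{j=n-l+1}^n M_j < lp$, are precisely the two bounds furnished by Lemma \ref{lem am ine} applied to the subset $I = \{n-l, n-l+1, \dots, n\}$ of size $l+1$ (giving $lp < \sum_{j \in I} M_j$, hence $\sum_{j=n-l}^n M_j > lp$, slightly stronger than needed) and to the subset $I = \{n-l+1, \dots, n\}$ of size $l$ (giving $\sum_{j=n-l+1}^n M_j < lp$). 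Since by Corollary \ref{cor number} the index $i(l)$ is the \emph{unique} integer satisfying \eqref{i(l)}, this forces $i(l) = n-l$.

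I do not anticipate a genuine obstacle here: the statement is a bookkeeping consequence of results already in hand. The only point needing a word of care is the endpoint count in the pigeonhole step — one must note that the chain \eqref{lengths} has strict inequalities at both ends ($1 \le i(r)$ and $i(1) < n$), so that all $n-1$ values are squeezed into the $n-1$ slots $1, \dots, n-1$, leaving no freedom. I would present the pigeonhole argument as the main proof and perhaps relegate the direct verification via Lemma \ref{lem am ine} to a parenthetical remark.
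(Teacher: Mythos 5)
Your proof is correct and follows essentially the same route the paper intends: since ample reduction gives $r=n-1$, the strict chain of Lemma \ref{lem length} squeezes the $n-1$ values $i(n-1)<\dots<i(1)$ into $\{1,\dots,n-1\}$, forcing $i(l)=n-l$. One tiny remark on your alternative verification: the uniqueness of the index satisfying \eqref{i(l)} is already built into the statement of Theorem \ref{thm tle} (and is immediate from the inequalities themselves), so citing Corollary \ref{cor number} there is unnecessary and not quite the right reference.
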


\subsection{Corollary of Theorem \ref{thm tle}} 
\label{sec cor}

The $\F_p$-hypergeometric solutions $I^{[l]}(z)$,
$l=1,\dots,r$, are linearly independent over the field $\F_p(z)$ of rational functions in $z$ with coefficients in $\F_p$.

\vsk.2>
This follows from the fact that the leading coefficients
of $I^{[l]}(z)$,
$l=1,\dots,r$,  are linear independent over the field $\F_p$.

\subsection{Example} Let $q=3$, $p=13$, $(m_1,\dots,m_6)= (2,2,2,1,1,1)$. Then
$(M_1,\dots,M_6)= (8,8,8,4,4,4)$,
$r = \big[ \sum_j M_j/p\big] = 2$. We have two $\F_p$-hypergeometric solutions $I^{[l]}(z)$, $l=1,2$,
of degrees 23 and 10, respectively.

Consider 
 the identity element $\id=(1,2,3,4,5,6)$ of $S_6$  and the elements 
 $s_{3,4}=(1,2,4$, $3,5,6)$,   $\si=(6,5,4,3,2,1)\,\in\,S_6$.
  Then the leading terms are
\bea
I^{[1]}_{\id} (z) 
&=& 
-
\binom{8}{7}
\Big(\big(1 - \frac{7}{8}\big) f^{(3)}v +f^{(4)}v+f^{(5)}v+f^{(6)}v\Big) 
z_1^8z_2^8z_3^7\,, 
\\
I^{[1]}_{s_{3,4}} (z) 
&=& 
-
\binom{4}{3}
\Big(\big(1 - \frac{3}{4}\big) f^{(3)}v +f^{(5)}v+f^{(6)}v\Big) 
z_1^8z_2^8z_4^4z_3^3\,, 
\\
I^{[1]}_{\si} (z) 
&=& 
-
\binom{8}{3}
\Big(\big(1 - \frac{3}{8}\big) f^{(2)}v + f^{(1)}v\Big) 
z_6^4z_5^4z_4^4z_3^8z_2^3\,, 
\eea

\bea
I^{[2]}_{\id} (z) 
&=& 
\binom{8}{2}
\Big(\big(1 - \frac{2}{8}\big) f^{(2)}v +f^{(3)}v +f^{(4)}v+f^{(5)}v+f^{(6)}v\Big) 
z_1^8z_2^2\,, 
\\
I^{[2]}_{{s_{3,4}}} (z) 
&=& 
\binom{8}{2}
\Big(\big(1 - \frac{2}{8}\big) f^{(2)}v +f^{(3)}v +f^{(4)}v+f^{(5)}v+f^{(6)}v\Big) 
z_1^8z_2^2\,, 
\\
I^{[2]}_{\si} (z) 
&=& 
\binom{4}{2}
\Big(\big(1 - \frac{2}{4}\big) f^{(4)}v + f^{(3)}v+f^{(2)}v+f^{(1)}v \Big) 
z_6^4z_5^4z_4^2\,. 
\eea

\section{Determinant of $\F_p$-hypergeometric solutions}
\label{sec det}

\subsection{Determinant over $\C$}

Consider  the system of KZ equations \eqref{KZ} over  $\C$. Then the space of solutions is $n-1$-dimensional.

\vsk.2>
Recall the master function $\Phi(t,z)$ introduced in \eqref{Iga}.
Consider the hypergeometric integrals
\bea
I^{(j)}(z) = \int_{z_{j}}^{z_{j+1}} \!\Phi(t,z) \,\sum_{j=1}^n \frac{f^{(j)}v}{t-z_j}\, dt,  \qquad j=1,\dots, n-1.
\eea
To determine the integrals we assume that $z_1,\dots, z_n$ are real, $z_1<\dots <z_n$, and 
for every $l=1,\dots,n$, we fix  a univalued branch  of the function $(t-z_l)^{-m_l/q}$ on the rays
$\{t\in\R\ |\ t<z_l\}$ and  $\{t\in\R\ |\ t>z_l\}$.

The integrals $I^{(j)}$, $j=1,\dots,n-1$, form a basis of solutions of system \eqref{KZ} due to the following theorem.

\begin{thm} [\cite{V1, V2, V3}]
\label{thm det C}

We have
\bean
\label{det C}
&&
{\det}_{j,l=1}^{n-1} \Big( \frac{-m_{l+1}}q\int_{z_{j}}^{z_{j+1}} \Phi(t,z) \frac{dt}{t-z_{l+1}}\Big)
\\
\notag
&&
\phantom{aaaaaaaaa}
=\ \frac{\Ga(-m_1/q+1) \cdots \Ga(-m_n/q+1)}
{\Ga(-m_1/q-\dots - m_n/q+1)}\prod_{1\leq j,\,l\leq n, \ j\ne l} (z_j-z_l)^{-m_l/q}\,.
\eean

\end{thm}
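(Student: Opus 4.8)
The plan is to compute the determinant by first reducing to a product of one-dimensional beta-type integrals and then identifying the resulting constant. The matrix entries
$$
A_{j,l}\ =\ \frac{-m_{l+1}}{q}\int_{z_j}^{z_{j+1}}\Phi(t,z)\,\frac{dt}{t-z_{l+1}},\qquad j,l=1,\dots,n-1,
$$
are all built from the single master function $\Phi(t,z)=\prod_{a=1}^n(t-z_a)^{-m_a/q}$, so the left-hand side is a holomorphic (multivalued) function of $z$ on the complement of the diagonals. The first step is to establish its analytic nature: using the explicit integral representation and standard estimates near $z_i=z_j$, show that $\det A$ is a product of a power of each $(z_i-z_j)$ times a holomorphic single-valued function, i.e.\ the ratio of $\det A$ to the conjectured right-hand side extends to an entire function of $z$ on all of $\C^n$ (the diagonal singularities being matched by construction).

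Next I would pin down that entire function by a degree/homogeneity argument. Scaling $z\mapsto \lambda z$ and translating $z\mapsto z+c\,(1,\dots,1)$, one checks that both sides of \eqref{det C} transform the same way (each $I^{(j)}$ is homogeneous of a fixed weight determined by the exponents $-m_a/q$, as already recorded implicitly in Section \ref{sec bC} for $n=2$); hence the ratio is a bounded entire function, therefore constant. The constant is then evaluated by taking a degeneration in which the $z$'s collide in a nested fashion, $z_2\to z_1$, then $z_3\to z_2$, etc., so that each integral $\int_{z_j}^{z_{j+1}}$ collapses to a beta integral in a single local variable; the off-diagonal entries become negligible in the appropriate normalization, the matrix becomes triangular, and the diagonal entries are evaluated by \eqref{be int}. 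Telescoping the product of gamma-factors
$$
\prod_{j}\frac{\Ga(-m_j/q+1)\,\Ga(\text{partial sum}+1)}{\Ga(\text{next partial sum}+1)}
$$
collapses to $\dfrac{\Ga(-m_1/q+1)\cdots\Ga(-m_n/q+1)}{\Ga(-m_1/q-\dots-m_n/q+1)}$, which is exactly the claimed constant. (Alternatively, one may simply cite \cite{V1,V2,V3}, where this is proved; the statement is attributed to those references, so a short recollection of the argument suffices.)

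The main obstacle is the bookkeeping of branches and the justification that the degeneration picks out precisely the stated ordering of partial sums $\sum_{j=i}^n(-m_j/q)$ with the correct signs: one must fix univalued branches of $(t-z_l)^{-m_l/q}$ on the two rays $t<z_l$ and $t>z_l$ consistently as the $z$'s collide, and track the phase factors $(z_k-z_l)^{-m_l/q}$ that appear, exactly as in the $n=2$ computation \eqref{bC}. Once the $n=2$ case is in hand — where \eqref{bC} already exhibits the constant $\Ga(-m_1/q+1)\Ga(-m_2/q+1)/\Ga(-m_1/q-m_2/q+1)$ and the product of powers $(z_1-z_2)^{-m_2/q}(z_2-z_1)^{-m_1/q}$ — the general case follows by induction on $n$, peeling off one integration variable at a time and using the single-variable result as the inductive input. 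I expect the inductive step to be routine modulo the branch-tracking; no new ideas beyond the beta integral and Liouville's theorem are needed.
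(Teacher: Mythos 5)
The paper itself offers no proof of Theorem \ref{thm det C}: the formula is quoted from \cite{V1,V2,V3}, so the only in-paper model is the proof of its $\F_p$-analog, Theorem \ref{thm det}. The second half of your sketch is fine and is the standard way the constant is fixed: nested collisions $z_{j+1}\to z_j$ collapse each entry to the beta integral \eqref{be int}, the matrix becomes triangular in the appropriate normalization, the gamma factors telescope, and induction on $n$ with \eqref{bC} as base case gives exactly the constant in \eqref{det C} (branch bookkeeping included).

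The gap is in the first half. You assert that ``standard estimates near $z_i=z_j$'' show $\det A$ equals $\prod_{j\ne l}(z_j-z_l)^{-m_l/q}$ times a single-valued entire function, and then invoke homogeneity plus Liouville. But the exact branching/vanishing order of $\det A$ along each diagonal, its nonvanishing (it could a priori vanish identically or to higher order without contradicting entrywise asymptotics), the single-valuedness of the ratio, and its boundedness at infinity are precisely the substantive content of the theorem; entrywise estimates only bound the order from one side and do not deliver any of this. The way this step is actually carried out --- in \cite{V1,V2} and, in parallel, in the paper's own proof of Theorem \ref{thm det} (Lemma \ref{lem deq} and the divisibility and leading-term lemmas around it) --- is a Wronskian-type argument: since the columns are solutions of \eqref{KZ}, the determinant satisfies the scalar equation $\frac{\der}{\der z_i}\det \,=\, \frac1q\sum_{j\ne i}\frac{\on{tr}\,\Om_{ij}}{z_i-z_j}\,\det$, where the trace of $\Om_{ij}$ restricted to the subspace $\{\sum_k m_k I_k=0\}$ equals $-(m_i+m_j)$; this forces $\det A=\on{const}\cdot\prod_{i<j}(z_i-z_j)^{-(m_i+m_j)/q}$ (phases fixed by the chosen branches), after which your degeneration argument finishes the job. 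As written, your Liouville route leaves the exponent, the single-valuedness, and the nonvanishing of $\det A$ unestablished; replace it by the determinant ODE (or cite \cite{V1,V2,V3}, as the paper does) and the proof is complete.
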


Below we  prove an analog of this theorem for $\F_p$-hypergeometric solutions.

\subsection{Determinant over $\F_p$}

Introduce a basis of  $\Sing V[-2]$,
\bean
\label{basis}
w_j = \frac{f^{(j)}v}{M_j}- \frac {f^{(j+1)}v}{M_{j+1}} , \qquad j=  1,\dots,n-1\,.
\eean
Expand each of the $\F_p$-hypergeometric solutions $I^{[l]}(z)$
with respect to this basis,
\bean
\label{exp ell}
I^{[l]}(z) = \sum_{j=1}^{n-1} c^l_j(z) w_j,
\eean
where $c^l_j(z)$ are scalar homogeneous polynomials of degree $\sum_{j=1}^nM_j-lp$.

\begin{thm}
\label{thm det}

Assume that system \eqref{KZ} has ample reduction for a prime $p$. Then we have the square matrix
$c(z) = (c^l_j(z))_{l,j=1,\dots,n-1}$ with nonzero determinant,
 given by the following formula
\bean
\label{det}
\phantom{aaa}
\det c(z) 
\,=
\,
\frac{\Ga_{\F_p}(M_1+1)\cdots \Ga_{\F_p}(M_n+1)}
{\Ga_{\F_p}(M_1+\dots +M_n-(n-1)p+1)}
\prod_{1\leq i<j \leq n}(-1)^{M_j}(z_j-z_i)^{M_i+M_j-p}.
 \eean
\end{thm}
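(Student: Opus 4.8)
The plan is to compute $\det c(z)$ by exploiting the triangular structure of the leading terms of the $\F_p$-hypergeometric solutions, which were determined in Theorem~\ref{thm tle} and organized by Corollary~\ref{cor am i}. Under ample reduction, $r=n-1$ and $i(l)=n-l$, so the solution $I^{[l]}(z)$ has $\id$-leading monomial $z_1^{M_1}\cdots z_{n-l-1}^{M_{n-l-1}} z_{n-l}^{\sum_{j\geq n-l} M_j - lp}$, a monomial in $z_1,\dots,z_{n-l}$ only. First I would show that the matrix $c(z)=(c^l_j(z))$ is, after a suitable ordering of rows and columns, triangular with respect to a partial ordering on monomials; more precisely, I would track which basis vectors $w_j$ appear in the leading term of $I^{[l]}(z)$. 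From \eqref{le term}, the leading coefficient of $I^{[l]}(z)$ is proportional to $\frac{\sum_{j>n-l} f^{(j)}v}{\sum_{j>n-l}M_j} - \frac{f^{(n-l)}v}{M_{n-l}}$, which in the basis $\{w_j\}$ of \eqref{basis} is supported on $w_{n-l}, w_{n-l+1},\dots,w_{n-1}$ (since $w_{n-l}+w_{n-l+1}+\cdots+w_{n-1} = \frac{f^{(n-l)}v}{M_{n-l}} - \frac{f^{(n)}v}{M_n}$ and similar telescoping identities express that combination). Thus in the leading term, $c^l_j$ is ``triangular'': the top-degree part of $c^l_j(z)$ vanishes for $j<n-l$.

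The key step is then to argue that $\det c(z)$ is itself forced, up to a scalar, by these leading terms together with the fact that each $c^l_j(z)$ is a polynomial solution-component and hence divisible by appropriate powers of $(z_i-z_j)$. Concretely, I would induct on $n$: restricting or specializing variables (e.g.\ sending $z_n\to z_{n-1}$ or examining the behavior as two variables collide) relates the $n$-variable determinant to the $(n-1)$-variable case, and the $n=2$ base case is exactly Theorem~\ref{thm beta}, which gives $c^1_1(z) = I^{[1]}(z)$ expressed via $w_1$ and already contains the gamma-function ratio and the factor $(z_2-z_1)^{M_1+M_2-p}$. Alternatively—and this is probably cleaner—I would determine $\det c(z)$ directly: it is a homogeneous polynomial (a priori a polynomial since each $c^l_j$ is) of degree $\sum_{l=1}^{n-1}\delta_l = \sum_{l=1}^{n-1}(\sum_j M_j - lp) = (n-1)\sum_j M_j - \binom{n}{2}p = \sum_{i<j}(M_i+M_j-p)$, which matches the degree of the proposed product $\prod_{i<j}(z_j-z_i)^{M_i+M_j-p}$ (each exponent $M_i+M_j-p$ is positive by Lemma~\ref{lem am ine} with $l=2$). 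Then I would show $\det c(z)$ is divisible by each $(z_j-z_i)^{M_i+M_j-p}$: this comes from analyzing the order of vanishing of the whole matrix $c(z)$ along the diagonal $z_i=z_j$, using that the $\F_p$-hypergeometric solutions, being built from $\Phi_p(x,z)=\prod(x-z_k)^{M_k}$, acquire predictable factors when $z_i=z_j$ (the integrand degenerates to $(x-z_i)^{M_i+M_j}\prod_{k\neq i,j}(x-z_k)^{M_k}$ times the corresponding vector, and extracting the $x^{p-1}$-coefficient produces a factor governed by the Lucas/Lucas-type reduction already used in the proof of Theorem~\ref{thm beta}). Divisibility by the full product, combined with the degree count, forces $\det c(z) = \kappa \prod_{i<j}(z_j-z_i)^{M_i+M_j-p}$ for a constant $\kappa\in\F_p$.

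It then remains to identify the constant $\kappa$, and I expect this to be the main obstacle—not conceptually but bookkeeping-wise: one must pin down the exact sign $\prod_{i<j}(-1)^{M_j}$ and the gamma-function ratio $\Ga_{\F_p}(M_1+1)\cdots\Ga_{\F_p}(M_n+1)/\Ga_{\F_p}(\sum M_j-(n-1)p+1)$. I would do this by evaluating the leading ($\id$-) term of $\det c(z)$: the leading monomial of the determinant is the product of the leading monomials along the triangle, namely $\prod_{l=1}^{n-1} z_1^{M_1}\cdots z_{n-l-1}^{M_{n-l-1}} z_{n-l}^{\sum_{j\geq n-l}M_j-lp}$, whose coefficient is the product of the diagonal leading coefficients computed in \eqref{le term} (after expressing them in the $w_j$-basis and extracting the diagonal entries $c^l_{n-l}$). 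Each diagonal entry contributes a factor $(-1)^{\sum_{j\leq n-l}M_j}\,\Ga_{\F_p}(M_{n-l}+1)\Ga_{\F_p}(\sum_{j>n-l}M_j-(l-1)p+1)/\Ga_{\F_p}(\sum_{j\geq n-l}M_j-lp+1)$ divided by $M_{n-l}$ and telescoping over $l$ collapses the $\Ga_{\F_p}$ factors to the claimed ratio, while matching this against the leading monomial of $\prod_{i<j}(z_j-z_i)^{M_i+M_j-p}$ (which is $\prod_{i<j}(-1)^{M_i+M_j-p} z_i^{M_i+M_j-p}$-type, reorganized by collecting powers of each $z_i$) fixes $\kappa$ including the sign. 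The delicate points are: (i) confirming the exponent of each $z_i$ in the leading monomial of the product agrees with $\sum_l$ of the triangle exponents (a short computation: $z_i$ gets exponent $\sum_{j>i}(M_i+M_j-p)$ from the product's leading term, and one checks this equals the total power of $z_i$ across the diagonal), and (ii) handling the $\Ga_{\F_p}$-periodicity and the identity $\Ga_{\F_p}(x)\Ga_{\F_p}(1-x)=(-1)^x$ carefully so that all intermediate ``$+1$, $-(l-1)p$'' shifts cancel correctly—this is exactly the multi-variable generalization of the single application of Lemma~\ref{lem ga} used in Theorem~\ref{thm beta}.
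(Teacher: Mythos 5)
Your overall skeleton — degree count $\sum_{l=1}^{n-1}(\sum_j M_j-lp)=\sum_{i<j}(M_i+M_j-p)$, divisibility of $\det c(z)$ by each $(z_j-z_i)^{M_i+M_j-p}$, and identification of the constant from the anti-triangular matrix of $\id$-leading coefficients in the basis $w_j$ with telescoping $\Ga_{\F_p}$-factors — is exactly the paper's, and your leading-term computation reproduces the paper's Lemma \ref{lem nonz} correctly (the leading coefficient of $I^{[l]}$ is indeed supported on $w_{n-l},\dots,w_{n-1}$, so the anti-diagonal entries govern the leading term of the determinant). The genuine gap is in the divisibility step, which in characteristic $p$ is the heart of the matter. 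Your proposed mechanism — that the solutions ``acquire predictable factors when $z_i=z_j$'' because the integrand degenerates to $(x-z_i)^{M_i+M_j}\prod_{k\neq i,j}(x-z_k)^{M_k}$, handled by a Lucas-type reduction as in Theorem \ref{thm beta} — does not work as stated: specializing $z_i=z_j$ in $I^{[l]}(z)$ gives the $x^{lp-1}$-coefficient of a perfectly generic polynomial, which is generically nonzero, so no entry of $c(z)$ vanishes along $z_i=z_j$ and no power of $(z_i-z_j)$ appears entrywise (the case $n=2$ is misleading here, since there the single entry is the whole solution). The vanishing of $\det c$ to order $M_i+M_j-p$ is a collective statement — the restricted solutions become linearly dependent — and neither the degeneration sketch nor your alternative ``induct on $n$ by letting $z_n\to z_{n-1}$'' supplies an argument for it; note also that over $\F_p$ one cannot argue by local series expansions and monodromy/asymptotic-zone reasoning as one would over $\C$, and that a first-order equation alone does not pin down the order of vanishing (any extra factor in $\F_p[z^p]$ is invisible to it).

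The paper closes exactly this gap by a different device: $\det c(z)$, being a Wronskian-type determinant, satisfies the scalar system $\der y/\der z_i=\sum_{j\ne i}\frac{M_i+M_j}{z_i-z_j}\,y$ because $\on{Tr}\big\vert_{\Sing V[-2]}\Om^M_{ij}=M_i+M_j$ (Lemma \ref{lem deq}); then, passing to the coordinates $u$ with $z_{j+1}-z_j=u_1\cdots u_j$ and invoking the form of the KZ equations in these coordinates (\cite[Proposition 2.2.3]{V7}), the equation in $u_{n-1}$ has a simple pole with residue $M_{n-1}+M_n$ plus a regular part, which forces the order of divisibility of the polynomial $\det c(z(u))$ by $u_{n-1}$ to be congruent to $M_{n-1}+M_n$ modulo $p$, hence at least $M_{n-1}+M_n-p$ (here ample reduction, via Lemma \ref{lem am ine}, guarantees $p<M_{n-1}+M_n<2p$); reordering the variables gives every pair $(i,j)$, and then your concluding degree-plus-leading-term argument finishes the proof as in the paper. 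So your proposal is structurally right but incomplete at its central step: you need either this congruence-on-the-vanishing-order argument or an equally precise substitute proving that $\det c(z)$ is divisible by $(z_i-z_j)^{M_i+M_j-p}$.
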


Notice that 

\bean
\label{Ga!}
\phantom{aaa}
\frac{\Ga_{\F_p}(M_1+1)\cdots \Ga_{\F_p}(M_n+1)}{\Ga_{\F_p}(M_1+\dots +M_n-(n-1)p+1)}
\,=\, (-1)^{n-1}
\frac{M_1!\cdots M_n!}{(M_1+\dots+M_n-(n-1)p)!}\,.
\eean

\vsk.4>

The theorem is proved in   Sections \ref{5.2}, \ref{5.3}.

\subsection{Preliminary remarks}
\label{5.2}

\begin{lem}
\label{lem nonz}
The function $\det c(z)$ is a nonzero homogeneous polynomial of degree
\bean
\label{deg c}
 (n-1)\sum_{j=1}^n M_j - \frac{n(n-1)}2\,p
\eean
with  $\id$-leading term 
\bean
\label{ld c}
{\det}_\id c(z) = \on{const} z_1^{(n-1)(M_1-p) + \sum_{j=2}^nM_j}  
z_2^{(n-2)(M_2-p) + \sum_{j=3}^nM_j}\dots z_{n-1}^{M_{n-1}-p + M_n}\,,
\eean
where 
\bean
\label{const}
\on{const} =  (-1)^{n(n-1)/2+\sum_{j=1}^{n-1}(n-j)M_j}
\,
\frac{\Ga_{\F_p}(M_1+1)\cdots \Ga_{\F_p}(M_n+1)}{\Ga_{\F_p}(M_1+\dots +M_n-(n-1)p+1)}\,,
\eean

\end{lem}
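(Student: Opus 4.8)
The plan is to compute the $\id$-leading term of each row of the matrix $c(z)$ using the results of Section \ref{sec le co}, and then extract the leading term of the determinant from the product of the diagonal entries, checking that the off-diagonal contributions cannot produce a larger $\id$-leading monomial. First I would invoke Corollary \ref{cor am i}: under ample reduction, $i(l) = n-l$ for $l = 1, \dots, n-1$, so Theorem \ref{thm tle} gives the $\id$-leading term $I^{[l]}_\id(z)$ explicitly. Rewriting $\big(\sum_{j=n-l+1}^n f^{(j)}v\big)/\big(\sum_{j=n-l+1}^n M_j\big) - f^{(n-l)}v/M_{n-l}$ in terms of the basis $w_1, \dots, w_{n-1}$ of \eqref{basis}, one sees (using $\sum_i M_i c_i = 0$ for the vanishing of the tail and telescoping the $w_j$) that the coefficient of $w_{n-l}$ in this vector is, up to the scalar prefactor in \eqref{le term}, a nonzero constant, while the coefficients of $w_j$ for $j < n-l$ vanish. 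Hence the $\id$-leading term of the polynomial $c^l_{n-l}(z)$ is a nonzero multiple of
\[
z_1^{M_1}\dots z_{n-l-1}^{M_{n-l-1}} z_{n-l}^{\,\sum_{j=n-l}^n M_j - lp},
\]
and for $j < n-l$ the polynomial $c^l_j(z)$ has $\id$-leading monomial no larger than this.

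Second, I would argue that the product of the diagonal entries $\prod_{l=1}^{n-1} c^l_{n-l}(z)$ has $\id$-leading monomial equal to \eqref{ld c}: the exponent of $z_k$ is obtained by summing $M_k$ over those $l$ with $n - l - 1 \ge k$, i.e. $l \le n-1-k$, contributing $(n-1-k)M_k$, plus the single exceptional contribution $\sum_{j=k}^n M_j - (n-k)p$ from the term with $n-l = k$. Adding these gives $(n-k)M_k - (n-k)p + \sum_{j=k+1}^n M_j = (n-k)(M_k-p) + \sum_{j=k+1}^n M_j$, which matches \eqref{ld c}. The key combinatorial point making the diagonal dominate is that for a matrix whose $(l,j)$ entry is a polynomial supported (in the $>_\id$ order) at monomials $\le$ some monomial $\mu_{l,j}$, with the supports arranged so that the identity permutation is the unique one maximizing $\prod_l \mu_{l,\sigma(l)}$ in $>_\id$ — here this follows because $c^l_j = 0$ for $j > n-l$ (the leading term of $I^{[l]}$ lives in $z_1, \dots, z_{n-l}$, so only $w_1, \dots, w_{n-l}$ occur), so the matrix $c(z)$ is block lower-anti-triangular and its determinant is literally $\pm \prod_l c^l_{n-l}(z)$ up to sign, with no cancellation possible.

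Third, I would compute the constant. The sign $(-1)^{n(n-1)/2}$ comes from reversing the anti-diagonal permutation $l \mapsto n-l$; the sign $(-1)^{\sum_{j=1}^{n-1}(n-j)M_j}$ and the Gamma-factor $\Ga_{\F_p}(M_1+1)\cdots\Ga_{\F_p}(M_n+1)/\Ga_{\F_p}(\sum M_j - (n-1)p + 1)$ come from multiplying together the prefactors $(-1)^{\sum_{j=1}^{n-l}M_j}\,\Ga_{\F_p}(M_{n-l}+1)\Ga_{\F_p}\!\big(\sum_{j=n-l+1}^n M_j - (l-1)p + 1\big)/\Ga_{\F_p}\!\big(\sum_{j=n-l}^n M_j - lp + 1\big)$ over $l = 1, \dots, n-1$, where the telescoping of the Gamma-quotients (the numerator $\Ga_{\F_p}(\sum_{j=n-l+1}^n M_j - (l-1)p + 1)$ at step $l$ cancels the denominator at step $l-1$, using $i(l-1) = n-l+1$) leaves exactly $\prod_{l} \Ga_{\F_p}(M_{n-l}+1) = \prod_{k=1}^{n-1}\Ga_{\F_p}(M_k+1)$ in the numerator together with the surviving denominator at $l = n-1$, namely $\Ga_{\F_p}(\sum_{j=1}^n M_j - (n-1)p + 1)$, and the still-missing factor $\Ga_{\F_p}(M_n+1)$ at the $l=1$ numerator. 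Collecting the powers of $(-1)$ from $\sum_{l=1}^{n-1}\sum_{j=1}^{n-l}M_j = \sum_{j=1}^{n-1}(n-j)M_j$ and comparing with \eqref{const} finishes the bookkeeping. The degree statement \eqref{deg c} follows either by summing the degrees $\delta_l = \sum_j M_j - lp$ of the rows for $l = 1, \dots, n-1$, which gives $(n-1)\sum_j M_j - p\sum_{l=1}^{n-1} l = (n-1)\sum_j M_j - \tfrac{n(n-1)}{2}p$, or equivalently by summing the exponents in \eqref{ld c}.

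The main obstacle I anticipate is the careful verification that $c^l_j(z) = 0$ for $j > n-l$, i.e. that the $\F_p$-hypergeometric solution $I^{[l]}(z)$, when expanded in the basis $w_1, \dots, w_{n-1}$, has no component along $w_j$ for $j > n-l$ — not merely that its \emph{leading} term does not. This is where one needs more than Theorem \ref{thm tle}: it requires knowing that every monomial appearing in $I^{[l]}(z)$ has its support of variables contained in $\{z_1, \dots, z_{n-l}\}$, or at least that the coordinate combination is always a combination of $w_1, \dots, w_{n-l}$. I would deduce this from Lemma \ref{lem coef}: the condition $d_i \le M_i$ together with $\sum d_i = \delta_l = \sum_j M_j - lp$ and Lemma \ref{lem am ine} forces $d_{n-l+1} = \dots = d_n$ to be constrained, and more precisely one shows the coefficient vector $(1 - d_1/M_1, \dots, 1 - d_n/M_n)$ lies in the span of $w_1, \dots, w_{n-l}$ exactly when the tail coordinates agree appropriately. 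If this turns out to be delicate, the fallback is to argue directly with the $>_\id$ order on the whole matrix: since the matrix is at least \emph{leading-term} block-triangular, $\det_\id c(z)$ equals the product of the leading terms of the anti-diagonal entries provided we rule out a larger contribution, and that ruling-out uses only the monomial inequalities $\mu_{l,j} \le_\id \mu_{l,n-l}$ for $j \le n-l$ plus a strict-inequality check for $j < n-l$, which follows from \eqref{le term} since the leading monomial of $I^{[l]}$ is a genuine monomial in $z_1, \dots, z_{n-l}$ with the $z_{n-l}$-exponent strictly below $M_{n-l}$.
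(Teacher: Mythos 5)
Your skeleton matches the paper's proof (leading terms of $I^{[l]}$ from Theorem \ref{thm tle} with $i(l)=n-l$ under ample reduction, an anti-triangularity argument, telescoping of the $\Ga_{\F_p}$-ratios, degree by summing $\delta_l$), and your exponent and constant bookkeeping are essentially correct. But the step on which the whole determinant computation hinges is false as you state it: it is not true that $c^l_j(z)=0$ for $j>n-l$, so $c(z)$ is not literally anti-triangular and $\det c(z)$ is not $\pm\prod_l c^l_{n-l}(z)$ "with no cancellation possible". Your parenthetical reason conflates the variables with the basis: the fact that the $\id$-leading \emph{monomial} of $I^{[l]}$ involves only $z_1,\dots,z_{n-l}$ says nothing about which $w_j$ occur in its \emph{coefficient}. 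Expanding the vector in \eqref{le term} in the basis \eqref{basis} gives
\[
\frac{\sum_{j=n-l+1}^n f^{(j)}v}{\sum_{j=n-l+1}^n M_j}-\frac{f^{(n-l)}v}{M_{n-l}}
=\sum_{j=n-l}^{n-1}a_j\,w_j,\qquad
a_{n-l}=-1,\quad a_j=-1+\frac{M_{n-l+1}+\dots+M_j}{\sum_{k=n-l+1}^nM_k}\ \ (j>n-l),
\]
so the $w_j$-components of the leading coefficient vanish exactly for $j<n-l$ (as your first paragraph correctly says, inconsistently with your second) and are generically nonzero for $j\ge n-l$; moreover for $j<n-l$ the full polynomial $c^l_j(z)$ is also generally nonzero, since by Lemma \ref{lem coef} its coefficient at $z_1^{d_1}\cdots z_n^{d_n}$ is proportional to $\sum_{k\le j}(M_k-d_k)$. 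Hence your proposed deduction of the vanishing from Lemma \ref{lem coef} cannot succeed, and your fallback does not repair the gap either: it only compares monomial bounds for entries with $j\le n-l$, whereas the permutations you must rule out are precisely those using an entry with $j>n-l$, whose $\id$-leading monomial is \emph{equal} to that of the anti-diagonal entry in the same row.

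The correct closing of the gap (and, implicitly, the paper's argument) is row-wise: every monomial of every entry of row $l$ is $\le_\id\mu_l:=z_1^{M_1}\cdots z_{n-l-1}^{M_{n-l-1}}z_{n-l}^{\sum_{j\ge n-l}M_j-lp}$, so by multiplicativity of the lexicographic order the coefficient of $\mu:=\prod_l\mu_l$ in $\det c(z)$ equals $\det C(\id)$, where $C(\id)^l_j$ is the $w_j$-component of the $\id$-leading coefficient of $I^{[l]}$. By the display above, $C(\id)$ is anti-triangular with $C(\id)^l_j=0$ for $j<n-l$ and nonzero anti-diagonal entries, so $\det C(\id)=\pm\prod_lC(\id)^l_{n-l}\ne0$; equivalently, any permutation $\sigma$ other than $l\mapsto n-l$ satisfies $\sigma(l)<n-l$ for some $l$ (since $\sigma(l)\ge n-l$ for all $l$ forces equality by comparing sums), and that factor destroys the top monomial $\mu$. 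With this replacement your computation of the monomial \eqref{ld c} and the telescoping of the Gamma factors go through; note only that the sign $(-1)^{n(n-1)/2}$ is not just the reversal sign $(-1)^{(n-1)(n-2)/2}$ but includes the extra $(-1)^{n-1}$ coming from the factors $a_{n-l}=-1$, after which one recovers \eqref{const}.
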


\begin{proof}
If $\det c(z)$ is nonzero, then it is of degree
\bea
\deg (\det c(z)) = \sum_{l=1}^{n-1}\Big(\sum_{j=1}^nM_j-lp\Big),
\eea
which gives \eqref{deg c}.  By Theorem \ref{thm tle} we have
\bean
\label{lead}
I^{[l]}_\id(z)
&=&
\,(-1)^{\sum_{j=1}^{n-l}M_j}\,\frac{\Ga_{\F_p}(M_{n-l}+1)\,\Ga_{\F_p}\!\left(\sum_{j={n-l}+1}^nM_j-(l-1)p+1\right)}
{\Ga_{\F_p}\!\left(\sum_{j={n-l}}^nM_j-lp+1\right)}
\\
\notag
&\times&
\left( \frac {\sum_{j= {n-l}+1}^n f^{(j)}v}{\sum_{j= {n-l}+1}^n M_j}-\frac{f^{({n-l})}v}{M_{{n-l}}}\right) 
z_{1}^{M_{1}}z_{2}^{M_{2}}\dots z_{{{n-l}-1}}^{M_{{{n-l}-1}}}
z_{{n-l}}^{\sum_{j={n-l}}^n M_{j}\,-\,lp}\,,
\eean
 Expanding vectors $ \Big(\frac {\sum_{j= {n-l}+1}^n f^{(j)}v}
 {\sum_{j= {n-l}+1}^n M_j}-\frac{f^{({n-l})}v}{M_{{n-l}}}\Big)$,
$l=1,\dots,n-1$, with respect to the basis 
$w_1,\dots,w_{n-1}$, we obtain a square $(n-1)\times(n-1)$-matrix $C(\id)=(C(\id)^l_j)$ of coefficients
of the expansion.
The matrix is triangular with respect to the main anti-diagonal. The anti-diagonal entries are
\bea
C(\id)^l_{n-l}  = (-1)^{\sum_{j=1}^{n-l}M_j +1}\,
\frac{\Ga_{\F_p}(M_{n-l}+1)\,\Ga_{\F_p}\!\left(\sum_{j={n-l}+1}^nM_j-(l-1)p+1\right)}
{\Ga_{\F_p}\!\left(\sum_{j={n-l}}^nM_j-lp+1\right)}\,.
\eea
Hence  
\bea
\det C(\id) 
=
 (-1)^{n(n-1)/2 + \sum_{j=1}^{n-1}(n-j)M_j   }
\,
\frac{\Ga_{\F_p}(M_1+1)\cdots \Ga_{\F_p}(M_n+1)}{\Ga_{\F_p}(M_1+\dots +M_n-(n-1)p+1)}\,.
\eea
These formulas imply the lemma.
\end{proof}

\subsection{Differential equations for $\det c(z)$}
\label{5.3}

\begin{lem}
\label{lem deq}

The polynomial $\det c(z)$ satisfies the system of scalar differential equations
\bean
\label{deq det}
\frac{\der y}{\der z_i} = \sum_{j\ne i} \frac{M_i+M_j}{z_i-z_j}\,y, \qquad i=1,\dots, n\,.
\eean

\end{lem}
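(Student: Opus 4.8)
The plan is to show that each $\F_p$-hypergeometric solution $I^{[l]}(z)$, and hence each coordinate polynomial $c^l_j(z)$, satisfies a suitable first-order scalar equation, and then to combine these equations for the full matrix $c(z)$ via the standard logarithmic-derivative-of-determinant identity. Concretely, fix $i\in\{1,\dots,n\}$ and differentiate the relation $\partial I/\partial z_i = \tfrac1q\sum_{j\ne i}\Omega_{ij}(z_i-z_j)^{-1}I$ componentwise in the basis $w_1,\dots,w_{n-1}$ of $\Sing V[-2]$. Since $\Omega_{ij}$ (equivalently $\Omega^{\slt}_{ij}$) preserves $\Sing V[-2]$, this gives $\partial_{z_i}(c^l_\bullet) = \tfrac1q\sum_{j\ne i}(z_i-z_j)^{-1}\,\omega_{ij}\,(c^l_\bullet)$, where $\omega_{ij}$ is the matrix of $\Omega^{\slt}_{ij}$ in the $w$-basis, acting on the column $(c^l_1,\dots,c^l_{n-1})^{\mathrm t}$. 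Stacking over $l=1,\dots,n-1$ we get a matrix ODE $\partial_{z_i} c(z) = \tfrac1q\sum_{j\ne i}(z_i-z_j)^{-1}\,\omega_{ij}\,c(z)$ (with $\omega_{ij}$ acting on the $j$-index and the rows indexed by $l$ merely along for the ride as a second index).

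The key computation is then: $\partial_{z_i}\log\det c(z) = \tr\bigl(c(z)^{-1}\partial_{z_i}c(z)\bigr) = \tfrac1q\sum_{j\ne i}(z_i-z_j)^{-1}\,\tr(\omega_{ij})$. So everything reduces to computing the trace of $\Omega^{\slt}_{ij}$ acting on the $(n-1)$-dimensional space $\Sing V[-2]$. From the explicit form of $\Omega_{ij}$ in \eqref{Om_ij_reduced} one reads off that, on the full $n$-dimensional $V[-2]$ with basis $f^{(1)}v,\dots,f^{(n)}v$, the operator $\Omega_{ij}$ has trace $-m_i-m_j$; restricting to the codimension-one subspace $\Sing V[-2]$ removes the eigenvalue on the quotient $V[-2]/\Sing V[-2]$, which (since $ef^{(i)}v=m_iv$ and the Casimir acts on the weight-$0$ vector $v$ by $0$ in this normalization) is $0$. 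Hence $\tr(\Omega^{\slt}_{ij}|_{\Sing V[-2]}) = -m_i-m_j$, and therefore
\[
\frac{\partial}{\partial z_i}\log\det c(z) \;=\; \frac1q\sum_{j\ne i}\frac{-m_i-m_j}{z_i-z_j}
\;=\;\sum_{j\ne i}\frac{M_i+M_j}{z_i-z_j},
\]
using $M_i\equiv -m_i/q\pmod p$ from \eqref{M_i}. This is exactly \eqref{deq det}, noting that the left side makes sense because $\det c(z)$ is nonzero as a polynomial by Lemma \ref{lem nonz} (so we may invert it in the field $\F_p(z)$ of rational functions).

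The main obstacle I anticipate is the trace computation on $\Sing V[-2]$, i.e.\ correctly accounting for the normalization of the Casimir $\Omega^{\slt}_{ij}$ (the $-\tfrac{m_im_j}2\mathrm{Id}$ shift) and verifying that the induced action on the line $V[-2]/\Sing V[-2]$ is trivial, so that the restricted trace is genuinely $-m_i-m_j$ rather than $-m_i-m_j$ minus some correction. An alternative, purely combinatorial route that sidesteps the Lie-algebra bookkeeping is to differentiate directly under the $\F_p$-integral sign: from $I^{[l]}(z)=\int_{[l]_p}\Phi_p(x,z)(\dots)\,dx$ and $\partial_{z_i}\Phi_p = -M_i(x-z_i)^{-1}\Phi_p$, combined with the Stokes relation $\int_{[l]_p}\partial_x(\cdots)=0$, one derives the same first-order system for $c^l_\bullet$ with the matrix entries computed explicitly; this would reprove Lemma \ref{lem deq} without invoking $\Omega^{\slt}_{ij}$ at all. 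Either way, once \eqref{deq det} is in hand, Lemma \ref{lem nonz} pins down the constant and the precise monomial, and Theorem \ref{thm det} follows by matching the unique (up to scalar in $\F_p[z^p]$) solution of \eqref{deq det} — but that comparison is carried out in the later parts of Section \ref{sec det}, so for Lemma \ref{lem deq} itself the computation above suffices.
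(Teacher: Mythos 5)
Your proposal is correct and follows essentially the same route as the paper: apply the Liouville/Jacobi identity to the matrix of coordinates of the solutions $I^{[l]}(z)$ in the basis $w_1,\dots,w_{n-1}$ of $\Sing V[-2]$, reducing everything to the trace of the Casimir-type operator restricted to $\Sing V[-2]$, which the paper records directly as $\on{Tr}\vert_{\Sing V[-2]}\Om_{ij}^M=M_i+M_j$ while you compute $-m_i-m_j$ for $\Om_{ij}$ and convert via $\tfrac1q(-m_i-m_j)\equiv M_i+M_j$ (the quotient $V[-2]/\Sing V[-2]$ indeed carries the zero action, since the image of $\Om_{ij}$ lies in $\Sing V[-2]$). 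The only cosmetic difference is your use of $c(z)^{-1}$ (justified by Lemma \ref{lem nonz}), which can be avoided by differentiating the determinant multilinearly as the paper implicitly does.
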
 

\begin{proof}  The operators $\Om_{ij}^M$ defined in \eqref{Om_ij_M} preserve  $\Sing V[-2]$ and 
\bean
\label{Tr ij}
\on{Tr}\big\vert_{\Sing[-2]}\Om_{ij}^M = M_i+M_j .
\eean
The polynomial $\det c(z)$, as the determinant of solutions, satisfies the system of equations
\bean
\label{deq det}
\frac{\der \det c(z)}{\der z_i} = \sum_{j\ne i} \frac{\on{Tr}\big\vert_{\Sing[-2]} \Om_{ij}^M}{z_i-z_j}\,\det
c(z), \qquad i=1,\dots, n\,.
\eean
This proves the lemma.
\end{proof}

\begin{lem}
\label{lem yo}
The polynomial 
\bea
y^0(z) = \prod_{1\leq i<j \leq n}(z_i-z_j)^{M_i+M_j-p}
\eea
is a solution of system \eqref{deq det}.
\qed

\end{lem}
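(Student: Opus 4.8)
The plan is to verify directly that $y^0(z) = \prod_{1\leq i<j\leq n}(z_i-z_j)^{M_i+M_j-p}$ satisfies each of the equations $\partial y/\partial z_k = \sum_{j\ne k}\frac{M_k+M_j}{z_k-z_j}\,y$ for $k=1,\dots,n$. Since we are working over $\F_p$ and $y^0$ is a product of powers, the cleanest route is logarithmic differentiation, being careful that the exponents $M_i+M_j-p$ are ordinary integers (the expression is a genuine polynomial in $\F_p[z]$ because ample reduction gives $M_i+M_j-p>0$ for all $i<j$ by Lemma \ref{lem am ine} with $l=2$, or one can note each factor is raised to a nonnegative integer power).

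First I would fix $k$ and compute $\partial y^0/\partial z_k$ by the product rule. A factor $(z_i-z_j)^{M_i+M_j-p}$ with $i<j$ contributes to the $z_k$-derivative only when $k\in\{i,j\}$: if $k=i<j$ it contributes $(M_i+M_j-p)(z_i-z_j)^{-1}$ times $y^0$, and if $k=j>i$ it contributes $(M_i+M_j-p)(z_i-z_j)^{-1}=-(M_i+M_j-p)(z_j-z_i)^{-1}$ times $y^0$. Collecting all terms,
\[
\frac{\partial y^0}{\partial z_k}
= y^0\Bigl(\sum_{j>k}\frac{M_k+M_j-p}{z_k-z_j} + \sum_{i<k}\frac{M_i+M_k-p}{z_k-z_i}\Bigr)
= y^0 \sum_{j\ne k}\frac{M_k+M_j-p}{z_k-z_j}.
\]
Now in the coefficient ring $\F_p$ we have $p=0$, so $M_k+M_j-p\equiv M_k+M_j$, and the right-hand side becomes $y^0\sum_{j\ne k}\frac{M_k+M_j}{z_k-z_j}$, which is exactly equation \eqref{deq det} for index $k$. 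Since $k$ was arbitrary, $y^0$ is a solution, as claimed.

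There is no real obstacle here; the only point requiring a word of care is justifying that the formal manipulation with negative-looking exponents is legitimate over $\F_p$. This is handled by observing that after the product-rule step every exponent appearing is the original nonnegative integer $M_i+M_j-p$ minus one, and the factor $(z_i-z_j)^{-1}$ always cancels against one copy of $(z_i-z_j)$ sitting inside $y^0$, so each summand $y^0(z_k-z_j)^{-1}$ is again a polynomial; thus the identity holds in $\F_p[z]$ and the reduction $p\equiv 0$ is the final step. This makes $y^0$ a convenient ``reference solution'' against which $\det c(z)$ will be compared in the remainder of the proof of Theorem \ref{thm det}.
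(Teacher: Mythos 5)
Your verification is correct and is exactly the computation the paper leaves implicit (the lemma is stated with the proof omitted as immediate): differentiate the product, use that each exponent $M_i+M_j-p$ is a positive integer under ample reduction (Lemma \ref{lem am ine} with $l=2$) so every summand stays polynomial, and reduce $M_i+M_j-p\equiv M_i+M_j$ in $\F_p$. Nothing is missing.
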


\begin{lem}
\label{lem div}
For any $i\ne j$, the polynomial $\det c(z)$ is divisible by $(z_i-z_j)^{M_i+M_j-p}$.
\end{lem}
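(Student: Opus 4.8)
The plan is to show divisibility of $\det c(z)$ by $(z_i-z_j)^{M_i+M_j-p}$ for each fixed pair $i\neq j$ by localizing the differential equation \eqref{deq det} near the diagonal $z_i=z_j$. Fix $i<j$ (the case $i>j$ is symmetric), set $u=z_i-z_j$, and freeze all remaining variables at generic values. Among the $n$ equations in \eqref{deq det}, consider the one for $\der y/\der z_i$: near $u=0$ the only singular term is $(M_i+M_j)/(z_i-z_j)\cdot y$, all other summands $(M_i+M_k)/(z_i-z_k)$ being regular. So along the transverse direction $u$ we have an equation of the form $\der y/\der u = \big((M_i+M_j)/u + g(u)\big)y$ with $g$ holomorphic at $u=0$, which forces any solution to behave like $u^{M_i+M_j}$ times a unit near $u=0$. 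Since $\det c(z)$ is a polynomial, this exponent argument over $\C$ cannot be applied verbatim; instead I would argue directly with the order of vanishing.

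First I would write $\det c(z)$, as a polynomial in $z_i$, in the form $(z_i-z_j)^N\, h(z)$ where $N\geq 0$ is the exact order of vanishing of $\det c(z)$ along $z_i=z_j$ and $h$ is not divisible by $z_i-z_j$. Plugging this into the $z_i$-equation of \eqref{deq det} and clearing the factor $(z_i-z_j)^{N-1}$ (legitimate since we are in a polynomial ring and $z_i-z_j$ is prime), one gets, after restricting to $z_i=z_j$,
\[
N\, h(z)\big|_{z_i=z_j} \,=\, (M_i+M_j)\, h(z)\big|_{z_i=z_j}\,,
\]
because every other term on both sides carries a positive power of $z_i-z_j$ and dies on restriction. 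Here I must know that $h(z)|_{z_i=z_j}\neq 0$, which is exactly the statement that $N$ is the \emph{exact} order of vanishing. Hence $N=M_i+M_j$ in $\F_p$, i.e.\ $N\equiv M_i+M_j\pmod p$. Combined with the a priori degree bound on $\det c(z)$ coming from Lemma \ref{lem nonz} and the ampleness inequalities of Lemma \ref{lem am ine} (which give $0<M_i+M_j-p<p$ so that $M_i+M_j-p$ is a genuine nonnegative integer and cannot be inflated by a further multiple of $p$ without exceeding the total degree), one concludes $N = M_i+M_j - p$, giving the claimed divisibility.

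The main obstacle is the justification that $h(z)|_{z_i=z_j}\neq 0$ and, relatedly, that the congruence $N\equiv M_i+M_j\pmod p$ together with the global degree constraint pins down $N$ to the single value $M_i+M_j-p$ rather than, say, $M_i+M_j$ or $M_i+M_j-2p<0$. For the first point I would factor out the \emph{largest} power of $z_i-z_j$ by definition, so $h$ is nonzero on the diagonal by construction, and the displayed relation then does give $N = M_i+M_j$ in $\F_p$ without circularity; the only subtlety is checking that the coefficient-clearing step is valid, i.e.\ that $\det c(z)$ is not identically zero, which is Lemma \ref{lem nonz}. For the second point, one uses that $\sum_{k\neq i,j}(M_i+M_k)/(z_i-z_k)$ contributes regular terms, so no extra vanishing is forced, while the $\id$-leading-term computation of Lemma \ref{lem nonz} caps $N$ below $M_i+M_j$ (indeed the exponent of $z_i$ in \eqref{ld c} is strictly less than what $M_i+M_j$ would demand), ruling out $N=M_i+M_j$; and $N\geq 0$ rules out $M_i+M_j-2p$. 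The remaining routine check is that the other $n-1$ equations of \eqref{deq det} are automatically consistent with this local behavior, which follows since they are singular only along other diagonals.

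Once divisibility holds for every pair, the product $\prod_{i<j}(z_i-z_j)^{M_i+M_j-p}$ divides $\det c(z)$; comparing degrees via \eqref{deg c} shows the quotient is a constant, and matching the $\id$-leading coefficient against \eqref{const} (together with the sign bookkeeping $\prod_{i<j}(-1)^{M_j}$ coming from writing $(z_j-z_i)^{M_i+M_j-p}=(-1)^{M_i+M_j-p}(z_i-z_j)^{M_i+M_j-p}$) identifies that constant, yielding \eqref{det}. This last part I regard as purely computational given the lemmas already in place.
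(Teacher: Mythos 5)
Your argument is correct and establishes the lemma, but by a different route than the paper. You localize at the prime $(z_i-z_j)$ in $\F_p[z]$: writing $\det c(z)=(z_i-z_j)^N h$ with $(z_i-z_j)\nmid h$, the $z_i$-equation of \eqref{deq det}, multiplied by $\prod_{k\ne i}(z_i-z_k)$ and reduced modulo $(z_i-z_j)$, yields the indicial relation $N\equiv M_i+M_j\ (\mathrm{mod}\ p)$; since $0<M_i+M_j-p<p$ by Lemma \ref{lem am ine} and $N\ge 0$, this already gives $N\ge M_i+M_j-p$, i.e.\ the claimed divisibility, for every pair at once. The paper instead changes variables $z\mapsto u$ via \eqref{zu} (citing \cite[Proposition 2.2.3]{V7} for regularity of the transformed connection), so that the divisor $z_n=z_{n-1}$ becomes the coordinate hyperplane $u_{n-1}=0$, reads off the same congruence for the order of vanishing in $u_{n-1}$, and then treats the remaining pairs by permuting the variables. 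The underlying mechanism (residue of the scalar equation forces the vanishing order to be $\equiv M_i+M_j$ mod $p$, and the least nonnegative solution is $M_i+M_j-p$) is the same; your implementation is more direct and avoids both the coordinate change and the external reference, while the paper's version packages the regular part cleanly via $\on{Reg}_i(u)$. One caveat: your side remark that the exponent of $z_i$ in \eqref{ld c} rules out $N=M_i+M_j$ pair by pair is not justified in general (for a pair $(i,j)$ with several indices beyond $i$, that exponent need not be smaller than $M_i+M_j$), but this exactness claim is not needed for the lemma, and for Theorem \ref{thm det} it follows anyway from your final degree comparison: each $N_{ij}\ge M_i+M_j-p$ and $\sum_{i<j}N_{ij}\le\deg\det c(z)=\sum_{i<j}(M_i+M_j-p)$ force equality, after which matching the $\id$-leading coefficient with \eqref{const} gives \eqref{det}.
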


\begin{proof}
We will prove that $\det c(z)$ is divisible by $(z_{n-1}-z_n)^{M_{n-1}+M_{n}-p}$. The divisibility by
$(z_i-z_j)^{M_i+M_j-p}$ for other $i\ne j$ is proved by  reordering  variables.

Introduce new variables $u_1,\dots,u_n$ by the equaions: $u_n=z_1+\dots+z_n$,
\bean
\label{zu}
z_{j+1}-z_j = u_1u_2\dots u_j,\qquad j=1,\dots, n-1.
\eean
The variables $z_1,\dots,z_n$ are polynomials in $u_1,\dots,u_n$.
Hence if $I(z)$ is a polynomial solution of system \eqref{KZ}, then $I(z(u))$ is a polynomial
solution of the transformed differential KZ equations, which take the form:
\bean
\label{uKZ}
\frac{\der I}{\der u_i} 
=
\Big( \frac{\sum_{j>i} \Om_{ij}^M}{u_i} + \on{Reg}_i(u)\Big) I, \quad i=1,\dots,n-1,
\qquad
\frac{\der I}{\der u_n} = 0,
\eean
where $\on{Reg}_i(u)$ is an operator-valued function of $u$ regular at the point
$u_1=\dots=u_n=0$. See this statement in \cite[Proposition 2.2.3]{V7}. In particular, we have
\bean
\label{uKZ n-1}
\frac{\der I}{\der u_{n-1}} = \Big(\frac{\Om_{n-1,n}^M}{u_{n-1}} + \on{Reg}_{n-1}\Big) I.
\eean
Hence the polynomial $\det c(z(u))$ a solution of the system of differential equations;
\bean
\label{uKZ}
\frac{\der y}{\der u_i} 
=
 \Big(\frac{\sum_{j>i} (M_i+M_j)}{u_i} + \on{Tr}\on{Reg}_i(u)\Big) y, \quad i=1,\dots,n-1,
\qquad
\frac{\der y}{\der u_n} = 0,
\eean
cf. formulas \eqref{Tr ij}, \eqref{deq det}. In particular, we have
\bean
\label{uKZ n-1}
\frac{\der I}{\der u_{n-1}} = \Big(\frac{M_{n-1}+M_n}{u_{n-1}} + \on{Reg}_{n-1}\Big) I.
\eean
Hence the polynomial $\det c(z(u))$ is divisible by a monomial $u_{n-1}^d$, where $d$ is a
nonnegative solution of the congruence
\bea
d\equiv M_{n-1}+M_n \ \ (\on{mod}\ p) .
\eea
The number $M_{n-1}+M_n-p$ is the smallest nonnegative solution. 
Hence $\det c(z(u))$ is divisible by $u_{n-1}^{M_{n-1}+M_n-p}$, that is,
\bea
\det c(z(u)) &=& F(u_1,\dots,u_n) \,u_{n-1}^{M_{n-1}+M_n-p},
\eea
where $F(u_1,\dots,u_n)$ is a polynomial. Thus
\bea
\det c(z)
&=&
F\Big(z_2-z_1,\frac{z_3-z_2}{z_2-z_1}, \dots, \frac{z_n-z_{n-1}}{z_{n-1}-z_{n-2}}, z_1+\dots+z_n\Big)
\Big(\frac{z_n-z_{n-1}}{z_{n-1}-z_{n-2}}\Big)^{M_{n-1}+M_n-p}.
\eea
Hence $\det c(z)$ is divisible $(z_n-z_{n-1})^{M_{n-1}+M_n-p}$.
The lemma is proved.
\end{proof}

\smallskip
\noindent
{\it Proof of Theorem \ref{thm det}.}  The theorem follows from Lemmas 
\ref{lem div}  and  \ref{lem nonz}.
\qed

\subsection{Remark on the initial value problem}
\label{sec in co}

\begin{cor}
\label{cor triv}
Assume that system \eqref{KZ} has ample reduction for a prime $p$.
For  $x=(x_1,\dots,x_n)\in\F_p^n$ with  distinct coordinates and
$w\in \Sing V[-2]$,  there exist a
unique vector
$(c_1,\dots, c_{n-1})\in\F_p^{n-1}$ such that
\bean
w=\sum_{l=1}^{n-1} c_l I^{[l]}(x)\,.
\eean

\end{cor}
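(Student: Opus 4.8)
The plan is to deduce Corollary \ref{cor triv} from Theorem \ref{thm det} essentially by a dimension count plus nondegeneracy of the evaluation matrix at a point with distinct coordinates. First I would observe that, by Theorem \ref{thm det}, the solutions $I^{[1]}(z),\dots,I^{[n-1]}(z)$ have coordinate matrix $c(z)=(c^l_j(z))$ with respect to the basis $w_1,\dots,w_{n-1}$ of $\Sing V[-2]$, and that $\det c(z)$ is given by the explicit product formula \eqref{det}. The key point is that when $x=(x_1,\dots,x_n)\in\F_p^n$ has pairwise distinct coordinates, every factor $(x_j-x_i)^{M_i+M_j-p}$ is a nonzero element of $\F_p$ (here one uses $M_i+M_j-p\ge 0$, which holds under ample reduction by Lemma \ref{lem am ine} applied with $l=2$, so the exponent is a genuine nonnegative integer and the power is a well-defined nonzero scalar), and the gamma-factor $\Ga_{\F_p}(M_1+1)\cdots\Ga_{\F_p}(M_n+1)/\Ga_{\F_p}(M_1+\dots+M_n-(n-1)p+1)$ is nonzero as well — indeed by \eqref{Ga!} it equals $(-1)^{n-1}M_1!\cdots M_n!/(M_1+\dots+M_n-(n-1)p)!$, and all the factorials appearing are factorials of integers strictly between $0$ and $p$ (again using Lemma \ref{lem am ine}), hence invertible mod $p$. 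Therefore $\det c(x)\ne 0$ in $\F_p$.

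Next I would phrase this linear-algebraically: the map $\F_p^{n-1}\to\Sing V[-2]$ sending $(c_1,\dots,c_{n-1})\mapsto\sum_{l=1}^{n-1}c_l I^{[l]}(x)$ has, in the basis $w_1,\dots,w_{n-1}$ of the target, matrix equal to the transpose of $c(x)$ (its $(j,l)$ entry is $c^l_j(x)$). Since $\det c(x)\ne 0$, this matrix is invertible over $\F_p$, so the map is a bijection of $(n-1)$-dimensional $\F_p$-vector spaces. Existence and uniqueness of $(c_1,\dots,c_{n-1})$ with $w=\sum_{l=1}^{n-1}c_l I^{[l]}(x)$ then follow immediately for every $w\in\Sing V[-2]$.

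I do not expect a serious obstacle here; the only thing that needs care is the bookkeeping that guarantees all exponents $M_i+M_j-p$ and the arguments of the factorials/$\Ga_{\F_p}$ in \eqref{det} lie in the correct ranges so that the right-hand side of \eqref{det}, evaluated at a point with distinct coordinates, is a nonzero element of $\F_p$ rather than something ill-defined or zero. This is precisely where ample reduction is used, via Lemma \ref{lem am ine}: it gives $p<M_i+M_j$ (so the exponents are positive, in particular nonnegative) and $M_i+M_j-p<p$, and more generally $(n-2)p<M_1+\dots+M_n-p+M_n<\dots$, ensuring $0<M_1+\dots+M_n-(n-1)p<p$ so that the denominator factorial $(M_1+\dots+M_n-(n-1)p)!$ is a unit mod $p$. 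Once these inequalities are invoked, the corollary is an immediate consequence of Theorem \ref{thm det} together with the standard fact that a square matrix over a field with nonzero determinant defines a bijective linear map.
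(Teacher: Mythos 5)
Your proof is correct and follows exactly the route the paper intends: Corollary \ref{cor triv} is stated as an immediate consequence of Theorem \ref{thm det}, namely that the explicit formula \eqref{det}, together with the ample-reduction inequalities (Lemma \ref{lem am ine}) guaranteeing the exponents $M_i+M_j-p$ and the factorial arguments lie in $[0,p)$, makes $\det c(x)$ a nonzero element of $\F_p$ at any point $x$ with distinct coordinates, so that $I^{[1]}(x),\dots,I^{[n-1]}(x)$ form a basis of $\Sing V[-2]$. No gaps; only a cosmetic remark: the argument of the denominator factorial satisfies $0\le M_1+\dots+M_n-(n-1)p<p$ (equality to $0$ is not excluded), which is still a unit, so nothing changes.
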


Denote
$
(\F_p^n) ^o =\{x\in\F_p^n\ |\ x\,\on{has\,distinct\,coordinates}\}$.
We have an isomorphism of the two trivial bundles
\bea
\Sing V[-2] \times (\F_p^n) ^o \to (\F_p^n) ^o
\quad  \on{and}\quad
\F_p^{n-1} \times (\F_p^n) ^o \to (\F_p^n) ^o,
\eea
which sends $(w,x)$ to $((c_1,\dots,c_{n-1}),x)$.

\section{Properties of $\F_p$-hypergeometric solutions}
\label{sec last}

In this section we add more properties of $\F_p$-hypergeometric solutions.

\subsection{Uniqueness property}

Given $l=1,\dots, r$, the $\F_p$-hypergeometric solution $I^{[l]}(z)$
 has degree $\sum_{j=1}^n M_j-lp$ and $\id$-leading term
\bean
\label{le ter}
&&
\,(-1)^{M_{i(l)} +1}\frac{\Ga_{\F_p}(M_{i(l)}+1)\,\Ga_{\F_p}\!\left(\sum_{j={i(l)}+1}^nM_j-(l-1)p+1\right)}
{\Ga_{\F_p}\!\left(\sum_{j={i(l)}}^nM_j-lp+1\right)}
\\
\notag
&&
\phantom{aaaaaa}
\times
\left(\frac{f^{({i(l)})}v}{M_{{i(l)}}} - \frac {\sum_{j= {i(l)}+1}^n f^{(j)}v}{\sum_{j= {i(l)}+1}^n M_j}\right) 
z_{1}^{M_{1}}z_{2}^{M_{2}}\dots z_{{{i(l)}-1}}^{M_{{{i(l)}-1}}}
z_{{i(l)}}^{\sum_{j={i(l)}}^n M_{j}\,-\,lp}\,,
\eean
where the number $i(l)$  is defined in Theorem \ref{thm tle}.

\begin{thm}
\label{thm uniq}
If $I(z)$ is a homogeneous polynomial solution of system \eqref{KZ} with $\id$-leading term 
\eqref{le ter}, then $I(z) = I^{[l]}(z)$.

\end{thm}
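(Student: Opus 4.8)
The plan is to prove the theorem by reducing it to the structural results already established for leading terms of polynomial solutions. The key point is that the $\id$-leading term, by Theorem \ref{lem C0} together with Corollary \ref{cor le co}, essentially determines a polynomial solution up to lower-order terms; I will leverage this to show that $I(z) - I^{[l]}(z)$ must vanish.

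First I would observe that if $I(z)$ is a homogeneous polynomial solution of \eqref{KZ} with $\id$-leading term given by \eqref{le ter}, then the difference $D(z) := I(z) - I^{[l]}(z)$ is again a homogeneous polynomial solution of \eqref{KZ}, of the same degree $\sum_{j=1}^n M_j - lp$, but with strictly smaller $\id$-leading monomial (since the leading terms of $I(z)$ and $I^{[l]}(z)$ coincide by Theorem \ref{thm tle}), or $D(z) = 0$. Suppose for contradiction that $D(z) \ne 0$. By Corollary \ref{cor le co}, its $\id$-leading term $C z_1^{d_1}\cdots z_n^{d_n}$ with $C \ne 0$ must satisfy system \eqref{eqn le}, hence by Theorem \ref{lem C0} the exponent vector $(d_1,\dots,d_n)$ is forced into one of the $n-1$ canonical shapes indexed by the position $i$ of the first nonzero coordinate of $C$: namely $d_j = M_j$ for $j<i$, $d_i = \sum_{j=i}^n M_j - lp$ for the appropriate $l$ (using Corollary \ref{cor deg le} to pin down $d_n \equiv 0 \pmod p$ and the degree constraint), and $d_j = 0$ for $j > i$. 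In the ample reduction case, Corollary \ref{cor am i} tells us $i = i(l') = n - l'$ for some $l' \in \{1,\dots,n-1\}$, and the leading monomial of such a solution is exactly $z_1^{M_1}\cdots z_{i-1}^{M_{i-1}} z_i^{\sum_{j\ge i}M_j - l'p}$ possibly multiplied by a monomial in $p$-th powers (coming from the $\F_p[z^p]$-module structure; see the Example at the end of Section \ref{sec lea}).

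The crux will be the comparison of degrees and leading monomials. Since $D(z)$ is homogeneous of degree $\delta_l = \sum_j M_j - lp$, and its leading monomial must be of the form $(\text{monomial in }z^p)\cdot z_1^{M_1}\cdots z_{i-1}^{M_{i-1}} z_i^{\sum_{j\ge i}M_j - l'p}$ for some $i = n - l'$, a degree count forces the $p$-th-power monomial prefactor to have degree $(l' - l)p$; in particular $l' \ge l$. If $l' = l$ then the leading monomial of $D(z)$ equals that of $I^{[l]}(z)$, contradicting that $D$ has strictly smaller leading monomial. If $l' > l$, then $i = n - l' < n - l = i(l)$, so the leading monomial of $D$ is divisible by $z_1^{M_1}\cdots z_{i(l)-1}^{M_{i(l)-1}}$ — but actually a sharper obstruction is needed here: I must rule out the prefactor of $p$-th powers. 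The cleanest route is to invoke that $D(z)$, being a polynomial solution with all the constraints, lies in the module $\mathcal M$ of $\F_p$-hypergeometric solutions (this is exactly the content announced for Section \ref{sec last}: under ample reduction every polynomial solution is $\F_p$-hypergeometric), so $D(z) = \sum_{l''} c_{l''}(z^p) I^{[l'']}(z)$; then comparing $\id$-leading terms using Lemma \ref{lem length} / Corollary \ref{cor am i} and the linear independence of the leading coefficients (Section \ref{sec cor}) forces the leading coefficient of $D$ to disagree with \eqref{le ter} unless $D = 0$.

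The main obstacle I anticipate is the circularity concern: the statement "every polynomial solution is $\F_p$-hypergeometric under ample reduction" is itself a theorem of Section \ref{sec last}, so I should be careful about logical ordering — ideally Theorem \ref{thm uniq} is proved first and used to deduce that result, or vice versa. If the paper proves Theorem \ref{thm uniq} independently, then the honest path is the direct one: use Theorem \ref{lem C0} to enumerate the possible leading terms of $D(z)$, and then a separate divisibility/degree argument (parallel to Lemma \ref{lem div}, using the change of variables \eqref{zu} to read off divisibility by powers of $(z_{j+1}-z_j)$) to show that the only homogeneous solution of degree $\delta_l$ whose leading term is strictly below \eqref{le ter} in the $\id$-ordering is the zero solution. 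Concretely, the leading term data $(C, d)$ from Theorem \ref{lem C0} with $i = n - l'$, $l' > l$, would force $\deg D \ge \sum_{j<i}M_j + (\sum_{j\ge i}M_j - l'p) = \sum_j M_j - l'p < \delta_l$ before accounting for $p$-th power factors, and those factors must then supply degree exactly $(l'-l)p$, contributing a genuine monomial in $z^p$ to the leading term; but the congruence constraint $d_j \equiv 0 \pmod p$ for $j > i$ from \eqref{eqn le} combined with $d_j = 0$ from Theorem \ref{lem C0} (applied to $D$, not to the $z^p$-modified solution) gives the needed contradiction once one tracks that the $z^p$-prefactor cannot be absorbed without violating $d_j = 0$ for $j > i$. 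I would write this out carefully, as it is the one genuinely delicate bookkeeping step.
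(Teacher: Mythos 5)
Your fallback ``direct'' route is essentially the paper's argument: set $D(z)=I(z)-I^{[l]}(z)$, observe it is a homogeneous polynomial solution of degree $\delta_l$ whose $\id$-leading monomial is strictly $<_\id$ the leading monomial of $I^{[l]}(z)$, constrain its leading term by Corollary \ref{cor le co} and Theorem \ref{lem C0}, and derive a contradiction from the congruences, the degree count, and the lexicographic inequality. But your primary route is off target: Theorem \ref{thm uniq} carries no ample-reduction hypothesis, so Corollary \ref{cor am i} (the indexing $i=n-l'$) and the Section \ref{sec last} statement that every polynomial solution is $\F_p$-hypergeometric are simply not available; the paper's proof uses only the defining inequality \eqref{i(l)} of $i(l)$, the congruences, and the ordering $>_\id$, with no ampleness and no appeal to later results.

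More seriously, at the step you yourself flag as delicate --- excluding a $p$-th-power monomial prefactor in the leading monomial of $D(z)$ --- your proposed contradiction does not exist. Theorem \ref{lem C0} constrains exponents only modulo $p$ (the $d_j$ live in $\F_p$), so ``$d_j=0$ for $j>i$'' is perfectly consistent with actual trailing exponents equal to positive multiples of $p$; nothing in \eqref{eqn le} is violated by such a prefactor, and your degree bookkeeping then closes nothing. Indeed the configuration you are trying to exclude actually satisfies every constraint you invoke: if $l+1\le r$, then $z_n^{\,p}\,I^{[l+1]}(z)$ is a homogeneous solution of degree $\delta_{l+1}+p=\delta_l$, and since $i(l+1)<i(l)$ by Lemma \ref{lem length} its $\id$-leading monomial is strictly $<_\id$ that of $I^{[l]}(z)$; hence $I^{[l]}(z)+z_n^{\,p}\,I^{[l+1]}(z)$ has $\id$-leading term exactly \eqref{le ter} while differing from $I^{[l]}(z)$. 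So the case ``$l'>l$ with a $z^p$-prefactor'' cannot be ruled out by the congruence, degree and lexicographic data alone, and any complete argument must use something more (for instance a bound on the degree in each variable, as in the $L$-admissibility of Theorem \ref{thm L-adm}). For comparison, the paper's own proof is terse at precisely this point: in \eqref{le di} the trailing exponents are written as zero rather than merely $\equiv 0\pmod p$, and the dichotomy $k\ge i(l)$ versus $k<i(l)$ is settled only under that tacit assumption. Your diagnosis of where the difficulty sits is therefore correct, but the argument you sketch to resolve it is not valid as stated.
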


\begin{proof}  

 By Theorem \ref{lem C0} the $\id$-leading term of the difference $I(z)-I^{[l]}(z)$
has the form
\bean
\label{le di}
(0, \dots, 0, C_k,\dots,C_n) z_1^{a_1}\dots z_{k-1}^{a_{k-1}}z_k^{a_k}\,,
\eean
for some $k$, where $C_k\ne 0$  and 
\bean
\label{cle di}
&&
a_j\equiv M_j\ (\on{mod} \ p), \quad j=1,\dots,k-1,
\qquad
a_k\equiv \sum_{j=k}^n M_j \ (\on{mod}\ p),
\qquad a_k\not\equiv M_k\,,
\\
\label{cled}
&&
\phantom{aaaaaaaaaaaaaa} 
\sum_{j=1}^ka_j = \sum_{j=1}^n M_j-lp\,,
\\
\label{> id}
&&
\phantom{aaaa}
z_{1}^{M_{1}}z_{2}^{M_{2}}\dots z_{i(l)-1}^{M_{{i(l)-1}}}
z_{i(l)}^{\sum_{j={i(l)}}^n M_{j}\,-\,lp} \ >_\id  \ z_1^{a_1}\dots z_{k-1}^{a_{k-1}}z_k^{a_k}\,.
\eean
The inequality $k\geq i(l)$ is impossible due to \eqref{cle di}, \eqref{i(l)}.
The inequality $k < i(l)$ is impossible due to \eqref{cle di}, \eqref{> id}.
\end{proof}

\subsection{$L$-admissible solutions and filtration on  space of all polynomial solutions}

Let $L=$ $(L_1$,\dots, $L_n)\in \Z^n_{\geq0}$. Let $I(z)$ be a polynomial in $z$ with coefficients in $\F_p^n$.
We say that $I(z)$ is {\it $L$-admissible} if
\bean
\label{L adm}
\frac{\der^{L_j+1}I}{\der z_i^{L_j+1}}(z) = 0,\qquad j=1,\dots,n\,.
\eean
Denote by $\mc M_L$ the $\F_p[z^p]$-module of all $L$-admissible polynomial solutions of \eqref{KZ}. 

\vsk.2>
For example,   $\mc M_{(0,\dots,0)}$  consists of polynomial solutions 
$I(z)=(I_1(z)$, \dots, $I_n(z))$ lying in $(\F_p[z^p])^n$, in other words,
it consists
of all $I(z)$, such that 
\bea
 \sum_{j \ne i}   \frac{\Omega_{ij}}{z_i - z_j}  I(z) =0\,,
\quad i = 1, \dots , n,
\qquad
m_1I_1(z)+\dots+m_nI_{n}(z)=0.
\eea
In particular, if system \eqref{KZ} has constant solutions then they lie in  
 $\mc M_{(0,\dots,0)}$ .

\vsk.2>
The modules $\mc M_L$ form a filtration on the module
 of all polynomial solutions of system \eqref{KZ}. Namely,
if $L=(L_1,\dots, L_n)$, $L'=(L_1',\dots, L_n')$ and $L_j\leq L_j'$ for all $j$, then
$\mc M_L \subset \mc M_{L'}$.

\begin{thm}
\label{thm L-adm}
Let $L=(M_1+1,\dots, M_n+1)$. Then $\mc M_L$ coincides with the module $\mc M$ of  $\F_p$-hypergeometric solutions 
of system \eqref{KZ}.
\end{thm}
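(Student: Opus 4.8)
The plan is to prove the two inclusions $\mc M \subseteq \mc M_L$ and $\mc M_L \subseteq \mc M$ separately, where $L = (M_1+1,\dots,M_n+1)$.

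\textbf{Step 1: $\mc M \subseteq \mc M_L$.} This direction should be immediate from the coefficient formula. By Lemma \ref{lem coef}, each basis solution $I^{[l]}(z)$ has only monomials $z_1^{d_1}\cdots z_n^{d_n}$ with $d_i \le M_i$ for all $i$, so $\der^{M_j+1} I^{[l]}/\der z_j^{M_j+1} = 0$. Since $\mc M$ is spanned over $\F_p[z^p]$ by the $I^{[l]}$ and the operators $\der^{M_j+1}/\der z_j^{M_j+1}$ annihilate $\F_p[z^p]$-multiples appropriately — more precisely, for $c(z)\in\F_p[z^p]$ and $I$ with $z_j$-degree at most $M_j$, the product $c(z)I(z)$ has $z_j$-degree $\equiv \deg_{z_j} I \pmod p$ in each monomial, but one must check the degree does not climb past $M_j$. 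Actually $c(z)$ can raise the $z_j$-degree arbitrarily by multiples of $p$, so $c(z)I^{[l]}(z)$ need \emph{not} be $L$-admissible. Hence $\mc M_L$ is \emph{not} an $\F_p[z^p]$-submodule in the naive sense; I must instead understand $\mc M_L$ as the set of $L$-admissible elements, and show every $L$-admissible solution is an $\F_p$-linear (even $\F_p[z^p]$-linear after truncation) combination. So the real content is the reverse inclusion, and for the forward one I only claim each $I^{[l]}(z)$ itself lies in $\mc M_L$, together with those $\F_p[z^p]$-combinations that happen to remain $L$-admissible; since $\mc M_L$ is declared to be a module, the statement must implicitly use that $\mc M_L$ as defined consists of solutions all of whose monomials have $z_j$-exponent $\le M_j$, hence is a finite-dimensional $\F_p$-space, and "module" is a slight abuse. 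I would clarify this and then the forward inclusion is just Lemma \ref{lem coef}.

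\textbf{Step 2: $\mc M_L \subseteq \mc M$, the main step.} Let $I(z)$ be an $L$-admissible polynomial solution. Decompose $I(z) = \sum_d I_d(z)$ into homogeneous components; each $I_d(z)$ is again an $L$-admissible solution, so I may assume $I(z)$ homogeneous of degree $d$. By Corollary \ref{cor deg sol}, $d \equiv \sum_j M_j \pmod p$, and $L$-admissibility forces $d = \deg I \le \sum_j M_j$ (sum of the maximal exponents), so $d = \sum_j M_j - lp$ for some $l \ge 0$; if $l=0$ then by Corollary \ref{cor deg le} applied to the leading term the only possibility is... I would argue $l \ge 1$ since a nonzero solution of that degree with all exponents $\le M_j$ and total degree $\sum M_j$ would be a constant times $\prod z_j^{M_j}$, which is checked directly not to solve \eqref{KZ} (its $\id$-leading data would violate Theorem \ref{lem C0}, which requires $d_i \ne M_i$ for the pivot index). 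So $1 \le l \le r$. Now induct on the $\id$-leading monomial of $I(z)$ under $>_\id$: by Corollary \ref{cor le co} and Theorem \ref{lem C0}, the $\id$-leading term of $I(z)$ has exactly the shape \eqref{le ter} for some pivot index $i$, and by the degree bookkeeping in Theorem \ref{lem C0} combined with $L$-admissibility ($d_j \le M_j$) this forces $i = i(l)$ and the leading term to equal that of $I^{[l]}(z)$ up to an $\F_p^\times$ scalar $\gamma$. Then $I(z) - \gamma I^{[l]}(z)$ is an $L$-admissible homogeneous solution of the same degree with strictly smaller $\id$-leading monomial; by the same analysis its leading term (if nonzero) again has the form forced by Theorem \ref{lem C0}, but degree considerations now rule out any admissible leading monomial strictly below that of $I^{[l]}$ — this is precisely the argument in the proof of Theorem \ref{thm uniq}, where the inequalities \eqref{cle di}, \eqref{i(l)}, \eqref{> id} are shown to be incompatible. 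Hence $I(z) = \gamma I^{[l]}(z) \in \mc M$.

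\textbf{Main obstacle and cleanup.} The delicate point is the bookkeeping showing that an $L$-admissible homogeneous solution of degree $\sum_j M_j - lp$ cannot have an $\id$-leading monomial lying strictly below that of $I^{[l]}(z)$: one must combine the congruence constraints on the leading exponents coming from Theorem \ref{lem C0} ($a_j \equiv M_j$ for $j<k$, $a_k \equiv \sum_{j\ge k}M_j$, $a_k \not\equiv M_k$) with the admissibility bounds $a_j \le M_j$ and the total-degree constraint $\sum a_j = \sum M_j - lp$, exactly as in Theorem \ref{thm uniq}. The only genuinely new wrinkle versus Theorem \ref{thm uniq} is that there I start from a solution already known to have a specific leading term, whereas here I only know $L$-admissibility; but $L$-admissibility plus Theorem \ref{lem C0} pins down the leading term up to scalar, so the arguments merge. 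I would also handle the non-homogeneous case at the outset by splitting into homogeneous parts and invoking that each part solves \eqref{KZ} (the equations are homogeneous-degree-preserving after clearing denominators), and record that $\mc M \subseteq \mc M_L$ follows from Lemma \ref{lem coef} as in Step 1.
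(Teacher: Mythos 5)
There is a genuine gap, and it originates in your reading of the condition \eqref{L adm}. You treat $L$-admissibility as the degree bound ``every monomial of $I$ has $z_j$-exponent at most $M_j$'', and from this you conclude both that $\mc M_L$ fails to be an $\F_p[z^p]$-module (so that ``module'' is an abuse) and that any homogeneous $I\in\mc M_L$ has degree $\le\sum_j M_j$, hence equals $\gamma I^{[l]}$ for a single $l$. In characteristic $p$ this reading is wrong: $\der_{z_j}^{\,k}\,z_j^{\,d}=d(d-1)\cdots(d-k+1)\,z_j^{\,d-k}$ vanishes in $\F_p$ whenever the residue of $d$ modulo $p$ is small, so \eqref{L adm} constrains only $d_j \bmod p$, not $d_j$ itself; moreover $\der_{z_j}(z_j^p f)=z_j^p\,\der_{z_j}f$, so $L$-admissibility is preserved under multiplication by $\F_p[z^p]$ and $\mc M_L$ is an honest $\F_p[z^p]$-module. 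Concretely, $z_1^{2p}I^{[1]}(z)$ and $I^{[1]}(z)+z_1^{p}I^{[2]}(z)$ are homogeneous elements of $\mc M_L$ of degree exceeding, respectively not of the form handled by, your Step 2: the first violates your bound $d\le\sum_j M_j$, and the second is not a scalar multiple of any single $I^{[l]}$, so your conclusion ``$I=\gamma I^{[l]}$'' cannot be the endpoint of a correct argument. Your appeal to the proof of Theorem \ref{thm uniq} to rule out all smaller leading monomials fails for the same reason: after subtracting $\gamma I^{[l]}$ the remainder may legitimately have leading exponents with nonzero quotients by $p$ (as in $z_1^pI^{[2]}$), which that argument does not exclude.

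The paper's proof fixes exactly this point. For a homogeneous $I\in\mc M_L$ with $\id$-leading term $C\,z_1^{d_1}\cdots z_n^{d_n}$ one writes $d_j=q_jp+r_j$ with $0\le r_j<p$; admissibility bounds the \emph{remainders} $r_j$ by $M_j$, and Theorem \ref{lem C0} together with Corollary \ref{cor deg le} forces $(r_1,\dots,r_n)=(M_1,\dots,M_{i-1},r_i,0,\dots,0)$ with $r_i=\sum_{j=i}^nM_j-lp$ for some $l\ge1$, matching the leading term of $I^{[l]}$ up to a scalar $c$. One then subtracts the $\F_p[z^p]$-multiple $c\,z_1^{q_1p}\cdots z_n^{q_np}I^{[l]}(z)$ (not $c\,I^{[l]}(z)$) and iterates on the strictly decreasing leading monomial, producing an $\F_p[z^p]$-linear combination rather than a scalar multiple. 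Your forward inclusion $\mc M\subseteq\mc M_L$ is fine and, once the observation $\der_{z_j}(z_j^pf)=z_j^p\der_{z_j}f$ is in place, is immediate from Lemma \ref{lem coef} without any of the caveats you raise; but Step 2 needs to be restructured along the lines above, since the degree bound and the reduction to Theorem \ref{thm uniq} do not survive the correct interpretation of \eqref{L adm}.
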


\begin{proof}
Any $\F_p$-hypergeometric solution $I^{[l]}(z)$ lies in $\mc M_L$ by construction. We show that any element  $I(z)\in \mc M_L$
is a linear combination of the $\F_p$-hypergeometric
 solutions with coefficients in $\F_p[z^p]$. Since system \eqref{KZ} is homogeneous, 
 it is enough to assume that $I(z)$ is a homogeneous polynomial.

Assume that $I(z)$ is a homogeneous polynomial.
 Let $(0,\dots,0, C_i,\dots,C_n) z_1^{d_1}\dots z_n^{d_n}$ be the $\id$-leading term of $I(z)$,
 where $1\leq i < n$, $C_i\ne 0$.
Divide each $d_j$ by $p$ with remainder,
\bea
d_j = q_j p+ r_j\,,\qquad 0\leq r_j <p, \qquad j=1,\dots,n\,.
\eea
Then $r_j \leq  M_j$, $j=1,\dots,n$, since $I(z)\in \mc M_L$,
and $(r_1,\dots, r_n)$ has the form 
\\
$(M_1,\dots, M_{i-1}, r_i, 0,\dots,0)$,\, $r_i\ne M_i$,
 by Theorem \ref{lem C0}.  
We have
\bea
 r_i\, =\, \sum_{j=i}^n M_j\, -\, lp\,
\eea
for some positive integer $l$, by Corollary \ref{cor deg le}.

Consider the $\F_p$-hypergeometric solution $I^{[l]}(z)$. By Theorem \ref{lem C0} the 
$\id$-leading term of $I^{[l]}(z)$
is
\bea
(0, \dots, 0, C^l_i,\dots,C_n^l) \,z_1^{M_1}\dots z_{i-1}^{M_{i-1}}z_i^{r_i}\,,
\eea
where 
\bean
\label{CcC}
(0,\dots,0, C_i,\dots,C_n) \,=\, c\, (0, \dots, 0, C^l_i,\dots,C_n^l)\,.
\eean
for some $c \in \F_p^\times$.  Both 
$I(z)$ and  $c z_1^{q_1p}\dots z_n^{q_np} I^{[l]}(z)$ belong to $\mc M_L$ and have the same leading term.
Hence the leading monomial of the difference
$I(z) - c z_1^{q_1p}\dots z_n^{q_np} I^{[l]}(z)$ 
is lexicographically smaller than the leading monomial $z_1^{d_1}\dots z_n^{d_n}$  of $I(z)$.
Notice that the difference is also a homogeneous polynomial.

 Iterating this procedure, which decreases the leading monomial, we present
$I(z)$ as a linear combination of the $\F_p$-hypergeometric
 solutions with coefficients in $\F_p[z^p]$. 
\end{proof}

\subsection{Ample reduction} 

\begin{thm}
\label{thm am co}

Assume that system \eqref{KZ} has ample reduction for a prime $p$. 
Then any polynomial solution $I(z)$ of system \eqref{KZ} belongs to 
the module of $\F_p$-hypergeometric solutions.

\end{thm}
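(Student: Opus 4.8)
The plan is to reduce Theorem \ref{thm am co} to Theorem \ref{thm L-adm}, which already characterizes the module $\mc M$ of $\F_p$-hypergeometric solutions as the module $\mc M_L$ of $L$-admissible polynomial solutions with $L = (M_1+1,\dots,M_n+1)$. Thus it suffices to prove that, under the ampleness assumption, \emph{every} polynomial solution of system \eqref{KZ} is automatically $L$-admissible, i.e. that for any polynomial solution $I(z)$ and any index $i$ one has $\frac{\der^{M_i+1} I}{\der z_i^{M_i+1}} = 0$. Equivalently, the $z_i$-degree of each coordinate of $I(z)$ is at most $M_i$ modulo the $\F_p[z^p]$-structure; more precisely, writing any monomial $z_1^{d_1}\cdots z_n^{d_n}$ appearing in $I(z)$ and reducing each $d_j = q_j p + r_j$ with $0 \le r_j < p$, we must show $r_i \le M_i$ for all $i$.

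First I would reduce to the case of a homogeneous polynomial solution $I(z)$, which is legitimate since system \eqref{KZ} is homogeneous. Then I would argue by downward induction on the $\id$-leading monomial, exactly in the style of the proof of Theorem \ref{thm L-adm}: let $(0,\dots,0,C_i,\dots,C_n)\,z_1^{d_1}\cdots z_n^{d_n}$ be the $\id$-leading term of $I(z)$, with $C_i \ne 0$ and $i$ minimal. The key input is Theorem \ref{lem C0} (via Corollary \ref{cor le co}): writing $d_j = q_j p + r_j$, the reduced exponents $(r_1,\dots,r_n)$ must be of the form $(M_1,\dots,M_{i-1}, r_i, 0,\dots,0)$ with $r_i \ne M_i$, and $r_i = \sum_{j=i}^n M_j - lp$ for some positive integer $l$ by Corollary \ref{cor deg le}. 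The crucial point where ampleness enters is that this integer $l$ lies in the admissible range $1 \le l \le n-1$: indeed $r_i \ge 0$ forces $\sum_{j=i}^n M_j \ge lp$, and by Lemma \ref{lem am ine} (applied to the subset $\{i,\dots,n\}$ of size $n-i+1$) we get $\sum_{j=i}^n M_j < (n-i+1)p$, so $l \le n-i \le n-1$; also $r_i = \sum_{j=i}^n M_j - lp < M_i < p$ is automatic, so $r_i \le M_i$ (indeed $r_i < M_i$). Hence the $\F_p$-hypergeometric solution $I^{[l]}(z)$ exists, and by Theorem \ref{lem C0} its $\id$-leading term has reduced exponents $(M_1,\dots,M_{i-1},r_i,0,\dots,0)$ and leading coefficient proportional to $(0,\dots,0,C_i,\dots,C_n)$ (by the uniqueness of $(d_1,\dots,d_n)$ given $C$ in Corollary \ref{cor number} together with the explicit form in Theorem \ref{lem C0}). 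Subtracting an appropriate $\F_p[z^p]$-multiple $c\,z_1^{q_1 p}\cdots z_n^{q_n p}\, I^{[l]}(z)$ cancels the leading term, lowering the leading monomial, and iterating expresses $I(z)$ as an $\F_p[z^p]$-combination of the $I^{[l]}(z)$.

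Alternatively, and perhaps more cleanly, I would simply show directly that any polynomial solution $I(z)$ lies in $\mc M_L$ for $L=(M_1+1,\dots,M_n+1)$ and then quote Theorem \ref{thm L-adm}. For this, suppose $I(z)$ is homogeneous but \emph{not} $L$-admissible; pick a variable ordering $\si$ and look at the $\si$-leading term. By the $\si$-analog of Corollary \ref{cor le co} and Theorem \ref{lem C0}, the reduced $\si$-leading exponents have the constrained shape, and the same Lemma \ref{lem am ine} estimate shows every reduced exponent is $\le$ the corresponding $M$. Running over all $\si \in S_n$ pins down the reduced exponents of \emph{every} monomial of $I(z)$ (not merely the leading one), since any monomial is $\si$-leading for a suitable $\si$ after the inductive subtraction argument is carried to completion. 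Either way the heart of the matter is the inequality $\sum_{j \in J} M_j < |J|\,p$ from Lemma \ref{lem am ine} combined with the rigidity of leading terms from Theorem \ref{lem C0}.

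The main obstacle I anticipate is bookkeeping rather than conceptual: one must be careful that the inductive subtraction of $\F_p[z^p]$-multiples of the $I^{[l]}(z)$ genuinely terminates and that at each stage the candidate index $l$ produced really falls in $\{1,\dots,n-1\}$ — this is exactly where ampleness is indispensable, since without \eqref{def am} one could have $r_i = \sum_{j=i}^n M_j - lp$ with the required $l$ exceeding $r = [\sum M_j/p]$, so that no $\F_p$-hypergeometric solution with that leading behavior exists (cf. Example \ref{ex1}, where $n=2$, $r=0$, yet a polynomial solution exists that is not $\F_p$-hypergeometric). So the proof amounts to: reduce to homogeneous $I(z)$; apply Theorem \ref{lem C0} to its leading term; use Lemma \ref{lem am ine} to certify $1 \le l \le n-1$; subtract the corresponding $z^{qp}\,I^{[l]}(z)$; induct on the leading monomial; conclude via Theorem \ref{thm L-adm} (or directly).
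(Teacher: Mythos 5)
Your proposal is correct and follows essentially the same route as the paper: reduce to a homogeneous solution, apply Corollary \ref{cor le co} and Theorem \ref{lem C0} to its $\id$-leading term, use ampleness to produce an $\F_p$-hypergeometric solution $I^{[l]}(z)$ whose $\id$-leading term matches the given one up to a constant times a monomial in $z_1^p,\dots,z_n^p$, subtract, and induct on the leading monomial. Your invocation of Lemma \ref{lem am ine} to certify that the required index satisfies $1\le l\le n-1$ (so that $I^{[l]}(z)$ actually exists) merely makes explicit what the paper compresses into the sentence ``the ampleness of the reduction implies there exists an $\F_p$-hypergeometric solution\dots''.
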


\begin{proof} System \eqref{KZ} is homogeneous. Hence  it is enough to prove 
the theorem assuming that $I(z)$ is a homogeneous polynomial solution.

Let $I_\id(z)=C z_1^{d_1}\dots z_n^{d_n}$ be the $\id$-leading term of $I(z)$. 
By Corollary \ref{cor le co} the $\id$-leading term has the form described in Theorem
\ref{lem C0}.  The ampleness of the reduction implies there
 exists an $\F_p$-hypergeometric solution
$I^{[l]}(z),$ whose $\id$-leading $I^{[l]}_\id(z)$ has the property
\bea
I_\id(z)=c z_1^{a_1}\dots z_n^{a_n} I^{[l]}_\id(z)\,,
\eea
where $c\in \F_p$ and $a_1,\dots,a_n\in p\Z_{\geq 0}$.  The difference 
$I(z)\,- \,c z_1^{a_1}\dots z_n^{a_n} I^{[l]}(z)$ is a homogeneous polynomial solution of system \eqref{KZ}
with $\id$-leading term  $>_\id$-smaller  than the $\id$-leading term of $I(z)$.
Iteration of this procedure  implies the theorem.
\end{proof}

\bigskip

\end{document}